\theoremstyle{plain}
\newtheorem{theorem}{Theorem}[section]
\newtheorem{proposition}[theorem]{Proposition}
\newtheorem{lemma}[theorem]{Lemma}
\theoremstyle{definition}
\newtheorem{definition}[theorem]{Definition}
\theoremstyle{remark}
\newtheorem*{remark}{Remark}
\numberwithin{equation}{section}
\newcommand{\sthree}{{S}^3}
\newcommand{\rfour}{\mathbb{R}^4}
\newcommand{\Ker}{\operatorname{Ker}}
\newcommand{\slk}{\operatorname{slk}}
\newcommand{\lsh}{\textsc{ls}-homotopy}
\newcommand{\lshc}{\textsc{ls}-homotopic}
\newcommand{\sone}{{S}^1}
\newcommand{\rthree}{\mathbb{R}^3}
\newcommand{\rtwo}{\mathbb{R}^2}
\newcommand{\rfive}{\mathbb{R}^5}
\newcommand{\sfour}{S^4}
\newcommand{\R}{\mathbb{R}}
\newcommand{\Z}{\mathbb{Z}}
\newcommand{\rank}{\operatorname{rank}}
\newcommand{\hv}{\llbracket}
\newcommand{\hh}{\rrbracket}
\begin{document}
\title[Locally stable maps $\sthree\to\rfour$]{A geometric classification of the path components of the space of locally stable maps $\sthree\to\rfour$}
\author{Ole Andersson}
\thanks{I would like to thank my adviser Tobias Ekholm for suggesting 
the problem studied in this paper and for many 
fruitful discussions.}
\address{Department of Mathematics, Uppsala University, SE-751 06 Uppsala, Sweden}
\email{Ole.Andersson@math.uu.se}

\begin{abstract}
Locally stable maps $\sthree\to\rfour$ are classified up to
homotopy through locally stable maps. The equivalence class of a
map $f$ is determined by three invariants: the isotopy class
$\sigma(f)$ of its framed singularity link, the generalized normal
degree $\nu(f)$, and the algebraic number of cusps $\kappa(f)$ of
any extension of $f$ to a locally stable map of the $4$-disk into
$\rfive$. Relations between the invariants are described, and it
is proved that for any $\sigma$, $\nu$, and $\kappa$ which satisfy
these relations, there exists a map $f:\sthree\to\rfour$ with
$\sigma(f)=\sigma$, $\nu(f)=\nu$, and $\kappa(f)=\kappa$. It
follows in particular that every framed link in $\sthree$ is the
singularity set of some locally stable map into $\rfour$.
\end{abstract}

\keywords{Stable map, Singularity, Whitney umbrella, Cusp, Framed link, Normal degree, Immersion, Regular homotopy}
\maketitle
\section{Introduction}
\label{introduction}
This paper concerns locally stable codimension one maps from 
spheres into Euclidean spaces. Locally stable maps are maps with
stable map germs, see Section \ref{background}. In the nice
dimensions of Mather \cite{Mat}, in particular in the case of
codimension one maps of spheres of dimensions less than $15$,
locally stable maps constitute an open and dense subset of
$C^\infty(S^k,\R^n)$, the space of smooth maps $S^k\to\R^n$.
A homotopy through locally stable maps will be called an
\lsh.

Every locally stable map $S^1\to\R^2$ is an immersion, and the
classification of such maps up to \lsh\ reduces to
the well-known classification of immersed plane curves up to
regular homotopy, see Whitney \cite{Whi}. In the case of maps
$S^2\to\R^3$, a locally stable map need not be an immersion. It
may have a finite number of singularities, where the rank of the
differential equals $1$ (so called Whitney umbrellas). Juh\'asz
\cite{Juh} proved that two singular locally stable maps from a closed
connected surface into $\rthree$ are \lshc\ if and only if they
have the same number of singularities. The results of \cite{Juh} have 
been generalized by Juh\'asz \cite{Juh1} to locally stable maps from any closed
smooth $n$-manifold to $\R^{2n-1}$ when $n\ne 3$.

In the present paper we classify locally stable maps $S^3\to\R^4$
up to \lsh. The classification is more refined than the
corresponding classification for maps $S^k\to\R^{k+1}$ when $k\leq
2$. It involves an extension of geometrically defined regular
homotopy invariants of immersions $\sthree\to\rfour$ as well as
link theory in $\sthree$.

A locally stable map $f\colon S^3\to\R^4$ is an immersion outside
a closed $1$-dimensional submanifold $\Sigma(f)$ of $S^3$. Along
$\Sigma(f)$ the rank of the differential $df$ of $f$ equals $2$.
(In fact the restriction of $f$ to a neighborhood of $\Sigma(f)$
can be described as a $1$-parameter family of Whitney umbrellas,
see Section \ref{background}.) The kernel bundle of $df$ is a
trivial line bundle along $\Sigma(f)$ which is nowhere tangential
to $\Sigma(f)$, see Proposition \ref{fredrik}. Hence $\Sigma(f)$
is a framed link in $\sthree$.

Using an idea inspired by a construction in Goryunov \cite{Gor},
we extend the definition of the normal degree for immersions
$S^3\to\R^4$ so that it applies also to locally stable maps
$S^3\to\R^4$, see Definition \ref{def_of_nu}. Following Ekholm and
Sz{\H u}cs \cite{EkSz}, we consider the algebraic number of cusps
($\Sigma^{1,1}$-points) of a locally stable map $(D^4,\partial
D^4)\to(\R^5,\R^4)$ which extends a given locally stable map
$\partial D^4=S^3\to\R^4$ and which agrees with the product
extension in some collar neighborhood of $S^3$ in $D^4$, see
Definition \ref{def_of_kappa}.

To state the classification theorem we introduce the following
notation. For a locally stable map $f\colon S^3\to\R^4$ let
$\sigma(f)$ be the isotopy class of the framed singularity link
$\Sigma(f)$ of $f$ (if $f$ is an immersion let $\sigma(f)$ be the
class of the empty link), let $\nu(f)$ be the (generalized) normal
degree of $f$, and let $\kappa(f)$ be the algebraic number of
cusps of an extension of $f$ to the disk as discussed above.

\begin{theorem}
\label{ett}
Two locally stable maps $f,g:\sthree \to \rfour$ are homotopic
through locally stable maps if and only if $\sigma(f)=\sigma(g)$,
$\nu(f)=\nu(g)$, and $\kappa(f)=\kappa(g)$.
\end{theorem}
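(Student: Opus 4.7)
The plan is to prove necessity by verifying that each of $\sigma$, $\nu$, $\kappa$ is invariant under \lshs, and then to prove sufficiency in two stages: first arrange that $f$ and $g$ agree on a tubular neighborhood of a common singularity link, and then apply immersion theory on the complement to realize a regular homotopy between the resulting immersions.

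For necessity, I would consider an \lsh\ $H\colon \sthree\times I\to \rfour$, viewed as a map whose total singular locus $\Sigma(H)\subset \sthree\times I$ meets each slice $\sthree\times\{t\}$ in $\Sigma(H_t)$. Local stability of each slice forbids tangencies between $\Sigma(H)$ and the slices, since such a tangency would correspond to a singularity more degenerate than the $1$-parameter families of Whitney umbrellas permitted in the locally stable case. Hence $\Sigma(H)$ is a disjoint union of annuli whose framings, inherited from the kernel bundle of $dH$, vary continuously; this gives a framed isotopy from $\Sigma(H_0)$ to $\Sigma(H_1)$, proving invariance of $\sigma$. Invariance of $\nu$ follows from the integer-valuedness of the generalized normal degree together with its continuity on the space of locally stable maps. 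For $\kappa$, glueing disk extensions of $f$ and $g$ along the cylinder over $H$ yields a cobordism of maps $D^4\to\rfive$ through which $\Sigma^{1,1}$-points can only be born or die in canceling pairs, so $\kappa(f)=\kappa(g)$.

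For sufficiency, assume the three invariants coincide. First, realize $\sigma(f)=\sigma(g)$ by a framed-link isotopy, lift it to an ambient isotopy of $\sthree$, and precompose with $f$, so that $\Sigma(f)=\Sigma(g)$ as framed links. Using the parametric Whitney umbrella normal form of Proposition \ref{fredrik}, I may further assume, after an additional \lsh, that $f=g$ on a tubular neighborhood $N$ of this common singularity link. Outside $N$, both maps are immersions of the compact $3$-manifold $M=\overline{\sthree\setminus N}$ into $\rfour$ that coincide on $\partial M$. By Smale--Hirsch, regular homotopy classes rel $\partial M$ of such immersions form a torsor over a group whose free rank matches $\pi_3(\mathrm{SO}(4))\cong\Z\oplus\Z$. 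I would then verify that these two $\Z$-summands are detected precisely by the difference of normal degrees and the difference of algebraic cusp counts; since both vanish by hypothesis, $f|_M$ and $g|_M$ are regularly homotopic rel $\partial M$, and the extension of this regular homotopy by the identity on $N$ is the desired \lsh\ from $f$ to $g$.

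The main obstacle will be this last identification: showing that the second Smale--Hirsch $\Z$-invariant, beyond the normal degree, is detected exactly by the algebraic cusp count of a locally stable extension $D^4\to\rfive$. This is the technical heart of the argument and requires a precise bridge between the obstruction theory governing immersions $M\to\rfour$ rel boundary and the $\Sigma^{1,1}$-singularities appearing in such extensions, in the spirit of Ekholm--Sz\"ucs \cite{EkSz}. A secondary technical point is carrying out the local matching near $\Sigma(f)=\Sigma(g)$ entirely within the class of locally stable maps, for which a parametric normal form combined with a careful interpolation argument will be needed.
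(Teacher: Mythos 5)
Your overall architecture for sufficiency (match $f$ and $g$ near a common framed singularity link, then do relative Smale--Hirsch theory on the complement) is the same as the paper's, but as stated it has a genuine gap at the obstruction-theoretic stage. For immersions of $M=\overline{\sthree\setminus N}$ into $\rfour$ agreeing near $\partial M$, the relative obstructions to a regular homotopy rel $\partial M$ live in $H^k(M,\partial M;\pi_k(SO(4)))$; besides the $k=3$ term there is the $k=1$ term $H^1(M,\partial M;\Z_2)\cong H_2(M;\Z_2)\cong\Z_2^{\,d}$ when $\Sigma$ has $d+1$ components. These $\Z_2$-differences are invisible to $\nu$ and $\kappa$, so equality of the three invariants does \emph{not} force $f|_M$ and $g|_M$ to be regularly homotopic rel $\partial M$, and your plan would fail exactly here for links with several components. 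The paper's Proposition \ref{f_lshc_gsumk} handles this by first changing $f$ through \lshs\ (composition with the ambient twisting isotopies $\psi^j_t$ of $\rfour$ along the $1$-handles joining distinct link components) to kill the $\pi_1$-level differences, while for loops based in a single component the $\Z_2$-class is shown (as in Proposition \ref{aina}) to depend only on the $\Z_2$-homology class and hence to match automatically. Similarly, your claim that one can make $f=g$ on a neighborhood of $\Sigma$ ``after an additional \lsh'' is not a routine interpolation: Lemma \ref{tomas} must prove that the target frames along $f(\Sigma)$ and $g(\Sigma)$ are homotopic, again via a homology-class computation; and the relevant normal form is Proposition \ref{sture}, not Proposition \ref{fredrik}.

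Second, the step you yourself flag as the ``main obstacle'' --- detecting the remaining $\pi_3(SO(4))\cong\Z\oplus\Z$ obstruction by the differences of $\nu$ and $\kappa$ --- is left unproved, and the paper never proves such a direct relative identification. Instead it absorbs the top obstruction by replacing $g$ with a connected sum $g\natural k$ with an immersion $k$ of prescribed Smale invariant (Proposition \ref{f_lshc_gsumk}), and only then uses additivity, $\nu(g\natural k)=\nu(g)+\nu(k)-1$ and $\kappa(g\natural k)=\kappa(g)+\kappa(k)$ (Propositions \ref{lsnu} and \ref{lskappa}), together with the fact that $\nu$ and $\kappa$ classify immersions up to regular homotopy (Lemmas \ref{immnormal}, \ref{omega}, \ref{immcusp}; Proposition \ref{nu_kappa_det_imm}), to conclude that $k$ is standard. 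You would need to supply that bridge, or adopt the connected-sum detour, for your argument to close. A smaller point on necessity: invariance of $\nu$ is not just ``integrality plus continuity'' --- since $\nu$ is defined via the capped-off closure of the Gauss map, its local constancy is exactly what must be proved, and the paper does so by an explicit chain-level cobordism over the track of the homotopy (Proposition \ref{knocke}).
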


It follows from Theorem \ref{ett} that two immersions
$f,g:\sthree\to\rfour$ are regularly homotopic if and only if
$\nu(f)=\nu(g)$ and $\kappa(f)=\kappa(g)$. This result can also be
derived from \cite{Ekh2} or \cite{Hug} together with \cite{EkSz}.
When restricted to immersions, the invariant $\kappa$ takes values
in $2\Z$, see \cite{EkSz}. Furthermore, if $f:\sthree\to\rfour$ is
an immersion, then $\nu(f)$ and $\frac{1}{2}\kappa(f)$ have
different parity, see Proposition \ref{immpar}.

\begin{theorem}
\label{tva}
Let $\sigma$ be an isotopy class of framed links in $S^3$. Then
there exists a locally stable map $f:S^3\to\R^4$ such that
$\sigma(f)=\sigma$. Moreover, for any locally stable map
$g:S^3\to\R^4$ with $\sigma(g)=\sigma$ there exists an immersion
$h:\sthree\to\rfour$ such that $g$ is homotopic through locally
stable maps to the connected sum $f\natural h$. Hence there are
integers $a$ and $b$ such that
\begin{equation}
\label{eqrest}
\nu(g)=\nu(f)+a\quad\text{and}\quad\kappa(g)=\kappa(f)+2a+4b.
\end{equation}
Conversely, if $a$ and $b$ are any integers, then there exists a
locally stable map $g:S^3\to\R^4$ that satisfies
$\sigma(g)=\sigma$ and Equations \eqref{eqrest}.
\end{theorem}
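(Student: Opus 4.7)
The plan is to establish the four assertions of the theorem in sequence: (i) existence of an $f$ realizing $\sigma$; (ii) the claim that any $g$ with $\sigma(g)=\sigma$ is \lshc\ to $f\natural h$ for some immersion $h$; (iii) the identities \eqref{eqrest} as a corollary; and (iv) realization of arbitrary integers $a$ and $b$. Throughout I will use Theorem \ref{ett} as a recognition principle and the additivity of the invariants under connected sum.

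For (i), I would pick a framed link $L \subset \sthree$ representing $\sigma$ and build $f$ from local models. On a tubular neighborhood $N(L)\cong L\times D^2$ trivialized by the framing, I would place the standard $1$-parameter family of Whitney umbrellas described in Section \ref{background}, with the core immersed in $\rfour$ in a way that produces $\Sigma(f)=L$ with the prescribed framing. On the complement $\sthree\setminus N(L)$ the map $f$ would then be extended to an immersion with the given boundary values; such an extension exists by the Smale--Hirsch h-principle, using that $3$-manifolds are parallelizable and the codimension-one tangential obstruction is trivial.

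For (ii), I would first arrange by an ambient isotopy of $\sthree$, lifted to an \lsh\ of $g$, that $\Sigma(g)=L$ as a framed link in $\sthree$. A relative parametric normal-form theorem for $1$-parameter families of Whitney-umbrella germs over a framed core curve should then allow a further \lsh\ of $g$ making it coincide with $f$ on a tubular neighborhood $N(L)$. Outside $N(L)$, both $f$ and $g$ are immersions with matching boundary data; concatenating $g|_{\sthree\setminus N(L)}$ with a reflected copy of $f|_{\sthree\setminus N(L)}$ along this common boundary yields an immersion $h:\sthree\to\rfour$, and running the construction backwards exhibits $g$ as \lshc\ to $f\natural h$. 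The relations \eqref{eqrest} then follow from additivity of $\nu$ and $\kappa$ under connected sum together with the parity constraint $\nu(h)+\tfrac{1}{2}\kappa(h)\equiv 1\pmod{2}$ for immersions: writing $a=\nu(g)-\nu(f)$ and $b=\tfrac{1}{4}(\kappa(g)-\kappa(f)-2a)$, the parity constraint forces $b$ to be an integer. For (iv), the Smale--Hirsch classification of immersions $\sthree\to\rfour$ produces an immersion $h$ realizing any $(\nu(h),\kappa(h))$ consistent with the parity constraint; choosing $h$ to hit the values corresponding to a given $(a,b)$ and setting $g=f\natural h$ then completes the realization.

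The main obstacle will be step (ii), specifically the local rigidity claim that two locally stable maps with the same framed singular link can be made to coincide near the link after an \lsh. This requires a relative, parametric version of Mather's stability theorem for Whitney-umbrella germs along the link, which I expect to derive from the local results in Section \ref{background} combined with an isotopy-extension argument. Once the normal form is in hand, the remainder of the argument is bookkeeping via Theorem \ref{ett} and the standard connected-sum formulas.
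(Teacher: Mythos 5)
Your overall skeleton (realize $\sigma$ by a map $f$; show any $g$ with $\sigma(g)=\sigma$ is \lshc\ to $f\natural h$ for an immersion $h$; then do the bookkeeping with the connected-sum formulas and the parity constraint) is the same as the paper's, and your parts (iii) and (iv) are essentially correct. But both substantive steps have genuine gaps. In step (i), the extension of the local model over the link complement is \emph{not} obstruction-free: after fixing a trivialization $Q$ of $T\sthree$, the formal data is a map to $GL^+(4)$, and since $\pi_1(GL^+(4))=\Z_2$ there is a mod-$2$ obstruction on loops in $\sthree\setminus L$. The paper has to compute that the value of this obstruction on meridians is nontrivial, show that its value on any loop depends only on the $\Z_2$-homology class, and then, for each component of $L$, choose between two inequivalent local models (the standard normal form versus one twisted by a fiberwise rotation of the target tube) so that the values on longitudes are compatible with $H_1(\sthree\setminus L;\Z_2)$; only then does the relative h-principle apply. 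Your appeal to ``parallelizability, so the tangential obstruction is trivial'' skips exactly this point and is false as stated.

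In step (ii) there are two problems. First, making $g$ agree with $f$ near the link is not just a parametric normal-form plus isotopy extension: one must also match the normal framings of the \emph{image} curve in $\rfour$, and the paper proves this is possible by showing the relevant loop in $GL^+(4)$ is determined by the $\Z_2$-homology class of a longitude together with the source framing --- a global argument you flag but do not supply. Second, and more seriously, your construction of $h$ fails: gluing $g|_{\sthree\setminus N(L)}$ to a reflected copy of $f|_{\sthree\setminus N(L)}$ along their common boundary produces a map of the \emph{double} of the link complement, which is not $S^3$ (already for the unknot the double is $S^2\times S^1$), so no immersion $h:\sthree\to\rfour$ arises this way, and ``running the construction backwards'' to exhibit $g\simeq f\natural h$ has no justification. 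In the paper, $h$ appears for a different reason: after arranging $f=g$ near $\Sigma$, one tries to extend a relative homotopy of the formal data over a handle decomposition of the complement; the $1$-handle obstructions are killed using the $\Z_2$-homology invariance plus ambient rotations along arcs joining distinct link components, the $2$-handle obstructions vanish since $\pi_2(GL^+(4))=0$, and the final obstruction over the $3$-handle lies in $\pi_3(SO(4))\cong\Z\oplus\Z$ and is absorbed precisely by forming the connected sum with an immersion $h$ of the appropriate Smale invariant. Without that obstruction-theoretic analysis, your step (ii) does not go through.
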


Equations \eqref{eqrest} imply that the parity of $\kappa(g)$ and the residue
class of $2\nu(g)+\kappa(g)$ modulo 4 depend only on $\sigma$. We
show that the parity equals the sum of the modulo 2 self linking
numbers of all components of the framed link $\Sigma(g)$, see
Proposition \ref{slk}. On immersions the residue class equals 2
modulo 4, see Proposition \ref{immpar}. The behavior of the
residue class on locally stable maps with nonempty singularity
sets will be studied in a forthcoming paper.

The paper is organized as follows. Section \ref{background}
contains a short background to stable maps and singularities.
Section \ref{invariants} discusses the invariants $\nu$ and
$\kappa$. In Section \ref{realization} we prove that every framed
link in $\sthree$ can be realized as a framed singularity link of
a locally stable map into $\rfour$. Sections \ref{sufficiency} and
\ref{applications} are devoted to the proofs of Theorems \ref{ett}
and \ref{tva}. 

\section{Stable maps and singularities}
\label{background}
This section contains background material from the theory of
stable maps. In Subsections \ref{St_o_Loc} and \ref{sing} we
introduce the concepts of stability and local stability of maps,
and review the Thom-Boardman theory of singularities. For
convenience most definitions are stated in greater generality than
later needed. In Subsections \ref{ulla} and \ref{totto} we focus on
stable and locally stable maps $\sthree\to\rfour$, and give a
detailed description of their properties.
In particular we define the invariant $\sigma$
mentioned in Section \ref{introduction}. When not included, proofs
of the statements in Subsections \ref{St_o_Loc}--\ref{totto} can
be found in \cite{GoGu}. In Subsection \ref{orient} we fix
metric and orientation conventions.

\subsection{Stable and locally stable maps}
\label{St_o_Loc}
Let $M$ and $N$ be smooth manifolds, with $M$ compact, and denote
by $C^\infty(M,N)$ the space of smooth maps $M\to N$ equipped with
the Whitney $C^\infty$ topology. We assume, unless otherwise is
explicitly mentioned, all manifolds and maps to be smooth.

Recall that a map $f:M \to N$ is \emph{stable} if each map in a
neighborhood of $f$ in $C^\infty(M,N)$ is a composition $\psi
\circ f \circ \varphi$ with $\varphi:M \to M$ and $\psi:N \to N$
being diffeomorphisms. Stable maps form an open subset of
$C^\infty(M,N)$, and in the nice dimensions of Mather \cite{Mat}
the set of stable maps is dense.

Mather \cite{Mat2} proved that a map $f:M\to N$ is stable if and
only if $f$ is \emph{infinitesimally stable}, that is, if for each
vector field $w$ along $f$ there are vector fields $u$ on $M$ and
$v$ on $N$ such that
\begin{equation}
\label{eq1}
w = df\circ u + v\circ f.
\end{equation}
We call a map $f:M\to N$ \emph{locally stable} if its map germs
are infinitesimally stable. That is, if for each $p\in M$ and each
germ of a vector field $w$ along $f$ at $p$ there are germs of
vector fields $u$ on $M$ at $p$ and $v$ on $N$ at $f(p)$ such that
Equation \eqref{eq1} is satisfied in a neighborhood of $p$. By
\cite{Mat2} every stable map is locally stable.

A homotopy $h_t:M \rightarrow N$ will be called an
\emph{\textsc{ls}-homotopy} if each $h_t$ is locally stable.
{\sc{Ls}}-homotopies correspond to continuous paths in the space
of locally stable maps, and locally stable maps $f,g:M\rightarrow
N$ are \emph{\lshc}\ if there exists an \lsh\ from $f$ to $g$.

\subsection{Singularities}
\label{sing}
A point $p\in M$ is a \emph{singularity} of a map $f:M \rightarrow
N$ if the differential $df$ of $f$ has positive corank at $p$:
\begin{equation*}
\rank df(p)<\min\{\dim M,\dim N\}.
\end{equation*}
We call $f$ \emph{singular} if the set of singularities of $f$ is nonempty.

The singularities of a map $f:M\to N$ are classified
according to the corank of $df$ and the coranks of the
differentials of restrictions of $f$ to submanifolds of
singularities. For each integer $r$ let $\Sigma^r(f)$ be the set
of points in $M$ at which $df$ has corank $r$. If $\Sigma^r(f)$ is
a nonempty manifold consider the restriction $f|_{\Sigma^r(f)}$
and for each integer $s$ define $\Sigma^{r,s}(f)$ to be the set of
points in $\Sigma^r(f)$ at which the differential
$d(f|_{\Sigma^r(f)})$ has corank $s$. Analogously define for every
sequence of integers $r_1,\dots,r_k$ the set
$\Sigma^{r_1,\dots,r_k}(f)$ to be the set
$\Sigma^{r_k}(f|_{\Sigma^{r_1,\dots,r_{k-1}}(f)})$ provided that
$\Sigma^{r_1,\dots,r_{k-1}}(f)$ is defined and is a nonempty
submanifold of $M$. Boardman \cite{Boa} associated to every
sequence of integers $r_1\geq\dots\geq r_k\geq 0$ a fiber
subbundle $\Sigma^{r_1,\dots,r_k}$ of the jet bundle $J^k(M,N)$
and proved for the maps $f$ in a residual subset of
$C^\infty(M,N)$ that each jet extension $j^kf$ is transverse to
every $\Sigma^{r_1,\dots,r_k}$, and that
$\Sigma^{r_1,\dots,r_k}(f)=(j^kf)^{-1}(\Sigma^{r_1,\dots,r_k})$.

The jet extensions of a locally stable map are transverse to every
$\Sigma^{r_1,\dots,r_k}$. Therefore the set of singularities of a
locally stable map $f$ is the union of the manifolds
$\Sigma^{r_1,\dots,r_k}(f)$ with $r_1$ positive.

If $\dim M\leq \dim N$, then the restriction of a stable map $f:M\rightarrow N$ to
each nonempty manifold $\Sigma^{r_1,\dots,r_k}(f)$ with $r_k=0$ is
an immersion with normal crossings. Hence, if $\Sigma$ is
shorthand for $\Sigma^{r_1,\dots,r_k}(f)$, then for each $q\in N$
and each set of distinct points $p_1,\dots,p_n\in
f^{-1}(q)\cap\Sigma$ the linear subspaces
$df(T_{p_1}\Sigma),\dots,df(T_{p_n}\Sigma)$ of $T_qN$ are in
general position:
\begin{equation}
\label{gen_pos}
\text{codim}\big(\bigcap_{j=1}^{n}
df(T_{p_j}\Sigma)\big)=\sum_{j=1}^{n}\text{codim}\big(df(T_{p_j}\Sigma)\big).
\end{equation}
In particular, $f|_\Sigma$ has transverse self intersections, and
the cardinality of the preimage of any point in $N$ under
$f|_\Sigma$ is bounded from above by $\dim N/(\dim N - \dim
\Sigma)$.

\subsection{Stable and locally stable maps from $\sthree$ to $\rfour$}
\label{ulla}
Locally stable maps constitute an open and dense subset of
$C^\infty(\sthree,\rfour)$. The singularities of a singular
locally stable map $f:\sthree \rightarrow \rfour$ are all of type
$\Sigma^{1,0}$. The set $\Sigma^{1,0}(f)$ is a closed
1-dimensional submanifold of $\sthree$. From now on we denote
$\Sigma^{1,0}(f)$ by $\Sigma(f)$ and call $\Sigma(f)$ \emph{the
link of singularities of $f$}.

\begin{proposition}
\label{martin}
Let $h_t:\sthree\to\rfour$ be an \lsh. Then the links
$\Sigma(h_0)$ and $\Sigma(h_1)$ are isotopic.
\end{proposition}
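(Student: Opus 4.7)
The plan is to assemble the singularity sets $\Sigma(h_t)$ into a single smooth $2$-dimensional submanifold $W$ of $S^3\times[0,1]$ and to trivialize its projection to $[0,1]$ via Ehresmann's theorem, thereby producing the desired ambient isotopy. By a standard approximation argument (using that local stability is an open condition) I may assume the homotopy is smooth as a joint map $S^3\times[0,1]\to\R^4$, so that $J\colon S^3\times[0,1]\to J^1(S^3,\R^4)$, $J(x,t)=j^1h_t(x)$, is smooth.

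First I would verify that $J$ is transverse to the Boardman stratum $\Sigma^1$. Since each $h_t$ is locally stable, every slice-map $J(\,\cdot\,,t)=j^1h_t$ is transverse to $\Sigma^1$; because $dJ_{(x,t)}(T_xS^3\oplus\{0\})=d(j^1h_t)_x(T_xS^3)$, this slice-wise transversality already forces $J$ itself to be transverse. As the singularities of a locally stable map $S^3\to\R^4$ are exclusively of type $\Sigma^{1,0}$, the preimage $W=J^{-1}(\Sigma^1)=J^{-1}(\Sigma^{1,0})$ is a compact smooth $2$-submanifold of $S^3\times[0,1]$, with $W\cap(S^3\times\{t\})=\Sigma(h_t)$ for every $t$ and $\partial W=\Sigma(h_0)\sqcup\Sigma(h_1)$.

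Next I would check that the projection $\pi\colon W\to[0,1]$ is a submersion. At any $(x,t)\in W$ the tangent space $T_{(x,t)}W$ is two-dimensional and contains the one-dimensional subspace $T_x\Sigma(h_t)\subset T_xS^3\times\{0\}$, so $d\pi_{(x,t)}$ must be surjective. Compactness of $W$ together with Ehresmann's fibration theorem then yield a trivialization $W\cong\Sigma(h_0)\times[0,1]$ over $[0,1]$; a standard isotopy-extension argument promotes this to an ambient isotopy of $S^3$ carrying $\Sigma(h_0)$ onto $\Sigma(h_1)$. For the framings, the kernel line bundles $\ker dh_t|_{\Sigma(h_t)}$ assemble into a smooth sub-bundle of the pull-back of $TS^3$ to $W$ which, by Proposition \ref{fredrik}, is nowhere tangent to the fibres of $\pi$; transporting it along the trivialization identifies the framing of $\Sigma(h_0)$ with that of $\Sigma(h_1)$.

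The delicate step is the transversality argument, where one must use both that $\Sigma^{1,1}$ is empty for locally stable maps $S^3\to\R^4$ (so that no higher Thom--Boardman strata can appear in $W$) and that local stability is preserved under small perturbations (to justify the initial smoothing of the homotopy). Once these points are secured, the rest is the familiar Ehresmann / isotopy-extension mechanism.
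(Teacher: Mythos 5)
Your strategy is in substance the same as the paper's: the paper simply applies the parametric transversality isotopy theorem of Abraham and Robbin (\cite[Theorem 20.2]{AbRo}) to the homotopy $j^1h_t$ and the closure of $\Sigma^1\subset J^1(\sthree,\rfour)$, and your argument amounts to reproving that theorem by hand (slice-wise transversality, Ehresmann, isotopy extension), after the legitimate preliminary step of smoothing the path of locally stable maps using openness of local stability. Two steps, however, are not justified as written.

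First, the submersion step is a non sequitur: knowing that the two-dimensional space $T_{(x,t)}W$ \emph{contains} the horizontal line $T_x\Sigma(h_t)\times\{0\}$ does not prevent $T_{(x,t)}W$ from lying entirely inside $T_x\sthree\times\{0\}=\ker d\pi_{(x,t)}$. What you need, and what slice-wise transversality actually gives, is the equality $T_{(x,t)}W\cap\bigl(T_x\sthree\times\{0\}\bigr)=T_x\Sigma(h_t)\times\{0\}$: since $T_{(x,t)}W=(dJ_{(x,t)})^{-1}\bigl(T_{J(x,t)}\Sigma^1\bigr)$, a horizontal vector $(v,0)$ lies in $T_{(x,t)}W$ precisely when $d(j^1h_t)_x(v)\in T_{J(x,t)}\Sigma^1$, that is, when $v\in T_x\Sigma(h_t)$. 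This intersection is one-dimensional while $\dim T_{(x,t)}W=2$, so the tangent space cannot be horizontal and $d\pi$ is surjective. Second, the closure issue you gesture at is not about $\Sigma^{1,1}$, which is not a stratum of $J^1(\sthree,\rfour)$ at all: what makes $W=J^{-1}(\Sigma^1)$ closed, hence compact, is that the image of $J$ misses $\overline{\Sigma^1}\setminus\Sigma^1=\bigcup_{r\geq 2}\Sigma^r$; this holds because each $h_t$ is locally stable, so $j^1h_t$ is transverse to the codimension-six stratum $\Sigma^2$, which for a three-dimensional source means avoidance, i.e.\ $dh_t$ has corank at most one everywhere. With these two repairs your argument is complete; note also that the framing discussion is not needed here, as it is the content of Proposition \ref{karlolov}.
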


\begin{proof}
Apply \cite[Theorem 20.2]{AbRo} to the homotopy
$j^1h_t:\sthree\rightarrow J^1(\sthree,\rfour)$ and the (closure
of) the submanifold $\Sigma^1$ of $J^1(\sthree,\rfour)$.
\end{proof}

A locally stable map $\sthree\to\rfour$ is characterized by its
normal forms in local coordinates. A map $f:\sthree \rightarrow
\rfour$ is locally stable if and only if for each $p\in\sthree$
there are charts $\alpha:U \rightarrow \mathbb{R}^3$ on $\sthree$
centered at $p$ and $\beta:V \rightarrow \rfour$ on $\rfour$
centered at $f(p)$ such that $f(U)\subset V$ and such that $\beta
\circ f \circ \alpha^{-1}:\alpha(U) \rightarrow \rfour$ has normal
form
\begin{equation*}
(\beta \circ f \circ \alpha^{-1})(x,y,z)=\left\{ \begin{array}{lll}
(x,y,z,0) & \text{if} & p\in \sthree \setminus \Sigma(f), \\
(x,y,z^2,yz) & \text{if} & p\in \Sigma(f).
\end{array} \right.
\end{equation*}

Every locally stable map $\sthree\to\rfour$ can be deformed into a
stable map using an arbitrarily small \lsh. The restrictions
$f|_{\Sigma(f)}$ and $f|_{\sthree \setminus \Sigma(f)}$ of a stable map
$f:\sthree\to\rfour$ are immersions with normal crossings, and the images of these
immersions intersect transversely. By \eqref{gen_pos}, $f|_{\Sigma(f)}$ 
is an embedding, and the preimage
$f^{-1}(q)$ of each $q\in\rfour$ contains at most 4 points. In
fact, if $f^{-1}(q)\cap \Sigma(f)\ne\emptyset$, then $f^{-1}(q)$
contains at most 2 points.

\begin{proposition}
\label{sture}
Let $f:\sthree\to\rfour$ be a singular stable map, and let $K$ be
a component of $\Sigma(f)$. Then there are tubular neighborhoods
$\alpha:\sone\times D^2_r\to U$ of $K$ and
$\beta:\sone\times D^3_s\to V$ of $f(K)$ such that $f(U)\subset
V$ and such that $\beta^{-1}\circ
f\circ\alpha:\sone\times D^2_r\to\sone\times D^3_s$ is given by
\begin{equation}
\label{normform}
(\beta^{-1}\circ f\circ\alpha)(\theta,x,y)=(\theta,x,y^2,xy).
\end{equation}
\end{proposition}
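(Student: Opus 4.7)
The plan is to construct $\alpha$ and $\beta$ in two stages: first produce preliminary tubes from a pair of globally defined frames that bring $f$ into \eqref{normform} to leading order, then invoke a parametrized Mather argument to kill the higher-order remainder.

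For the first stage, parametrize $K$ by $\gamma:\sone\to K$; since $f|_{\Sigma(f)}$ is an embedding, $\tilde\gamma:=f\circ\gamma$ parametrizes $f(K)$. The normal bundle $N_K=TS^3|_K/TK$ is an orientable rank-$2$ bundle over $\sone$, hence trivial, and by Proposition \ref{fredrik} it contains the trivial line sub-bundle $\ker df|_K$. From the isomorphism $\det N_K\cong\ker df\otimes(N_K/\ker df)$, the quotient line bundle is also trivial, so I pick nonvanishing sections $e_1$ of a complement of $\ker df$ and $e_2$ of $\ker df$, and set $\alpha_0(\theta,x,y)=\exp_{\gamma(\theta)}(xe_1(\theta)+ye_2(\theta))$, which is a diffeomorphism onto a tubular neighborhood $U_0$ of $K$ for $r$ small. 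In the target, set $F_1(\theta)=df(e_1(\theta))$, $F_2(\theta)=\tfrac12\partial_y^2(f\circ\alpha_0)|_{(x,y)=0}$ and $F_3(\theta)=\partial_{xy}^2(f\circ\alpha_0)|_{(x,y)=0}$; reading off the local normal form from Subsection \ref{ulla} shows that these, together with $\tilde\gamma'(\theta)$, span $\rfour$ at every $\theta$. Put $\beta_0(\theta,X,Y,Z)=\tilde\gamma(\theta)+XF_1(\theta)+YF_2(\theta)+ZF_3(\theta)$. After shrinking $r$ one has $f(U_0)\subset V_0$, and by construction
\begin{equation*}
\beta_0^{-1}\circ f\circ\alpha_0(\theta,x,y)=(\theta,x,y^2,xy)+R(\theta,x,y),
\end{equation*}
with $R$ a remainder vanishing to order three in $(x,y)$ relative to the leading Whitney umbrella terms.

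The main obstacle is removing $R$ without disturbing the leading normal form. This is a $\sone$-parametrized instance of the uniqueness of the stable Whitney umbrella germ: since \eqref{normform} is infinitesimally stable in the sense of \eqref{eq1} and that equation admits an explicit polynomial solution for the Whitney umbrella, any smooth one-parameter family of perturbations of \eqref{normform} can be conjugated away by smooth families of source and target diffeomorphisms fixing $\Sigma(f)$ and $f(\Sigma(f))$ respectively and supported near them. The triviality of the frames $(e_1,e_2)$ and $(F_1,F_2,F_3)$ secured in the first stage guarantees no monodromy obstruction as $\theta$ goes once around $\sone$, so the infinitesimal conjugations can be integrated to an honest pair of $\theta$-families of diffeomorphisms. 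Composing $\alpha_0$ and $\beta_0$ with these yields the $\alpha$ and $\beta$ of the proposition.
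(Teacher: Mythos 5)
Your stage 1 is sound in outline: trivializing $\Ker(df)|_K$ via Proposition \ref{fredrik} and a complementary line in the normal bundle, and framing the target along $f(K)$ by $\tilde\gamma'$, $df(e_1)$ and the two second-derivative vectors, is in effect a verification of the triviality of the normal bundles of $K$ and $f(K)$ together with the kernel line; this is exactly the remark the paper adds to its one-line proof, which otherwise just quotes the general theorem of Sz\H{u}cs \cite{Szu1}. One inaccuracy: your remainder $R$ is not of third order. The pure $x^2$-term $\tfrac{x^2}{2}\partial_x^2(f\circ\alpha_0)|_{(0,0)}$ has, in general, nonzero components along $F_2$ and $F_3$, so the third and fourth coordinates of $\beta_0^{-1}\circ f\circ\alpha_0$ contain terms $c(\theta)x^2$ and $d(\theta)x^2$ of the same order as the leading terms $y^2$ and $xy$. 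This is repairable (absorb the quadratic terms into a modified $\beta_0$), but as written the claimed leading-order agreement is overstated.

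The genuine gap is stage 2, which is where the whole content of the proposition lies and which you assert rather than prove. The statement ``any smooth one-parameter family of perturbations of \eqref{normform} can be conjugated away'' is false in that generality: adding $\epsilon y$ to the third coordinate of $(\theta,x,y)\mapsto(\theta,x,y^2,xy)$ produces an immersion, which is equivalent to no umbrella family at all. What is actually needed is a parametrized uniqueness theorem for germs along the whole circle: the germ of $f$ along $K$, with its kernel framing, is equivalent to the germ of the model along $\sone\times\{0\}$ by diffeomorphisms depending smoothly on $\theta$ and defined globally in $\theta$. To get this by a Thom--Levine/Moser homotopy argument you would need (i) an interpolating family $f_t$ joining your map to the model whose members remain locally stable with singular set $K$ and kernel spanned by $e_2$ --- the naive interpolation ``model $+\,tR$'' need not have this property; (ii) solvability of the infinitesimal equation \eqref{eq1} for time-dependent vector fields with smooth dependence on $(\theta,t)$, which rests on the Malgrange preparation theorem with parameters, not merely on an explicit polynomial solution for the single model germ; and (iii) an integration step producing flows that preserve the tubes. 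Your monodromy remark addresses none of these: if \eqref{eq1} is locally solvable in the parameters, globalization in $\theta$ is automatic from linearity of \eqref{eq1} and a partition of unity, whereas the place where triviality of the bundles genuinely enters is the reduction of the structure group of the normal-form data to obtain product tubes --- and that is precisely the content of the theorem of Sz\H{u}cs \cite{Szu1} that the paper cites. So either carry out the parametrized Mather-type argument in detail (with a corrected interpolation), or fall back on the citation as the paper does; as it stands the decisive step is missing.
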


\begin{proof} This proposition is a special case of a general theorem of Sz\H{u}cs \cite{Szu1}.
Note that in our case the normal bundles of $\Sigma(f)$ and $f(\Sigma(f))$ are trivial. 
\end{proof}

\subsection{The kernel bundle and framed links in $\sthree$.}
\label{totto}
Let $f:\sthree\rightarrow\rfour$ be a singular locally stable map.
The differential of $f$ restricts to a linear corank one map
$df:T\sthree|\Sigma(f)\to T\rfour$. Let $\Ker(df)$ be its kernel
line bundle. Then $\Ker(df)$ is nowhere tangential to $\Sigma(f)$.

\begin{proposition}
\label{fredrik}
If $f:\sthree \rightarrow \rfour$ is a singular locally stable
map, then $\Ker(df)$ is trivial.
\end{proposition}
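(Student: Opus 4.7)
The plan is to reduce to the case when $f$ is stable and then read off a global section of $\Ker(df)$ from the normal form in Proposition \ref{sture}.

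First I would observe that $\Sigma(f)$ is a closed $1$-manifold in $\sthree$, hence a disjoint union of circles, and that a real line bundle over a circle is either trivial or the M\"obius bundle. Triviality of $\Ker(df)$ therefore amounts to orientability on each component.

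Second, I would argue that the isomorphism class of $\Ker(df)$ is invariant under \lshs. Given an \lsh\ $h_t:\sthree\to\rfour$, Proposition \ref{martin} supplies an ambient isotopy of $\sthree$ carrying $\Sigma(h_0)$ onto $\Sigma(h_1)$. Pulled back through this isotopy, the kernel bundles $\Ker(dh_t)$ form a continuous family of line bundles over a fixed disjoint union of circles. Since the set of isomorphism classes of such line bundles is discrete, the family is locally constant, and in particular the bundles at $t=0$ and $t=1$ are isomorphic. Because every locally stable map can be deformed into a stable map by an arbitrarily small \lsh\ (Subsection \ref{ulla}), it therefore suffices to prove the proposition for stable $f$.

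Third, for stable $f$ and each component $K$ of $\Sigma(f)$ I would apply Proposition \ref{sture} to obtain a tubular neighborhood $\alpha:\sone\times D^2_r\to U$ of $K$ in which $f$ pulls back to the normal form $(\theta,x,y)\mapsto(\theta,x,y^2,xy)$. Along $K$, which corresponds to $\sone\times\{(0,0)\}$, the Jacobian reduces to
\begin{equation*}
\begin{pmatrix} 1 & 0 & 0 \\ 0 & 1 & 0 \\ 0 & 0 & 0 \\ 0 & 0 & 0 \end{pmatrix},
\end{equation*}
whose kernel is spanned by $\partial/\partial y$. Hence $\alpha_\ast(\partial/\partial y)$ is a nowhere-vanishing global section of $\Ker(df)|_K$, and it trivializes the bundle there.

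The principal obstacle is justifying the \lsh-invariance step: one must check that the kernel line bundles, pulled back through the isotopy supplied by Proposition \ref{martin}, really do assemble into a genuine continuous $1$-parameter family of line bundles over a fixed base, and not merely a pointwise correspondence. Once that is in hand the rest of the argument is a short calculation in the normal form.
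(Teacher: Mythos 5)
Your argument is sound, but for the stable case it takes a genuinely different route from the paper. You read off triviality of $\Ker(df)$ directly from the global product normal form of Proposition \ref{sture}: once $f$ is $(\theta,x,y)\mapsto(\theta,x,y^2,xy)$ on a tubular neighborhood $\alpha:\sone\times D^2_r\to U$ of a component $K$, the section $\alpha_\ast(\partial/\partial y)$ along $K$ trivializes the kernel, exactly as you compute. The paper instead gives a self-contained geometric proof: it considers the double point surface $\hat S$ of $f$ near $\Sigma(f)$, uses that $f|_{\hat S}$ is the orientation double covering of its image because the codimension is odd (Ekholm), and observes that the oriented surface $Y=\hat S\cup\Sigma(f)$ satisfies $TY|\Sigma(f)=\Ker(df)\oplus T\Sigma(f)$, whence $\Ker(df)$ is orientable. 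The trade-off is that your proof pushes all the geometric content into Sz\H{u}cs's structure theorem behind Proposition \ref{sture}; since that proposition precedes Proposition \ref{fredrik} in the paper and is justified there by the triviality of the normal bundles of $\Sigma(f)$ and $f(\Sigma(f))$, your use of it is not circular within the paper's logic, but you should be aware that the product form of the normal form is precisely what rules out a kernel-reversing twist, so your argument is only as strong as that citation, whereas the paper's proof is independent of it. For the reduction from locally stable to stable, your idea coincides with the paper's, and the obstacle you flag (assembling the kernel bundles into an honest continuous family) is resolved exactly as the paper does it: pass to the track $H:\sthree\times I\to\rfour\times I$ of the \lsh, note that $\Omega=J^{-1}(\Sigma^1)$ is a product cobordism from $\Sigma(h_0)\times\{0\}$ to $\Sigma(h_1)\times\{1\}$ by \cite[Theorem 20.2]{AbRo}, and use the single line bundle $\Ker(dH)$ over $\Omega$, whose restrictions to the two ends are canonically isomorphic to $\Ker(dh_0)$ and $\Ker(dh_1)$ (a one-line check shows a kernel vector of $dH$ has vanishing $I$-component); a line bundle over a product cobordism is determined by its restriction to one end, which closes the gap.
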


\begin{proof}
Assume first that $f$ is stable. Let $U$ be an open set in
$\sthree$ that is properly contained in
$\sthree\setminus\Sigma(f)$ and which covers the finitely many
points in $\sthree\setminus\Sigma(f)$ with image in
$f(\Sigma(f))$. Let $\bar{U}$ be the closure of $U$ in $\sthree$,
and let $V$ be the set remaining when all triple lines and
quadruple points for the restriction $f|_{\sthree\setminus\bar U}$ are
removed from $\sthree\setminus\bar U$. Then $V$ is an open
neighborhood of $\Sigma(f)$ such that $|f^{-1}(f(p))\cap V|\leq 2$
for all $p\in V$ and $f^{-1}(f(p))\cap V= \{p\}$ for all
$p\in\Sigma(f)$.

The restriction $f|_{V\setminus\Sigma(f)}$ is an immersion with
normal crossings and at most double points. Hence $\hat S=\{p\in
V:|f^{-1}(f(p))\cap V|=2\}$ and $S=f(\hat S)$ are surfaces in
$\sthree$ and $\rfour$ respectively, and $f|_{\hat S}:\hat
S\rightarrow S$ is a double covering. By \cite[Proposition 2.5]{Ekh1},
which applies to arbitrary connected manifolds, 
since $\dim \rfour - \dim \sthree$ is odd, the
covering $f|_{\hat S}$ is the orientation double covering of $S$.
Hence $\hat S$ is naturally oriented.

Let $Y=\hat S\cup\Sigma(f)$. Then $Y$ is a surface in $\sthree$,
and the orientation of $\hat S$ extends to an orientation of $Y$.
Moreover, the induced bundle $TY|\Sigma(f)$ equals $\Ker(df)\oplus
T\Sigma(f)$. Because $T\Sigma(f)$ is orientable so is $\Ker(df)$.
Consequently $\Ker(df)$ is trivial.

Now assume $f$ is merely locally stable. Let
$h_t:\sthree\to\rfour$ be an \lsh\ from a stable map $g$ to $f$,
and let $J:\sthree\times I\to J^1(\sthree,\rfour)$ be the homotopy
$J(p,t)=j^1h_t(p)$. Then $J$ is transverse to $\Sigma^1$ and, by
\cite[Theorem 20.2]{AbRo}, the manifold
$\Omega=J^{-1}(\Sigma^1)$ is a product cobordism from
$\Sigma(g)\times \{0\}$ to $\Sigma(f)\times\{1\}$.

Let $H:\sthree\times I\to\rfour\times I$ be the track of the
homotopy $h_t$. The differential of $H$ restricts to a linear
corank one map $dH:T(\sthree\times I)|\Omega\to T(\rfour\times
I)$. Let $\Ker(dH)$ be its kernel line bundle. Then $\Ker(dH)$ is
trivial since $\Ker(dH)|(\Sigma(g)\times\{0\})$ is isomorphic to
the trivial bundle $\Ker(dg)$. Hence also $\Ker(df)$ is trivial
since $\Ker(df)$ is isomorphic to $\Ker(dH)|(\Sigma(f)\times
\{1\})$.
\end{proof}

A \emph{framed link} in $\sthree$ is a pair $(L,u)$ where $L$ is a
link in $\sthree$ and $u:L\to T\sthree|L$ is a section such that
$u(p)$ is non-tangential to $L$ for every $p\in L$. An
\emph{isotopy of framed links in $\sthree$} is a 1-parameter
family of framed links $(L_t,u_t)$ such that $L_t=\varphi_t(L_0)$
for an isotopy $\varphi_t:L_0\to\sthree$, and
$u_t\circ\varphi_t:L_0\to T\sthree$ is a homotopy. The framed
links $(L_0,u_0)$ and $(L_1,u_1)$ are then called \emph{isotopic},
and the family $(L_t,u_t)$ is called an \emph{isotopy from
$(L_0,u_0)$ to $(L_1,u_1)$.} It is well known that if $(L_0,u_0)$
and $(L_1,u_1)$ are isotopic framed links in $\sthree$, then there
is an ambient isotopy $\varphi_t:\sthree\to\sthree$ such that
$\varphi_1(L_0)=L_1$ and $d\varphi_1\circ u_0=u_1\circ
\varphi_1|_{L_0}$.

For a singular locally stable map $f:\sthree\to\rfour$ the bundle
$\Ker(df)$ is nowhere tangential to the singularity link
$\Sigma(f)$. Furthermore $\Ker(df)$ is trivial by Proposition
\ref{fredrik}. Hence $\Ker(df)$ admits a nowhere vanishing section
$u:\Sigma(f)\to\Ker(df)$. The pair $(\Sigma(f),u)$ is a framed
link in $\sthree$, whose isotopy class does not depend on the
choice of $u$.
\begin{definition}
\label{def_of_sigma}
Let $\sigma(f)$ be the isotopy class of the framed link
$(\Sigma(f),u)$.
\end{definition}

\begin{proposition}
\label{karlolov}
If $h_t:\sthree\to\rfour$ is an \lsh\ of singular locally stable
maps, then $\sigma(h_0)=\sigma(h_1)$.
\end{proposition}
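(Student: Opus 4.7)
The plan is to reuse and slightly strengthen the construction in the proof of Proposition \ref{fredrik}. That proof already produces, for the track $H(p,t)=(h_t(p),t)$ of the homotopy and the jet map $J(p,t)=j^1h_t(p)$, a product cobordism $\Omega=J^{-1}(\Sigma^1)\subset\sthree\times I$ between $\Sigma(h_0)\times\{0\}$ and $\Sigma(h_1)\times\{1\}$, and it establishes that the kernel line bundle $\Ker(dH)\to\Omega$ is trivial. I will use both facts directly.

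First I would apply Ehresmann's fibration theorem to the proper submersion $\pi\colon\Omega\to I$ to obtain a diffeomorphism $\Omega\cong\Sigma(h_0)\times I$ which is the identity over $t=0$ and commutes with projection to $I$. Reading off slices yields an isotopy $\psi_t\colon\Sigma(h_0)\to\sthree$ of smooth embeddings with $\psi_0=\operatorname{id}$ and $\psi_t(\Sigma(h_0))=\Sigma(h_t)$ for each $t$. This step is essentially a repackaging of Proposition \ref{martin} in a form that exposes the parameter $t$.

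Next I would fix a single nowhere vanishing section $U$ of the trivial bundle $\Ker(dH)\to\Omega$ and restrict it to each slice to obtain sections $U_t\colon\Sigma(h_t)\to\Ker(dh_t)$. Since $\Ker(dh_t)$ is nowhere tangential to $\Sigma(h_t)$, every $U_t$ is a valid framing, and by construction $U_t\circ\psi_t$ depends smoothly on $t$. Thus $((\Sigma(h_t),U_t))_{t\in I}$ is an isotopy of framed links, and combined with the independence of $\sigma(f)$ from the choice of framing noted just before Definition \ref{def_of_sigma}, this gives $\sigma(h_0)=[(\Sigma(h_0),U_0)]=[(\Sigma(h_1),U_1)]=\sigma(h_1)$.

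The main obstacle is essentially prepaid: the transversality of $J$ to $\Sigma^1$, the product structure of $\Omega$, and the triviality of $\Ker(dH)$ were all established in the proof of Proposition \ref{fredrik}. The only genuinely new observation is that one global trivializing section of $\Ker(dH)$ simultaneously frames every intermediate singularity link in a way compatible with the link isotopy of Proposition \ref{martin}, which is precisely what upgrades a link isotopy to an isotopy of framed links.
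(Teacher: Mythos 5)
Your argument is essentially the paper's own proof: the paper also takes the track $H$, the product cobordism $\Omega=J^{-1}(\Sigma^1)$, and a single nowhere vanishing section of the trivial bundle $\Ker(dH)$, pushing it to each slice (via the projection $q\colon\sthree\times I\to\sthree$) to frame every $\Sigma(h_t)$ compatibly with the link isotopy. The only cosmetic difference is that you invoke Ehresmann's fibration theorem for the slice-preserving product structure where the paper cites \cite[Theorem 20.2]{AbRo}; the content is the same.
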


\begin{proof}
Let $H:\sthree\times I\to \rfour\times I$ be the track of $h_t$,
and, as in the proof of Proposition \ref{fredrik}, let
$\Omega=J^{-1}(\Sigma^1)$ where $J:\sthree\times I\to
J^1(\sthree,\rfour)$ is the homotopy $J(p,t)=j^1h_t(p)$. Then the
kernel bundle $\Ker(dH)$ of the restriction of $dH$ to
$T(\sthree\times I)|\Omega$ is a trivial line bundle over $\Omega$
which is nowhere tangential to $\Omega$.

Let $q:\sthree\times I\to \sthree$ be the projection. Then
$q(\Omega\cap(\sthree\times \{t\}))=\Sigma(h_t)$, and $dq$ restricts to an
isomorphism from
$\Ker(dH)|(\Omega\cap(\sthree\times \{t\}))$ to $\Ker(dh_t)$. Let
$u:\Omega\to \Ker(dH)$ be a nowhere vanishing section. For each
$t\in I$ define $u_t:\Sigma(h_t)\to\Ker(dh_t)$ by
$u_t(p)=dq(u(p,t))$. Then, by \cite[Theorem 20.2]{AbRo},
$(\Sigma(h_t),u_t)$ is an isotopy from $(\Sigma(h_0),u_0)$ to
$(\Sigma(h_1),u_1)$. Hence $\sigma(h_0)=\sigma(h_1)$.
\end{proof}

\subsection{Orientations and metrics}
\label{orient}
The following conventions will be used throughout.
We equip $\R^n$ with its standard orientation
and metric. The closed ball in $\R^n$ of radius $r$
centered at the origin is denoted $D^n_r$. We orient $S^k$ as the
boundary of $D^{k+1}_1$, and we equip $S^k$ 
with the metric induced from $\R^{k+1}$.

\begin{remark}
We use the \emph{outward normal first convention} to orient the
boundary of an oriented manifold. That is, if $p$ belongs to the
boundary $\partial M$ of an oriented manifold $M$, then a basis
for $T_p\partial M$ is positively oriented if and only if an
outward pointing normal to $\partial M$ at $p$ followed by the
basis for $T_p\partial M$ is a positively oriented basis for
$T_pM$.
\end{remark}

\section{The invariants}
\label{invariants}
An \emph{\lsh\ invariant} is a locally constant function on the
space of locally stable maps. In this section we define two \lsh\
invariants: $\nu$, see Subsection \ref{nu}, and $\kappa$, see
Subsection \ref{ludde}. The invariant $\nu$ generalizes the normal
degree for immersions $\sthree\to\rfour$, and
$\kappa$ generalizes the $\R^5$-Smale invariant of immersions $S^3\to\R^4$,
i.e., the Smale invariant of an immersion $S^3\to\R^4$ composed with the
inclusion $\R^4\subset\R^5$, see \cite{EkSz}.
In Subsection \ref{conn_sum} we describe the behavior
of $\nu$ and $\kappa$ under taking connected sum of locally stable
maps, and in Subsection \ref{immersions} we relate $\nu$ and
$\kappa$ to the Smale invariant for immersions $\sthree\to\rfour$.

\subsection{The invariant $\nu$.}
\label{nu}
The normal degree $\nu$ defined on immersions $\sthree\to\rfour$
is a regular homotopy invariant. We extend $\nu$ to an \lsh\
invariant.

Let $f:\sthree\to\rfour$ be a singular locally stable map. Let
$\Gamma:\sthree\setminus\Sigma(f)\to\sthree\times\sthree$ be the
graph of the Gauss map of the immersion
$f|_{\sthree\setminus\Sigma(f)}$, and let
$q_1:\sthree\times\sthree\to\sthree$ be projection onto the first
factor. Then, by \cite[Lemma 5.1]{Gor}, the
intersection of the closure of the image of $\Gamma$ with
$q_1^{-1}(\Sigma(f))$ is a circle bundle $S$ over $\Sigma(f)$. The
bundle is such that its fiber over $p\in\Sigma(f)$ is
a great circle $S_p$ in $\{p\}\times S^3$. Denote the closure of
the image of $\Gamma$ by $M$. Then $M$ is a manifold with boundary
$S$.

Let $u:\Sigma(f)\to T\sthree|\Sigma(f)$ be a section with no
$u(p)$ in $\Ker(df)\oplus T\Sigma(f)$. (The section exists
by Proposition \ref{fredrik}.) Define
$U:\Sigma(f)\to\sthree\times\sthree$ by
\begin{equation*}
U(p)=\left(p,\frac{df(u(p))}{|df(u(p))|}\right).
\end{equation*}
Then, for each $p\in
\Sigma(f)$ there is a unique equator 2-sphere in $\{p\}\times S^3$
which contains $S_p$ and $U(p)$. Let $D_p$ be the
closed hemisphere of this 2-sphere that contains $U(p)$ and has $S_p$ as its boundary. 
The hemispheres $D_p$ form fibers in a disk bundle
$D$ over $\Sigma(f)$.

Let $q_2:\sthree\times\sthree\to\sthree$ be projection onto the
second factor. Equip $M$ with the orientation induced from
$\sthree\setminus\Sigma(f)$ via $\Gamma$. The orientation on $M$
defines an orientation on $S$. Equip the total space of $D$ with
the orientation that induces the opposite orientation on $S$. 
($D$ is orientable since $S$ is orientable.) Let
$m_f=q_2(M)$ and $d_f=q_2(D)$. Then $m_f+d_f$ is a $3$-cycle in
$\sthree$ whose homology class we denote by $\hv m_f+d_f\hh$.

\begin{definition}
\label{def_of_nu}
Let $\nu(f)$ be the integer defined by $\hv m_f+d_f\hh
=\nu(f)\hv\sthree\hh$, where $\hv\sthree\hh$ is the fundamental
homology class in $H_3(\sthree;\Z)$.
\end{definition}

\begin{remark} The integer $\nu(f)$ is independent of the choice of $u$.
Indeed, if $D'$ is the disk bundle over $\Sigma(f)$ obtained from
a different choice of section $u'$ of $T\sthree|\Sigma(f)$, and if
$d_f'=q_2(D')$, then $\hv m_f+d_f\hh =\hv  m_f+d'_f\hh $. The
equality is obvious if $u'$ is homotopic to $u$ through sections
of $T\sthree|\Sigma(f)$ with images in the complement of
$\Ker(df)\oplus T\Sigma(f)$. On the other hand, if $u'=-u$ and $p\in\Sigma(f)$, then
$D_p$ and $D'_p$ together form an equator 2-sphere which is the boundary of a 
hemisphere $H_p$ of $\{p\}\times\sthree$. This can be globalized by Proposition \ref{sture} and we get that 
$d_f-d'_f=\partial q_2(H)$ for a disk bundle $H$ over $\Sigma(f)$. 
Thus $d_f-d'_f$ is a zero-homologous cycle in $\sthree$.
\end{remark}

\begin{proposition}
\label{knocke}
If $h_t:\sthree\to\rfour$ is an \lsh\ of singular locally stable
maps, then $\nu(h_0)=\nu(h_1)$.
\end{proposition}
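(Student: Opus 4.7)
The plan is to recognize $\nu(f)$ as the degree of a smooth map $F_f\colon X_f\to\sthree$, where $X_f:=M\cup_S D$ is a closed oriented $3$-manifold and $F_f$ is the restriction of $q_2$ to $X_f$. Bordism invariance of degree for maps to $\sthree$ then reduces the problem to constructing, from the \lsh\ $h_t$, a compact oriented $4$-manifold $\tilde X_H$ with boundary $X_{h_1}\sqcup(-X_{h_0})$ together with a smooth map $\tilde F_H\colon\tilde X_H\to\sthree$ restricting to $F_{h_0}$ and $F_{h_1}$ on the two boundary components.

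To build $\tilde X_H$ I would run the construction of Definition \ref{def_of_nu} parametrically. Let $H\colon\sthree\times I\to\rfour\times I$ be the track of $h_t$, and let $\Omega=J^{-1}(\Sigma^1)\subset\sthree\times I$ as in the proof of Proposition \ref{fredrik}; that proof shows $\Omega$ is a product cobordism from $\Sigma(h_0)\times\{0\}$ to $\Sigma(h_1)\times\{1\}$ and that the kernel bundle $\Ker(dH)$ is a trivial line bundle nowhere tangential to $\Omega$. The fiberwise Gauss maps of the immersions $h_t|_{\sthree\setminus\Sigma(h_t)}$ assemble into a smooth map from $(\sthree\times I)\setminus\Omega$ into $\sthree$; let $\tilde M\subset(\sthree\times I)\times\sthree$ be the closure of its graph. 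By the same local analysis that underlies \cite[Lemma 5.1]{Gor} and Proposition \ref{sture}, applied in one-parameter families by means of the normal form \eqref{normform}, the set $\tilde M$ is a smooth oriented $4$-manifold whose boundary consists of $M_{h_1}$, $-M_{h_0}$, and a circle bundle $\tilde S$ over $\Omega$ whose restriction to $\Sigma(h_t)\times\{t\}$ is $S_{h_t}$.

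Next I would extend the sections $u_0,u_1$ used to construct $D_{h_0},D_{h_1}$ to a section $\tilde u\colon\Omega\to T(\sthree\times I)|\Omega$ with image avoiding $\Ker(dH)\oplus T\Omega$; such an extension exists by the triviality of $\Ker(dH)$ established in Proposition \ref{fredrik} together with the triviality of the normal bundle to $\Omega$. The hemispheres determined fiberwise by $\tilde u$ form a disk bundle $\tilde D$ over $\Omega$ with $\partial\tilde D=\tilde S\cup D_{h_1}\cup(-D_{h_0})$ after orientation, and gluing gives the desired $\tilde X_H=\tilde M\cup_{\tilde S}\tilde D$. Projection $\tilde q_2\colon(\sthree\times I)\times\sthree\to\sthree$ onto the last factor yields $\tilde F_H:=\tilde q_2|_{\tilde X_H}$, whose restrictions to the boundary components are precisely $F_{h_0}$ and $F_{h_1}$, so $\nu(h_0)=\deg F_{h_0}=\deg F_{h_1}=\nu(h_1)$.

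The main technical obstacle is the parametric normal-form statement underlying the assertion that $\tilde M$ is a smooth $4$-manifold meeting $q_1^{-1}(\Omega)$ along a smooth circle bundle $\tilde S$ with the correct induced orientation. For a single stable map this is Proposition \ref{sture}; in the one-parameter setting one needs a version that holds through the homotopy. I would handle this either by approximating $h_t$ by a stable \lsh\ (possible because stable maps are open and dense, and the construction of $\nu$ is invariant under small \lshs\ in an obvious way) or by a direct parametric normal form for $1$-parameter families of Whitney umbrellas, using the product-cobordism structure of $\Omega$ supplied by the proof of Proposition \ref{fredrik}. Independence of the choice of $\tilde u$ is handled exactly as in the remark following Definition \ref{def_of_nu}.
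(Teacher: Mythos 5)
Your proposal is correct and is essentially the paper's own argument: the paper likewise runs the construction of Definition \ref{def_of_nu} parametrically over the track $H$, producing the closure $\mathcal{M}$ of the parametric Gauss graph, a circle bundle $\mathcal{S}$ over $\Omega$, and a disk bundle $\mathcal{D}$ from a section avoiding $T\Omega\oplus\Ker(dH)$. The only difference is packaging: the paper pushes $\mathcal{M}$ and $\mathcal{D}$ forward to $4$-chains $m_H,d_H$ in $\sthree$ with $\partial(m_H+d_H)=m_{h_1}+d_{h_1}-m_{h_0}-d_{h_0}$, whereas you glue along $\tilde S$ and invoke cobordism invariance of degree, which amounts to the same homological computation.
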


\begin{proof}
Define $J:\sthree\times I\to J^1(\sthree,\rfour)$ by
$J(p,t)=j^1h_t(p)$, and let $\Omega=J^{-1}(\Sigma^1)$. Let
$H:\sthree\times I\to\rfour\times I$ be the track of $h_t$, and
let $\Ker(dH)$ be the kernel bundle of $dH$ over $\Omega$. Define
$\Gamma:(\sthree\times I)\setminus\Omega\to \sthree\times
I\times\sthree$ by $\Gamma(p,t)=(p,t,n_t(p))$, where
$n_t:\sthree\setminus\Sigma(h_t)\to\sthree$ is the Gauss map of
$h_t|_{\sthree\setminus\Sigma(h_t)}$. Then, as above, the closure
of the image of $\Gamma$ is an oriented manifold $\mathcal{M}$
with oriented boundary a circle bundle $\mathcal{S}$ over
$\Omega$. Fix a section $u:\Omega\to T(\sthree\times I)|\Omega$
with no $u(p,t)$ in $T\Omega\oplus\Ker(dH)$, and, in analogy with
the above, use $u$ to define a disk bundle $\mathcal{D}$ over $\Omega$. Orient
the total space of $\mathcal{D}$ so that $\mathcal{D}$ induces the
opposite orientation on $\mathcal{S}$. Let $q:\sthree\times
I\times\sthree\to\sthree$ be projection onto the last factor. Then
$m_H=q(\mathcal{M})$ and $d_H=q(\mathcal{D})$ are 4-chains in
$\sthree$, and
$\partial(m_H+d_H)=m_{h_1}-m_{h_0}+d_{h_1}-d_{h_0}$. Hence $\hv
m_{h_1}+d_{h_1}\hh =\hv m_{h_0}+d_{h_0}\hh $.
\end{proof}

\subsection{The invariant $\kappa$.}
\label{ludde}
Let $D$ be a 4-dimensional disk in $\rfive_+$, the upper half
space of $\rfive$, with boundary $\partial D=D\cap\rfour=S^3$ and
such that $\sthree\times [0,\epsilon)$ is a collar neighborhood of
$\sthree$ in $D$ for some $\epsilon>0$. Orient $D$ so that the
induced orientation of $\partial D$ agrees with the fixed
orientation on $\sthree$. Let $f:\sthree\to\rfour$ be a locally
stable map, and let $i:\rfour\to\rfive$ be the standard inclusion.
We call an extension $F:D\to\rfive$ of $i\circ f$
\emph{admissible} if $F$ is locally stable and $F(p,t)=(i(f(p)),t)$ for
$(p,t)\in\sthree\times [0,\epsilon)$. The extension $F$ has
singularities of types $\Sigma^{1,0}$ and $\Sigma^{1,1}$. We call
the finitely many singularities of type $\Sigma^{1,1}$ \emph{cusps}. 
Ekholm and Sz\H{u}cs \cite{EkSz} proved that each
cusp of $F$ is naturally oriented. Thus each cusp is equipped with
a sign.

\begin{definition}
\label{def_of_kappa}
Let $\kappa(f)$ be the algebraic number of cusps of $F$.
\end{definition}

\begin{remark}
The orientation of a cusp $q$ of $F$ is defined as follows. The
total space of the cokernel bundle $\xi(F)$ of $dF$ over the
singularity surface $\Sigma^1(F)$ of $F$ is oriented, see
\cite{EkSz}. Choose a metric for $\Ker(dF)$ and let $\pm v(p)$ be
the two unit vectors in $\Ker(dF)$ at $p\in\Sigma^1(F)$. Then
$s:\Sigma^1(F)\to\xi(F)$ defined by $s(p)=d^2F(\pm v(p))$, where
$d^2F$ is the quadratic differential of $F$, is a section of
$\xi(F)$ that vanishes exactly at the cusps of $F$. The orientation
of $q$ is the local intersection number at $q$ of $s$ with the zero section.
\end{remark}

\begin{lemma}
\label{jonte}
Let $N$ be an oriented 4-sphere, and let $F:N\to\rfive$ be a
locally stable map. Then the algebraic number of cusps of $F$ is
zero.
\end{lemma}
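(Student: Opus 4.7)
The plan is to use the contractibility of $\rfive$ to smoothly deform $F$ to the standard embedding $\iota:N\hookrightarrow\rfive$, whose algebraic cusp count is trivially zero, and then to show that the count is preserved along the deformation. The argument follows the same track-of-the-homotopy template used in Propositions \ref{karlolov} and \ref{knocke}, applied now to the second Boardman stratum $\Sigma^{1,1}$.

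Choose a smooth homotopy $H:N\times I\to\rfive$ with $H_0=F$ and $H_1=\iota$, and form its track $\tilde H:N\times I\to\rfive\times I$, $\tilde H(p,t)=(H(p,t),t)$. This is a map of a compact oriented $5$-manifold with boundary into a $6$-manifold. By Thom transversality, after an arbitrarily small perturbation of $H$ which fixes $H|_{t=0,1}$, the jet extensions of $\tilde H$ are transverse to every Boardman stratum. Because $\tilde H$ preserves the $t$-coordinate, the Boardman codimensions for $\tilde H$ match those for the time-slices, and in particular $C:=\Sigma^{1,1}(\tilde H)\subset N\times I$ is a compact, properly embedded $1$-submanifold with boundary $\partial C=\Sigma^{1,1}(F)\times\{0\}\sqcup\Sigma^{1,1}(\iota)\times\{1\}=\Sigma^{1,1}(F)\times\{0\}$.

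I would orient $C$ via the direct analogue of the Ekholm--Sz\H{u}cs construction recalled in the remark preceding the lemma: over the Boardman $3$-manifold $\Sigma^1(\tilde H)$, the cokernel bundle $\xi(\tilde H)$ of $d\tilde H$ is canonically oriented of rank $2$, and the quadratic differential of $\tilde H$ along a unit vector in the kernel line gives a section whose transverse zero set is precisely $C$; the local intersection orientation of this section with the zero section orients $C$. Restricting to a time-slice $t$ recovers the analogous picture for $H_t$, so on $\partial C$ this orientation agrees, up to the outward-normal sign flip on $t=0$, with the point orientations defining $\kappa(F)$ and $\kappa(\iota)$. Since the signed number of boundary points of a compact oriented $1$-manifold is zero, I obtain $\kappa(\iota)-\kappa(F)=0$, hence $\kappa(F)=\kappa(\iota)=0$.

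The main obstacle is the orientation compatibility in the preceding step: verifying that the Ekholm--Sz\H{u}cs point orientation of cusps on each time-slice really does agree with the orientation of $\partial C$ inherited from $C$. This is a naturality check for the cokernel-bundle and quadratic-differential construction under pullback along $N\times\{t\}\hookrightarrow N\times I$, combined with the outward-normal boundary convention fixed in Subsection \ref{orient}.
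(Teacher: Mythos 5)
Your argument is correct and rests on the same mechanism as the paper's proof: extend $F$ to a generic map of a compact $5$-manifold into $\rfive\times I$ whose only singularities are of types $\Sigma^{1,0}$ and $\Sigma^{1,1}$, and use the oriented $1$-manifold $\Sigma^{1,1}$ of that extension as a cobordism whose signed boundary count, which is $\pm\kappa(F)$, must vanish. The difference is in how the extension is produced: the paper invokes Sz\H{u}cs's result that $F$ is null cobordant in the category of maps with at most $\Sigma^{1,1}$ singularities, taking a $5$-manifold $W$ with $\partial W=N$ and a map $B:W\to\rfive\times I$ restricting to $F$, whereas you take $W=N\times I$ and build $B$ by hand as the track of a generic null-homotopy from $F$ to the standard embedding (which has no singularities at all), using only the convexity of $\rfive$ and parametric jet transversality --- the same genericity of tracks that the paper itself uses in the proof of Proposition \ref{slk}. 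This makes the existence part more self-contained, but the orientation burden is unchanged: the claim that the Ekholm--Sz\H{u}cs orientation of $\Sigma^{1,1}$ of the $5$-dimensional extension restricts on the boundary to the cusp signs of the end maps is precisely what the paper's citation of \cite[Lemma 3]{Szu} covers, and you correctly flag it as the remaining check rather than proving it. Two refinements would tighten your write-up: require the homotopy to be constant in $t$ on collars $t\in[0,\epsilon)$ and $t\in(1-\epsilon,1]$, not merely to fix the two end slices, so that near the boundary the track is the trivial unfolding $(p,t)\mapsto(F(p),t)$, hence locally stable there, and $\Sigma^{1,1}(\tilde H)$ meets $N\times\{0\}$ neatly, exactly at the cusps of $F$; and note that genericity of the homotopy gives local stability of the track directly, with $\Sigma^{1,1,1}$ empty since its codimension exceeds $5$, which is cleaner than the slice-wise comparison of Boardman codimensions, because the interior slices $H_t$ need not themselves be generic.
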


\begin{proof}
The lemma is a special case of \cite[Lemma 3]{Szu}.
Here we only sketch the proof.
The map $F$ is null cobordant in the category of maps with at 
most $\Sigma^{1,1}$ singularities. That
is, there is a 5-manifold $W$ with boundary $N$ and a locally
stable map $B:W\to\rfive\times I$ whose restriction to $N$ equals
$F$ and the singularities of $B$ are of types $\Sigma^{1,0}$ and
$\Sigma^{1,1}$. The manifold $\Sigma^{1,1}(B)$ is an oriented
cobordism from the negative cusps of $F$ to the positive cusps of
$F$. Consequently the algebraic number of cusps of $F$ is zero.
\end{proof}

\begin{proposition}
\label{jojje}
The integer $\kappa(f)$ is independent of the choice of $D$, the
admissible extension $F$, and is an \lsh\ invariant of $f$.
\end{proposition}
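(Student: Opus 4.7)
The plan is to reduce all three claims to Lemma \ref{jonte} via a gluing construction, combined with a cobordism argument for the \lsh-invariance. The key step is to build a locally stable map from an oriented $4$-sphere into $\rfive$ whose algebraic cusp count computes the difference $\kappa(F_1)-\kappa(F_2)$ that I want to show is zero.

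To handle independence of both the disk $D$ and the admissible extension $F$, fix $f\colon\sthree\to\rfour$ and consider any two admissible extensions $F_1\colon D_1\to\rfive$ and $F_2\colon D_2\to\rfive$ of $i\circ f$. Let $\rho\colon\rfive\to\rfive$ be the reflection across $\rfour$, set $D_2^-=\rho(D_2)\subset\rfive_-$, and define $F_2^-=\rho\circ F_2\circ\rho^{-1}\colon D_2^-\to\rfive$. Then $F_2^-|_{\sthree}=i\circ f$ because $\rho$ fixes $\rfour$, and in the collar of $\sthree$ in $D_2^-$ the admissibility of $F_2$ gives $F_2^-(p,t)=(f(p),-t)$, where $t$ is the inward normal parameter of $D_2^-$. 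Orienting $N=D_1\cup_{\sthree}D_2^-$ so that $D_1$ retains its original orientation, the common collar $\sthree\times(-\epsilon,\epsilon)\subset N$ has both pieces agreeing with the single formula $(p,s)\mapsto(f(p),s)$. Thus the map $G\colon N\to\rfive$ given by $G|_{D_1}=F_1$ and $G|_{D_2^-}=F_2^-$ is smooth, locally stable, and defined on an oriented $4$-sphere, so by Lemma \ref{jonte} its algebraic cusp count vanishes. A sign analysis of the cusp orientations on $D_2^-$ versus those of $F_2$ on $D_2$ then shows that this count equals $\kappa(F_1)-\kappa(F_2)$, proving independence.

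For \lsh-invariance, given an \lsh\ $h_t$ from $f_0$ to $f_1$, I pick admissible extensions $F_0$ of $f_0$ and $F_1$ of $f_1$ on a common disk $D$ and use the independence just proved to reduce to showing $\kappa(F_0)=\kappa(F_1)$. Extending the product collar homotopy $(p,s)\mapsto(h_t(p),s)$ smoothly into the interior of $D$ gives a homotopy $F_t\colon D\to\rfive$ of admissible extensions of $h_t$, which I perturb to be generic in the space of admissible families. The cusp locus $\bigcup_{t\in I}\Sigma^{1,1}(F_t)\times\{t\}\subset D\times I$ is then a properly embedded oriented $1$-manifold whose signed boundary is $\Sigma^{1,1}(F_1)-\Sigma^{1,1}(F_0)$, so $\kappa(F_0)=\kappa(F_1)$ by the cobordism invariance of the signed count.

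The main obstacle is the sign computation in the gluing construction: establishing that the cusps of $F_2^-$ in $D_2^-$ with the $N$-orientation contribute with the opposite sign to the corresponding cusps of $F_2$ in $D_2$. This amounts to tracking how the orientations of the cokernel bundle $\xi(F)$ and of the singularity surface $\Sigma^1(F)$, which appear in the cusp orientation from the remark after Definition \ref{def_of_kappa}, transform under the orientation-reversing pre-composition $\rho^{-1}$ on sources (forced by the $N$-orientation of $D_2^-$) and the orientation-reversing post-composition $\rho$ on targets. Reversing the source orientation flips $\xi$ while leaving $\Sigma^1$ unchanged and hence flips the intersection sign, whereas reversing the target orientation flips both $\xi$ and $\Sigma^1$ and leaves the intersection sign intact. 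The net effect is exactly one sign flip, giving the desired formula for the algebraic cusp count of $G$, after which both independence and \lsh-invariance fall out of Lemma \ref{jonte} and the cobordism argument, respectively.
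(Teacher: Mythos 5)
Your argument for the independence of $D$ and of the admissible extension is essentially the paper's proof: reflect the second extension through $\rfour$, glue along the common collar to get a locally stable map of an oriented $4$-sphere, invoke Lemma \ref{jonte}, and observe that the reflected cusps count with opposite sign so that the total is $\kappa(F_1)-\kappa(F_2)$ (the paper, following \cite[Remark 3.2]{EkSz}, asserts this sign statement just as you do; your bookkeeping with $\xi(F)$ is a plausible justification of it). Where you genuinely diverge is the third assertion: the paper treats \lsh-invariance as an immediate consequence of extension-independence (one can, for instance, splice the track of the \lsh\ into the collar of an admissible extension of $h_0$ -- the track is a family of suspensions of stable germs, hence locally stable and cusp-free -- to obtain an admissible extension of $h_1$ with the same cusps), whereas you run a parametrized transversality argument, making the family $F_t$ generic rel the collar and reading off $\kappa(F_0)=\kappa(F_1)$ from the oriented cusp cobordism in $D\times I$. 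That argument is sound and parallels the proof sketch of Lemma \ref{jonte}, but note one imprecision in how you phrase it: a generic one-parameter family will pass through finitely many non-locally-stable maps, so it is not a ``homotopy of admissible extensions''; what you actually need is only admissibility at $t=0,1$, genericity of the family rel the collar, and the observation that no cusps can occur in the collar (where $F_t$ is the suspension $(p,s)\mapsto(h_t(p),s)$ of a locally stable map), which keeps the cusp locus away from $\partial D\times I$ and makes it a compact oriented $1$-manifold with boundary exactly $\Sigma^{1,1}(F_1)-\Sigma^{1,1}(F_0)$.
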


\begin{proof}
This is a special case of \cite[Remark 3.2]{EkSz}. For completeness 
we include the proof.

Suppose $F:D_1\to\rfive$ and $G:D_2\to\rfive$ are admissible
extensions of $i\circ f$. Denote the algebraic number of cusps of
$F$ and $G$ by $\sharp\Sigma^{1,1}(F)$ and $\sharp\Sigma^{1,1}(G)$
respectively. Let $\rho:\rfive\to\rfive$ be reflection in
$\rfour$, let $\bar D_2$ be the manifold $\rho(D_2)$ with
orientation inducing the opposite orientation on $\partial\bar
D_2=\sthree$, and let $\bar G:\bar D_2\to\rfive$ be the
composition $\rho\circ G\circ\rho|_{\bar D_2}$. The algebraic
number of cusps of $F\cup \bar G: D_1\cup \bar D_2\to\rfive$
equals $\sharp\Sigma^{1,1}(F)-\sharp\Sigma^{1,1}(G)$. Thus
$\sharp\Sigma^{1,1}(F)=\sharp\Sigma^{1,1}(G)$ by Lemma
\ref{jonte}, which proves the first and second assertion. The
third assertion follows immediately.
\end{proof}

For a singular locally stable map $f:\sthree\to\rfour$ let
$\sharp\slk\Sigma(f)$ be \emph{the total self linking number of
$\Sigma(f)$.} That is, if $K_1,\dots,K_d$ are the components of
$\Sigma(f)$, and if $u:\Sigma(f)\to \Ker(df)$ is a nowhere
vanishing section, then let
\begin{equation*}
\sharp\slk\Sigma(f)=\sum_{j=1}^d\slk(K_j,u|_{K_j})
\end{equation*}
where $\slk(K_j,u|_{K_j})$ is the self linking number of the
framed knot $(K_j,u|_{K_j})$.

\begin{proposition}
\label{slk}
If $f:\sthree\to\rfour$ is a singular stable map, then $\kappa(f)$
and $\sharp\slk\Sigma(f)$ have the same parity.
\end{proposition}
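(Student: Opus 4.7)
The plan is to express $\kappa(f)\bmod 2$ as a relative Euler number on the singularity surface of an admissible extension, and to identify this Euler number modulo $2$ with the pushoff linking number of $\Sigma(f)$. First, I would fix any admissible extension $F\colon D\to\rfive$ of $i\circ f$. By Thom--Boardman transversality, $\Sigma^1(F)$ is a properly embedded oriented $2$-surface in $D$ whose intersection with the collar $\sthree\times[0,\epsilon)$ equals $\Sigma(f)\times[0,\epsilon)$, so $\partial\Sigma^1(F)=\Sigma(f)$. Adapting the proof of Proposition \ref{fredrik} to $F$ (the codimension $\dim\rfive-\dim D=1$ is still odd, so the orientation double cover argument carries over), I would show that $\Ker(dF)$ is trivial over $\Sigma^1(F)$ and fix a nowhere-vanishing section $v$ whose restriction to $\Sigma(f)$ represents the framing $u$.

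Next I would reinterpret cusps geometrically. Split $TD|_{\Sigma^1(F)}=T\Sigma^1(F)\oplus N(\Sigma^1(F),D)$ and write $v=v_T+v_N$. The defining condition of a $\Sigma^{1,1}$ singularity at $p$ is $\Ker(dF)_p\subset T_p\Sigma^1(F)$, equivalently $v_N(p)=0$. Thus the cusps are the isolated transverse zeros of the section $v_N$ of the rank-$2$ oriented bundle $N(\Sigma^1(F),D)$. Each cusp contributes $\pm 1$ to the signed count using the orientation of $\xi(F)$ (Definition \ref{def_of_kappa}), and independently $\pm 1$ to the signed count using the orientation of $N(\Sigma^1(F),D)$; the two signed counts therefore agree modulo $2$. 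Consequently
$$\kappa(f)\equiv e\bigl(N(\Sigma^1(F),D),\,v_N|_{\Sigma(f)}\bigr)\pmod{2}.$$

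Finally, I would compute this relative Euler number by a pushoff. On the boundary, $N(\Sigma^1(F),D)|_{\Sigma(f)}=N(\Sigma(f),\sthree)$ and $v_N|_{\Sigma(f)}$ is the component of $u$ normal to $\Sigma(f)$ in $\sthree$, which is nowhere zero and homotopic to $u$ as a framing. Extending this boundary section generically to a section $\tilde v$ of $N(\Sigma^1(F),D)$ on $\Sigma^1(F)$, the algebraic count of zeros of $\tilde v$ equals the algebraic intersection of $\Sigma^1(F)$ with its pushoff in $D$; by the standard correspondence between self-intersection of surfaces in a $4$-ball and linking of their boundary links in the bounding $\sthree$, this equals $\mathrm{lk}(\Sigma(f),\Sigma(f)_u)=\sum_j\slk(K_j,u|_{K_j})+2\sum_{i<j}\mathrm{lk}(K_i,K_j)$, where $K_1,\dots,K_d$ are the components of $\Sigma(f)$. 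The cross-linking terms are even, so modulo $2$ the right-hand side reduces to $\sharp\slk\Sigma(f)$.

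I anticipate the main technical obstacle to be the adaptation of Proposition \ref{fredrik} to the extension $F\colon D\to\rfive$, namely establishing triviality of $\Ker(dF)$ over the full surface $\Sigma^1(F)$; the remaining identifications are essentially formal, and the mod $2$ reduction allows us to ignore sign subtleties distinguishing the $\xi(F)$-- and $N(\Sigma^1(F),D)$--orientation conventions at cusps.
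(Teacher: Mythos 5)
There is a genuine gap at the very first step, and it is not merely technical: you assume that $\Sigma^1(F)$ is an \emph{oriented} surface and that $\Ker(dF)$ is \emph{trivial} over all of $\Sigma^1(F)$, and the justification you offer (adapting Proposition \ref{fredrik} because the codimension is odd) cannot deliver this. Run the double-cover argument for $F\colon D\to\rfive$: it produces a canonically oriented $3$-manifold $Y=\hat S\cup\Sigma^1(F)$ with $TY|_{\Sigma^1(F)}=\Ker(dF)\oplus T\Sigma^1(F)$, and hence only the identity $w_1(\Ker(dF))=w_1(T\Sigma^1(F))$, i.e.\ orientability of the \emph{total space} of the kernel (equivalently cokernel) bundle — exactly the statement already used from \cite{EkSz} to define cusp signs. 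In the boundary case this sufficed because $T\Sigma(f)$ is a bundle over circles, hence trivial; over the singularity surface of the extension it does not, and $\Sigma^1(F)$ need not be orientable for an admissible extension (this is precisely why \cite{EkSz} define the sign of a cusp via the orientation of the total space of $\xi(F)$ rather than via orientations of $\Sigma^1(F)$ and $\xi(F)$ separately). Without a global nowhere-vanishing section $v$ of $\Ker(dF)$, your identification of the cusps with the zeros of $v_N$ and the ensuing relative Euler number computation do not get off the ground as stated. A repair is conceivable because you only need a mod $2$ statement — one can view the cusps as the degeneracy locus of the bundle map $\Ker(dF)\hookrightarrow TD|_{\Sigma^1(F)}\to N(\Sigma^1(F),D)$ and work with $w_2$ of $\operatorname{Hom}(\Ker(dF),N(\Sigma^1(F),D))$ rel boundary, checking that the twisting by $w_1(\Ker(dF))=w_1(T\Sigma^1(F))$ cancels mod $2$ and that the mod $2$ relative class is still computed by the pushoff linking number for a possibly non-orientable surface — but that is a substantively different argument which your proposal neither states nor proves. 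Two smaller unverified points: that a cusp is a \emph{transverse} (index $\pm1$) zero of $v_N$ requires the local normal form of the $\Sigma^{1,1}$ germ, and the comparison of the $\xi(F)$-sign with the normal-bundle index is asserted rather than checked (though mod $2$ only oddness of the index is needed).

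For comparison, the paper avoids the extension altogether: it takes a generic homotopy from the standard embedding to $f$, crossing the discriminant transversely in its top stratum, and analyzes the five Houston--Kirk codimension-one local transitions; the total self-linking number jumps by $\pm1$ exactly at the transitions where the track $H$ has a cusp, and $\kappa(f)$ is (up to sign) the number of cusps of the track, giving the parity statement directly. Your route, if the orientability/triviality issue were handled, would give a pleasant static interpretation of $\kappa \bmod 2$ as a relative normal Euler number, but as written the key hypothesis is unproved and false in general.
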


Recall that a \emph{$k$-parameter deformation} of a map
$g:\sthree\to\rfour$ is a smooth family of maps
$g_\lambda:\sthree\to\rfour$, parameterized by the elements
$\lambda$ in an open neighborhood of $0\in\R^k$, such that
$g_0=g$. The deformation is called \emph{versal} if every
deformation of $g$ is equivalent, up to left-right action of
diffeomorphisms, to a deformation induced from $g_\lambda$.

\begin{proof}[Proof of proposition \ref{slk}]
The complement $\Delta$ in $C^\infty(\sthree,\rfour)$ of the
subspace of locally stable maps is called \emph{the discriminant}. The
discriminant is stratified,
$\Delta=\Delta^1\cup\Delta^2\cup\dots\cup\Delta^\infty$,
where each $\Delta^k$, for $k<\infty$, is a submanifold of
codimension $k$ in $C^\infty(\sthree,\rfour)$, and $\Delta^k$ is
contained in the closure of $\Delta^1$ for every $k$.

Let $h_t$ be a generic homotopy from the standard embedding
$\sthree\to\rfour$ to $f$. Then the path in
$C^\infty(\sthree,\rfour)$ determined by $h_t$ passes the
discriminant only through its top stratum $\Delta^1$,
transversely. The finitely many $s\in I$ for which
$h_s\in\Delta^1$ we call \emph{the critical instances} for the
homotopy. When $t$ varies between two critical instances the maps
$h_t$ are locally stable, and the total self linking number
$\sharp\slk\Sigma(h_t)$, when regarded a function of $t$, is
constant. Here we assume that $\sharp\slk\Sigma(h_t)=0$ if $h_t$
is an immersion.

Define the jump $J(s)$ in the total self linking number of
$h_t$ at a critical instance $s$ to be the difference
$J(s)=\sharp\slk\Sigma(h_{s+\epsilon})-\sharp\slk
\Sigma(h_{s-\epsilon})$, where $\epsilon>0$ is so small that $s$
is the only critical instance for $h_t$ in the interval
$s-\epsilon\leq t\leq s+\epsilon$. If $s_1<s_2<\dots<s_r$
are the critical instances for $h_t$, then
\begin{equation*}
\sharp\slk\Sigma(f)=\sum_{i=1}^rJ(s_i).
\end{equation*}
We show that $\sum_{i=1}^r J(s_i)$ is congruent to $\kappa(f)$
modulo $2$.

Normal forms for the maps in $\Delta^1$ were computed
by Houston and Kirk \cite{HuKi}. Namely,
for $h\in\Delta^1$ there is one exceptional point
$q\in\sthree$, and local coordinates centered at $q$ and $h(q)$
such that $h$, when expressed in these coordinates, is given
by one of the following five equations:
\begin{align}
h(x,y,z) &= (x,y,z^2,z(z^2+x^2+y^2)),\label{a} \\
h(x,y,z) &= (x,y,z^2,z(z^2+x^2-y^2)),\label{b} \\
h(x,y,z) &= (x,y,z^2,z(z^2-x^2+y^2)),\label{c} \\
h(x,y,z) &= (x,y,z^2,z(z^2-x^2-y^2)),\label{d} \\
h(x,y,z) &= (x,y,xz+z^4,yz+z^3).\label{e}
\end{align}
For each $p\in\sthree\setminus\{q\}$ there are the usual local coordinates
centered at $p$ and $h(p)$ respectively, in which $h$ is given by
one of the following two equations
\begin{equation*}
h(x,y,z) = (x,y,z,0) \quad\text{or}\quad h(x,y,z) = (x,y,z^2,yz).
\end{equation*}

Assume $s$ is a critical instance for the homotopy $h_t$.
The deformation of $h_{s}$ determined by $h_t$ is
equivalent to a versal $1$-parameter deformation $v_\lambda$ of
$h_s$. Let $q\in\sthree$ be the exceptional point for $h_s$.
Then $v_\lambda$ can be assumed constant in $\lambda$ outside small
coordinate neighborhoods centered at $q$ and $h_s(q)$ respectively, and,
when expressed in these coordinates,
to be given by expression \eqref{i}, \dots ,\eqref{v} provided that
$q$ is an exceptional point as in \eqref{a}, \dots ,\eqref{e} respectively.
Note that $J(s)$, or $-J(s)$, equals the jump in the total self linking
number of the versal deformation at $\lambda=0$.
\begin{align}
v_\lambda(x,y,z) &= (x,y,z^2,z(z^2+x^2+y^2+\lambda)),\label{i} \\
v_\lambda(x,y,z) &= (x,y,z^2,z(z^2+x^2-y^2+\lambda)),\label{ii} \\
v_\lambda(x,y,z) &= (x,y,z^2,z(z^2-x^2+y^2+\lambda)),\label{iii} \\
v_\lambda(x,y,z) &= (x,y,z^2,z(z^2-x^2-y^2+\lambda)),\label{iv} \\
v_\lambda(x,y,z) &= (x,y,xz+\lambda z^2+z^4,yz+z^3),\label{v}
\end{align}
\begin{figure}[t]
\begin{center}
\includegraphics[width=0.9\textwidth]{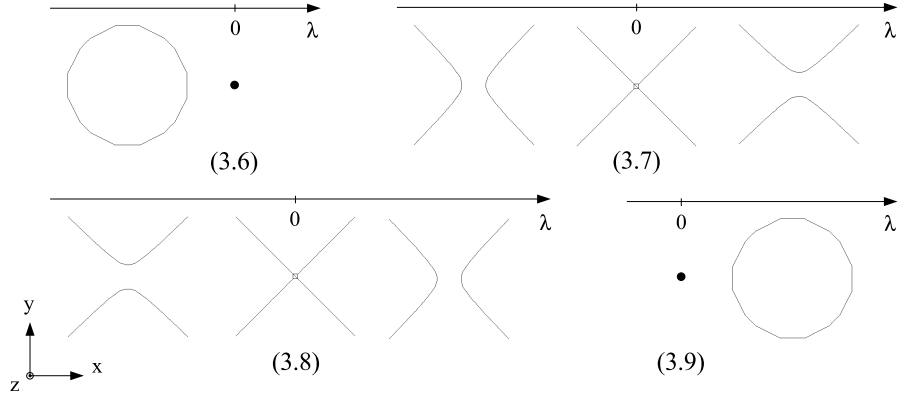}
\caption{Bifurcation diagrams for versal deformations (3.6)--(3.9).}
\label{fig1}
\end{center}
\end{figure}
\indent Figure \ref{fig1} contains bifurcation diagrams for the
singularity link under the versal deformations \eqref{i}--\eqref{iv}.
The kernel bundle for the differential $dv_\lambda$ is
everywhere parallel to the z-axis (that points towards the
observer). Deformations \eqref{i} and \eqref{iv} are responsible
for the vanishing and creation, and deformations \eqref{ii} and
\eqref{iii} for the fusion and splitting, of components of the
singularity link. None of these modifications of the
singularity link affects the total self linking number.

Figure \ref{fig2} contains a bifurcation diagram for the
singularity link under the versal deformation \eqref{v}. Outside
the box, decorated with the letter $n$, the kernel bundle of
$dv_\lambda$ is parallel to the z-axis. Inside the box the kernel
bundle is assumed to make $n$ twists about the displayed component
of the singularity link. Note that the fiber of the kernel bundle
of $dv_0$ over the exceptional point $q$ is tangential to the
singularity link. Thus $v_0$ has a cusp at $q$, that is, a
singularity of type $\Sigma^{1,1}$. As the diagram indicates, the
jump in the total self linking number of $v_\lambda$ at
$\lambda=0$ is $\pm 1$.

Let $H:\sthree\times I\to\rfour\times I$ be the track of the
homotopy $h_t$. Then $H$ is locally stable since $h_t$ is generic.
Moreover, by Proposition \ref{jojje}, $-\kappa(f)$ is the
algebraic number of cusps for $H$. Now $(q,s)\in\sthree\times I$
is a cusp for $H$ if and only if $s$ is a critical instance for
$h_t$ and $q$ is an exceptional point as in \eqref{e} for $h_{s}$. Thus,
if $H$ has $r$ cusps, at critical instances $s_1,s_2,\dots,s_r$,
then $J(s_i)=\pm 1$ for $i=1,\dots,r$, and $\kappa(f)\equiv
r\equiv\sum_{k=1}^rJ(s_i)$ modulo 2. Hence $\kappa(f)$ and
$\sharp\slk\Sigma(f)$ have the same parity.
\end{proof}
\begin{figure}[t]
\begin{center}
\includegraphics[width=0.9\textwidth]{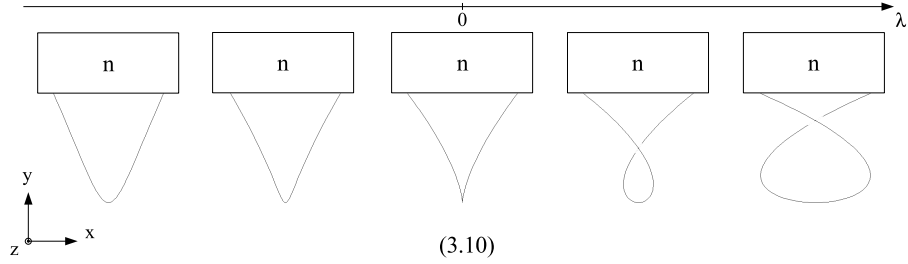}
\caption{Bifurcation diagram for versal deformation (3.10).}
\label{fig2}
\end{center}
\end{figure}
\begin{remark}
An alternative proof of Proposition \ref{slk} can be obtained from the 
proof of \cite[Proposition 8]{Juh2}.
\end{remark}
 
\subsection{The connected sum of locally stable maps.}
\label{conn_sum} 
Kervaire defined a connected sum operation
for immersions $S^k\to\R^n$, see \cite{Ker} for the explicit
construction. This operation can be analogously defined for
locally stable maps $S^k\to\R^n$, when $k<n$. We denote the
connected sum of two locally stable maps $f,g:S^k\to\R^n$ by
$f\natural g:S^k\to\R^n$. The connected sum $f\natural g$ is a locally stable
map which, since the singularity sets of $f$ and $g$ have at
least codimension 2 in $S^k$, is unique up to \lsh.

\begin{proposition}
If $f,g:\sthree\to\rfour$ are locally stable maps such that $f$ is
singular and $g$ is an immersion, then $f\natural g$ is singular
and $\sigma(f\natural g)=\sigma(f)$.
\end{proposition}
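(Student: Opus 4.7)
The plan is to exploit the fact that the singularity link $\Sigma(f)$ is $1$-dimensional in $\sthree$, hence of codimension $2$, so the connected sum construction can be performed entirely away from $\Sigma(f)$, leaving the singular behavior of $f$ untouched.

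More precisely, I would first recall Kervaire's construction adapted to locally stable maps. To form $f\natural g$ we choose points $p\in \sthree\setminus\Sigma(f)$ and $q\in\sthree\setminus\Sigma(g)=\sthree$, small closed balls $B_p, B_q$ about these points (with $B_p$ disjoint from $\Sigma(f)$, which is possible since $\Sigma(f)$ has codimension two), and a thin tube in $\rfour$ joining $f(B_p)$ and $g(B_q)$ along which the immersions $f|_{\sthree\setminus \operatorname{int}B_p}$ and $g|_{\sthree\setminus \operatorname{int}B_q}$ are glued to produce a locally stable map from the connected sum $\sthree\#\sthree\cong\sthree$ into $\rfour$. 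Since the construction modifies $f$ only inside $B_p$, we have that the resulting map $f\natural g$ agrees with $f$ on an open neighborhood of $\Sigma(f)$ in its domain.

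From this, the two conclusions are immediate. First, the singularity link of $f\natural g$ contains (a copy of) $\Sigma(f)$, so $f\natural g$ is singular. Second, since $g$ is an immersion, the connected-sum construction introduces no new singularities: the tube and the (reparameterized) image of $g$ contribute only immersion points, because local stability is preserved by the Kervaire gluing and the only places where the corank of $d(f\natural g)$ can drop are those where either $df$ or $dg$ has corank drop, and the latter is empty. Hence $\Sigma(f\natural g)$ equals $\Sigma(f)$ as a subset of $\sthree$. Moreover the kernel bundle $\Ker(d(f\natural g))|_{\Sigma(f\natural g)}$ coincides with $\Ker(df)|_{\Sigma(f)}$ since $f\natural g=f$ near $\Sigma(f)$, so the framing (up to the homotopy of sections used in Definition \ref{def_of_sigma}) is the same. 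Therefore $\sigma(f\natural g)=\sigma(f)$.

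The only step that requires a modicum of care is verifying that the Kervaire gluing really does yield a locally stable map; this follows because local stability is a local property and away from the gluing region we have not touched $f$ or $g$, while inside the gluing region the tube can be chosen to be a standard embedded $S^2\times[0,1]$, which is an immersion into $\rfour$ in general position with the rest of the image. As this is exactly the argument that makes $f\natural g$ well-defined up to \lsh\ in the first place, there is no real obstacle.
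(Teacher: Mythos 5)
Your proposal is correct and is exactly the argument the paper leaves implicit: the paper's proof is simply ``immediate from the definition of the connected sum,'' and your spelled-out version --- perform the Kervaire gluing away from the codimension-two link $\Sigma(f)$, so that $f\natural g$ agrees with $f$ near $\Sigma(f)$ and acquires no new singularities from the immersion $g$ or the tube, whence the singularity link and its kernel framing are unchanged --- is precisely what makes it immediate. No gap; you have just made the paper's one-line proof explicit.
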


\begin{proof}
This is immediate from the definition of the connected sum.
\end{proof}

\begin{proposition}
\label{lsnu}
If $f,g:\sthree\to\rfour$ are locally stable maps, then
$\nu(f\natural g)=\nu(f)+\nu(g)-1$.
\end{proposition}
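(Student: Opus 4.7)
I would prove the formula by a direct chain-level computation using a standardized connected-sum construction.

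Begin by arranging the construction carefully. Choose embedded $3$-disks $D_f\subset\sthree\setminus\Sigma(f)$ and $D_g\subset\sthree\setminus\Sigma(g)$ and, after pre- and post-composing with suitable diffeomorphisms, assume that both $f|_{D_f}$ and $g|_{D_g}$ are the standard flat inclusion $D^3\hookrightarrow\rthree\subset\rfour$. Form $h=f\natural g$ by removing slightly smaller open disks, gluing the domains along $S^2\times I$, and joining the images in $\rfour$ by a matching standard tube, requiring that the resulting map coincide with a fixed local model on a neighborhood of the tube, independently of $f$ and $g$. Since $\Sigma(h)=\Sigma(f)\sqcup\Sigma(g)$ as framed links and the disk-bundle construction of Definition~\ref{def_of_nu} is local on $\Sigma(h)$, one reads off immediately $d_h=d_f+d_g$.

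For the Gauss-map chain, the key observation is that since $f|_{D_f}$ is a flat embedding, $n_f|_{D_f}$ is a constant map, and so $(n_f)_*[D_f]=0$ as a $3$-chain; similarly $(n_g)_*[D_g]=0$. Because $h$ agrees with $f$ on the complement of $D_f$ in the $f$-half and with $g$ on the complement of $D_g$ in the $g$-half, we obtain a decomposition
\[ m_h = m_f + m_g + T, \]
where $T$ is the push-forward under the Gauss map of the fundamental class of the tube region (together with its smooth attachment collars). By construction $T$ depends only on the standard tube model, not on $f$ or $g$.

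To identify $\hv T\hh\in H_3(\sthree;\Z)$, I would apply the identity above to the special case $f=g=\iota$, where $\iota\colon\sthree\hookrightarrow\rfour$ is the standard embedding. Then $\nu(\iota)=1$, and $\iota\natural\iota$ is a smooth embedding of $\sthree$ into $\rfour$ bounding a contractible region (two $4$-balls joined by a $4$-dimensional tube). For such a closed embedded hypersurface in $\rfour$ the Gauss map degree equals the Euler characteristic of the bounded region, namely $1$, so $\nu(\iota\natural\iota)=1$. The decomposition then forces $\hv T\hh=-\hv\sthree\hh$, and combining with $d_h=d_f+d_g$ yields $\hv m_h+d_h\hh=(\nu(f)+\nu(g)-1)\hv\sthree\hh$, which is the claim. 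The main technical obstacle is making the chain-level decomposition $m_h=m_f+m_g+T$ rigorous across the smooth interface between the flat disks and the tube, so that $T$ is genuinely independent of $f$ and $g$; this is handled by the uniform standardization of $h$ near the gluing collars.
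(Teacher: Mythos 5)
Your argument is correct in substance, but note that it is not what the paper does: the paper's proof of Proposition \ref{lsnu} is a one-line reduction, citing Milnor \cite{Mil} for the formula $\nu(f\natural g)=\nu(f)+\nu(g)-1$ for immersions and asserting that an analogous proof works for locally stable maps, whereas you supply a self-contained chain-level argument. The two points you make explicit are exactly what the paper leaves implicit: the disk-bundle term of Definition \ref{def_of_nu} is local along the singularity link and is untouched by a connected sum performed away from $\Sigma(f)\sqcup\Sigma(g)$, so $d_h=d_f+d_g$; and the Gauss-map contribution of the standardized tube is a universal correction, which you pin down by evaluating on $\iota\natural\iota$ instead of computing it directly as Milnor does --- a clean normalization trick. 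A few small points to tighten, none of them gaps: the push-forward of a flat disk under its constant Gauss map is a degenerate chain supported at a point, not literally zero, and $T$ alone is not a cycle, so the precise statement should be that the correction cycle formed by the tube chain together with the two point-supported disk chains is independent of $f$ and $g$, giving $\nu(f\natural g)=\nu(f)+\nu(g)+t$ for a universal integer $t$ (equivalently, count preimages of a generic regular value avoiding the two constant normal directions); the value $\nu(\iota\natural\iota)=1$ follows at once from the fact that $\iota\natural\iota$ is isotopic to the standard embedding, so you can sidestep the Euler-characteristic formula for the Gauss map of an embedded hypersurface and its sign conventions; and you should record that the preliminary flattening and repositioning of $f$ and $g$ are effected by ambient isotopies of $\sthree$ and $\rfour$, hence by homotopies through locally stable maps, so they do not change $\nu$. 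With those remarks your argument is a complete proof, and it has the merit of making the ``analogous to Milnor'' step explicit, including the only genuinely new feature in the singular case, namely the locality of the $d$-term.
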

\begin{proof}
Milnor \cite{Mil} proved that for immersions $f,g:\sthree\to\rfour$
the normal degree of $f\natural g$ is $\nu(f\natural g)=\nu(f)+\nu(g)-1$.  A proof analogous
to the proof of Milnor's shows that the same formula holds for the
normal degree of a connected sum of two locally stable maps.
\end{proof}

\begin{proposition}
\label{lskappa}
If $f,g:\sthree\to\rfour$ are locally stable maps, then
$\kappa(f\natural g)=\kappa(f)+\kappa(g)$.
\end{proposition}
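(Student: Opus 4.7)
The plan is to construct a single admissible extension of $i\circ(f\natural g)$ by gluing together admissible extensions of $i\circ f$ and $i\circ g$, in such a way that the connecting region contributes no cusps and the existing cusps are carried over with their signs intact.

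First, I would choose admissible extensions $F\colon D_1\to\rfive$ of $i\circ f$ and $G\colon D_2\to\rfive$ of $i\circ g$, where $D_1$ and $D_2$ are disjoint $4$-disks in the upper half space $\rfive_+$. By Definition \ref{def_of_kappa}, $\kappa(f)$ and $\kappa(g)$ are the algebraic cusp numbers of $F$ and $G$ respectively. Recall from Kervaire's construction of $\natural$ that the connected sum is performed by choosing points $p\in\sthree$ (in the domain of $f$) and $q\in\sthree$ (in the domain of $g$) at which $f$ and $g$ are embeddings in the usual sense, excising small open balls $B_p$ and $B_q$ around them, and joining the images by a thin standard tube in $\rfour$. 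Because $\Sigma(f)$ and $\Sigma(g)$ are $1$-dimensional and their image double-point loci are $2$-dimensional, such $p$ and $q$ can be chosen freely.

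Next I would build the extension $F\natural G$ by performing the analogous operation in one dimension higher. Let $D=D_1\natural_\partial D_2$ be the boundary connected sum, obtained by excising half-balls $\hat B_p\subset D_1$ and $\hat B_q\subset D_2$ whose equators are $B_p$ and $B_q$ on $\sthree$, and gluing in a $4$-dimensional tube $T\cong D^3\times[0,1]$. Then $D$ is again a $4$-disk with boundary $\sthree=\sthree\natural\sthree$. On $D_1\setminus\hat B_p$ and $D_2\setminus\hat B_q$ set the new extension to agree with $F$ and $G$; on the tube $T$ define the extension to be a standard embedding into $\rfive_+$ that connects the images of $F$ and $G$ along the connecting tube of $f\natural g$. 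This is possible because $F$ and $G$ are embeddings near $\hat B_p$ and $\hat B_q$ (a product of the local embedding structure of $f$, $g$ with the collar direction), so the prescribed boundary data on $T$ bounds an embedded disk bundle in $\rfive_+$. Choosing everything to respect collars yields an admissible extension $F\natural G\colon D\to\rfive$ of $i\circ(f\natural g)$.

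The key observation is that on $T$ the map $F\natural G$ has been arranged to be an embedding, hence has no $\Sigma^{1,1}$-singularities at all. Therefore $\Sigma^{1,1}(F\natural G)=\Sigma^{1,1}(F)\sqcup\Sigma^{1,1}(G)$, and since the orientation of a cusp depends only on local data (the cokernel bundle, the kernel bundle, and the second differential at that point, as described in the remark following Definition \ref{def_of_kappa}), the sign of each original cusp is preserved. By Proposition \ref{jojje} we may compute $\kappa(f\natural g)$ from this particular admissible extension, and we obtain
\begin{equation*}
\kappa(f\natural g)=\#\Sigma^{1,1}(F\natural G)=\#\Sigma^{1,1}(F)+\#\Sigma^{1,1}(G)=\kappa(f)+\kappa(g).
\end{equation*}

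The main obstacle is verifying that the extension across the tube $T$ can indeed be carried out as a locally stable map (in fact as an embedding) compatibly with the admissibility collar condition and with the previously chosen extensions $F$ and $G$; this is a standard but slightly fiddly exercise in relative differential topology near the equators of $\hat B_p$ and $\hat B_q$, where $F$ and $G$ already have a product form coming from their own collar conditions.
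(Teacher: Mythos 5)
Your proposal is correct and is essentially the paper's own argument: build an admissible extension of $i\circ(f\natural g)$ by joining admissible extensions of $i\circ f$ and $i\circ g$ with an embedded (hence cusp-free) tube, then use that cusps and their signs are local data, so the algebraic cusp counts add. The only difference is that the paper dispatches the ``fiddly'' collar-compatibility step you flag by a tidy device: it first extends $i\circ f$ and $i\circ g$ to locally stable maps $F,G$ of a whole $4$-sphere $N$ that are products $(x,t)\mapsto(i(f(x)),t)$ on a collar $\sthree\times(-1,1)$ of the equator, forms Kervaire's connected sum of these sphere maps by a straight tube from $F(p)$ to $G(q)$ lying in that product region, and then restricts to the upper hemisphere $N_+$, so that the resulting map is automatically an admissible extension of $i\circ(f\natural g)$.
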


\begin{proof}
Let $p,q\in\sthree$ be such that $f(p)\ne g(p)$ and such that $f$
and $g$ are embeddings when restricted to small neighborhoods of
$p$ and $q$. Normalize $f$ at $f(p)$ and $g$ at $g(q)$, c.f.
\cite[Equations (2.1)]{Ker}.

Let $N$ be the 4-sphere in $\rfive$ obtained by gluing hemispheres
to the top and bottom of the cylinder $\sthree\times(-2,2)$.
Extend $i\circ f$ and $i\circ g$ to locally stable maps
$F,G:N\to\rfive$ such that $F(x,t)=(i(f(x)),t)$ and
$G(x,t)=(i(g(x)),t)$ for $(x,t)\in\sthree\times(-1,1)$. Then $F$
and $G$, when restricted to $N_+=N\cap\rfive_+$, are admissible
extensions of $i\circ f$ and $i\circ g$ respectively.

Let $H:\sfour\to\rfive$ be the connected sum of $F$ and $G$
obtained by ``joining the images $F(N)$ and $G(N)$ by a tube from
$F(p)$ to $G(q)$ with axis $F(p)+t(g(q)-f(p))$'', c.f. \cite[page 123]{Ker}.
Then $H$ is locally stable, and $H|_{N_+}:N_+\to\rfive$
is an admissible extension of $i\circ f\natural g$. The algebraic
number of cusps of $H|_{N_+}$ equals the sum of the algebraic
number of cusps of $F|_{N_+}$ and $G|_{N_+}$. Hence
$\kappa(f\natural g)=\kappa(f)+\kappa(g)$.
\end{proof}

\subsection{Immersions}
\label{immersions}
Smale \cite{Sma} classified immersions of spheres in Euclidean
spaces. Associated to each immersion $f:S^k\to\R^n$ is its Smale
invariant $\Omega_{n,k}(f)\in\pi_k(V_{n,k})$, where $V_{n,k}$ is
the Stiefel manifold of $k$-frames in $\R^n$, and two immersions
$f,g:S^k\to\R^{n}$ are regularly homotopic if and only if
$\Omega_{n,k}(f)=\Omega_{n,k}(g)$. Moreover, each element of
$\pi_k(V_{n,k})$ is the Smale invariant of an immersion
$S^k\to\R^n$. Thus the Smale invariant establish a one-one
correspondence between the set of regular homotopy classes of
immersions $S^k\to\R^n$ and $\pi_k(V_{n,k})$. Kervaire \cite{Ker}
proved that the set of regular homotopy classes of immersions
$S^k\to\R^n$ is an abelian group under connected sum, and that the
Smale invariant establish an isomorphism from this group to
$\pi_k(V_{n,k})$:

\begin{proposition}
If $f,g:S^k\to\R^n$ are immersions, then $\Omega_{n,k}(f\natural
g)=\Omega_{n,k}(f)+\Omega_{n,k}(g)$.
\end{proposition}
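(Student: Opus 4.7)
The plan is to unpack the Smale--Hirsch definition of $\Omega_{n,k}$ and reduce additivity to the standard pinch-map description of the group structure on $\pi_k(V_{n,k})$. Recall that $\Omega_{n,k}(f)$ can be described as follows: after a regular homotopy we may assume that $f$ coincides with the standard linear embedding $S^k\hookrightarrow\R^{k+1}\subset\R^n$ on a small closed disk $D$ about a chosen basepoint $\ast\in S^k$. Using the canonical trivialization of $T\R^n=\R^n\times\R^n$ together with a fixed trivialization of $TS^k$ on $S^k\setminus\{\ast\}$, the differential $df$ defines a map $\widetilde f\colon S^k\to V_{n,k}$ which is constant on $D$. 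Its based homotopy class is $\Omega_{n,k}(f)$.

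First I would normalize both $f$ and $g$ at basepoints $p,q\in S^k$ so that they agree with the standard linear embedding on small disks $D_p$ and $D_q$, and then, following Kervaire, arrange by an isometry of $\R^n$ that $f(p)$ and $g(q)$ lie at the two ends of the joining tube. The connected sum $f\natural g$ is then obtained by removing the interiors of smaller disks $D_p'\subset D_p$ and $D_q'\subset D_q$ and connecting the resulting $(k-1)$-spheres by a standard embedded annulus $T\cong S^{k-1}\times I$, so that $f\natural g$ restricts to a standard linear embedding on the whole of $T$.

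Second, I would decompose the domain of $f\natural g$ as $(S^k\setminus D_p')\cup T\cup(S^k\setminus D_q')$. On the middle cylinder $T$ the differential of $f\natural g$ is constant (equal to the standard frame), so the induced map $\widetilde{f\natural g}\colon S^k\to V_{n,k}$ is constant on $T$. Collapsing $T$ to a point identifies the domain with $S^k\vee S^k$, the two summands being canonically identified with the domains of $f$ and $g$. Under this identification $\widetilde{f\natural g}$ factors as
\[
S^k\;\xrightarrow{\;\mathrm{pinch}\;}\;S^k\vee S^k\;\xrightarrow{\;\widetilde f\vee\widetilde g\;}\;V_{n,k},
\]
and since the pinch map defines the addition on $\pi_k(V_{n,k})$, the identity $\Omega_{n,k}(f\natural g)=\Omega_{n,k}(f)+\Omega_{n,k}(g)$ follows at once.

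The step I expect to require the most care is the bookkeeping for the tangent-bundle trivializations: one has to verify that the fixed frames on $TS^k$ away from $p$ and $q$ glue, across the cylinder $T$, to a global frame on the connected-sum sphere that is compatible with the trivialization used to define $\widetilde{f\natural g}$. This is precisely why one must normalize $f$ and $g$ to be the \emph{same} standard linear embedding near the basepoints; once this compatibility is in place, the factorization through the pinch map is immediate and the proposition follows.
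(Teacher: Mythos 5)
The paper itself only quotes this proposition from Kervaire, so your sketch is being compared with the standard argument; it follows the right general line (normalize at the basepoints, join by a standard tube, pinch), but two of its load-bearing claims fail as stated. First, the description of $\Omega_{n,k}(f)$ you start from is not available in general: with a trivialization of $TS^k$ over $S^k\setminus\{\ast\}$, the frame map $\widetilde f$ is undefined at $\ast$, and it can be made constant near $\ast$ only if the trivializing frame is parallel (in flat coordinates around $\ast$) there; such a frame would extend over $\ast$ and trivialize $TS^k$, which is possible only for $k=1,3,7$. (Also, the round standard embedding has nowhere locally constant differential, so you must flatten $f$ near $\ast$, not merely make it ``standard linear''.) The honest definition of the Smale invariant is a relative, or difference, class: one compares $df$ with the differential of the reference embedding over the disk $S^k\setminus\operatorname{int}D$, where the two agree near the boundary, and the glued map $S^k\to V_{n,k}$ represents $\Omega_{n,k}(f)$. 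The ``bookkeeping'' you defer at the end is exactly this point, and it is where the content of the proposition lies.

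Second, the differential of $f\natural g$ is not constant on the joining tube $T$: Kervaire's tube is a cylinder over $S^{k-1}$, whose tangent $k$-planes and normal directions vary over the $S^{k-1}$-factor, so $\widetilde{f\natural g}$ does not factor through the pinch map as claimed. This is not a removable technicality: were your factorization correct, the same argument applied to the Gauss map would give $\nu(f\natural g)=\nu(f)+\nu(g)$ for immersions $S^3\to\R^4$, contradicting Milnor's formula $\nu(f\natural g)=\nu(f)+\nu(g)-1$ (Proposition \ref{lsnu}); likewise, with the global quaternionic framing of $S^3$ the absolute frame-map class satisfies $[\Phi_f]=\Omega_{4,3}(f)+[\sigma]$ (Lemma \ref{immnormal}), so that class is not additive under $\natural$ --- only the Smale invariant is. Exact additivity holds precisely because $\Omega_{n,k}$ is a difference class relative to the standard embedding: taking $e\natural e$ (regularly homotopic to $e$) as the reference for $f\natural g$, the difference class is supported in two disjoint disks, where it restricts to the difference classes of $f$ and of $g$, and disjointness of the supports gives additivity, the tube and basepoint regions contributing nothing because there $f\natural g$ agrees with the reference. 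Recast your pinch argument in this relative form and it becomes a correct proof; as written, it cannot distinguish $\Omega_{n,k}$ from the non-additive absolute class.
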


The group of immersions of $\sthree\to\rfour$ is isomorphic to
$\pi_3(V_{4,3})$. The space $V_{4,3}$ is homotopy equivalent to
$SO(4)$. A homotopy equivalence is obtained by adding a fourth
vector to each 3-frame to make it a positively oriented 4-frame,
and then apply the Gram-Schmidt process. We identify
$\pi_3(V_{4,3})$ with $\pi_3(SO(4))$ via the induced isomorphism.

The group $\pi_3(SO(4))$ is isomorphic to $\Z\oplus\Z$. To define
generators for $\pi_3(SO(4))$ we identify $\rfour$ with the
quaternions, $\sthree$ with the unit quaternions, $\rthree$ with
the pure quaternions, and define $\varrho:\sthree\to SO(3)$ by
$\varrho(x)y=x\cdot y\cdot x^{-1}$ for $x\in\sthree$, $y\in\rthree$. 
Then the homotopy classes
$[\sigma]$ and $[\rho]$ of $\sigma,\rho:\sthree\to SO(4)$ defined
by
\begin{equation*}
\sigma(x)y=x\cdot y \quad\text{and}\quad\rho(x)=\left[\begin{array}{cc} 1 & \\
                                       &
                                       \varrho(x)\end{array}\right]\quad (x\in\sthree,\,y\in\rfour)
\end{equation*}
respectively, generate $\pi_3(SO(4))$, see \cite[Chapter 8,
Proposition 12.9]{Hus}. Immersions $\sthree\to\rfour$ whose Smale
invariants equal $[\sigma]$ and $[\rho]$ were constructed by
Hughes \cite{Hug}.

\begin{lemma}
\label{immnormal}
If $f:\sthree\to\rfour$ is an immersion, and
$\Omega_{4,3}(f)=m[\sigma]+n[\rho]$, then $\nu(f)=m+1$.
\end{lemma}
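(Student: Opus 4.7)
The plan is to identify $\nu(f)$ for an immersion $f$ with the degree of its Gauss map $n_f\colon\sthree\to\sthree$, and then to read this degree off the Smale invariant via the projection $p\colon SO(4)\to\sthree$, $A\mapsto A(e_0)$.

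First, since $\Sigma(f)=\emptyset$ for an immersion, Definition \ref{def_of_nu} simplifies: the disk bundle $D$ is empty and $M$ is the graph of the Gauss map $n_f$ sending $x$ to the unit normal of $f(\sthree)$ at $f(x)$. Hence $\nu(f)=\deg n_f$.

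Next I would lift $n_f$ to a map $\hat f\colon\sthree\to SO(4)$ as follows. Identifying $\sthree$ with the unit quaternions and using the left-invariant framing $X_i(x)=xe_i$ ($e_0=1,\,e_1=i,\,e_2=j,\,e_3=k$) of $T\sthree$, let $\hat f(x)$ be the positively oriented orthonormal $4$-frame obtained from $(n_f(x),df_xX_1,df_xX_2,df_xX_3)$ by Gram-Schmidt orthonormalization of the tangent vectors. By construction $n_f=p\circ\hat f$, so $\nu(f)=\deg n_f=p_*[\hat f]\in\pi_3(\sthree)=\Z$. For the standard embedding one checks directly that $\hat\iota(x)=L_x=\sigma(x)$, so $[\hat\iota]=[\sigma]$. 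Since the Smale invariant is normalized so that $\Omega_{4,3}(\iota)=0$, the raw class $[\hat f]$ differs from $\Omega_{4,3}(f)$ by the constant shift $[\sigma]$, yielding $[\hat f]=\Omega_{4,3}(f)+[\sigma]$ in $\pi_3(SO(4))$.

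It remains to evaluate $p_*$ on the generators. From $\sigma(x)(e_0)=x$, the identity of $\sthree$, we get $p_*[\sigma]=1$; from $\rho(x)(e_0)=e_0$, the constant map, we get $p_*[\rho]=0$. Therefore, if $\Omega_{4,3}(f)=m[\sigma]+n[\rho]$,
\begin{equation*}
\nu(f)=p_*[\hat f]=p_*\bigl((m+1)[\sigma]+n[\rho]\bigr)=m+1.
\end{equation*}

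The main obstacle is verifying the shift $[\hat f]=\Omega_{4,3}(f)+[\sigma]$, that is, that the Smale invariant differs from the raw lift class $[\hat f]$ by the constant $[\sigma]$. This can be established by combining the additivity $\Omega_{4,3}(f\natural g)=\Omega_{4,3}(f)+\Omega_{4,3}(g)$ with the corresponding behavior of $[\hat{\,\cdot\,}]$ under connected sum. Alternatively one can sidestep this algebraic point: by Proposition \ref{lsnu}, $\nu-1$ descends to a homomorphism $\pi_3(V_{4,3})\to\Z$, so $\nu(f)=am+bn+1$ for integers $a,b$, and $a=1,\,b=0$ then follow by computing $\nu$ on the explicit $[\sigma]$- and $[\rho]$-immersions constructed by Hughes \cite{Hug}.
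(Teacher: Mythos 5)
Your argument is essentially the paper's: the frame $\hat f$ you build from $(n_f,df\,X_1,df\,X_2,df\,X_3)$ is exactly the map $\Phi$ in the paper, and the conclusion is read off by pushing forward along the first-column fibration $SO(4)\to\sthree$, with $p_*[\sigma]=1$, $p_*[\rho]=0$. The step you single out as the main obstacle, $[\hat f]=\Omega_{4,3}(f)+[\sigma]$, is precisely what the paper disposes of by citing \cite[Lemma 3.3.1]{Ekh2}; note that your normalization remark ($\Omega_{4,3}(\iota)=0$ and $[\hat\iota]=[\sigma]$) does not by itself prove the difference $[\hat f]-\Omega_{4,3}(f)$ is constant in $f$, which is the actual content of that lemma, so as written that justification is circular. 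Your fallback -- that $\nu-1$ is additive under connected sum by Proposition \ref{lsnu}, hence a homomorphism on $\pi_3(V_{4,3})$, pinned down by evaluating $\nu$ on Hughes' explicit $[\sigma]$- and $[\rho]$-immersions -- is a sound alternative, but those two normal-degree computations are real work left unexecuted.
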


\begin{proof}
For each $p\in\sthree$ let $Q(p)$ be the 3-frame $(p\cdot i,p\cdot
j,p\cdot k)$  translated to $T_p\sthree$. Further let $t(p)$ be
the 3-frame $df(Q(p))$ translated to the origin in $\rfour$, let
$n:\sthree\to\sthree$ be the Gauss map of $f$, and let $\Phi(p)$
be the 4-frame obtained by applying the Gram-Schmidt process to
$(n(p),t(p))$. Then, by \cite[Lemma 3.3.1]{Ekh2}, the
homotopy class of $\Phi:\sthree\to SO(4)$ equals $\Omega_{4,3}(f)+[\sigma]$.

Let $q:SO(4)\to\sthree$ be the fibration that sends each
matrix in $SO(4)$ to its first column vector. Then
$q_\ast[\sigma]$ generates $\pi_3(\sthree)$ and
\begin{equation*}
\nu(f)q_\ast[\sigma]=q_\ast[\Phi]=q_\ast(\Omega_{4,3}(f)+[\sigma])
=(m+1)q_\ast[\sigma]+nq_\ast[\rho]=(m+1)q_\ast[\sigma].
\end{equation*}
Accordingly $\nu(f)=m+1$.
\end{proof}

If $p:SO(5)\to V_{5,3}$ is the fibration that sends a matrix to
its last three column vectors, then
$p_\ast:\pi_3(SO(5))\to\pi_3(V_{5,3})$ is an isomorphism. We
identify $\pi_3(V_{5,3})$ with $\pi_3(SO(5))$ via $p_\ast$.
Moreover, if $j:SO(4)\to SO(5)$ is the inclusion
\begin{equation*}
j(A)=\left[\begin{array}{cc} 1 & \\ & A \end{array}\right],
\end{equation*}
then $j_\ast:\pi_3(SO(4))\to\pi_3(SO(5))$ is an epimorphism with
kernel generated by $2[\sigma]-[\rho]$, see \cite[Chapter 8,
Proposition 12.11]{Hus}.

Let $f:\sthree\to\rfour$ be an immersion and let
$i:\rfour\to\rfive$ be the standard inclusion.

\begin{lemma}
\label{omega}
$\Omega_{5,3}(i\circ f)=j_\ast(\Omega_{4,3}(f))$.
\end{lemma}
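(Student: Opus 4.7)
The plan is to mimic the proof of Lemma \ref{immnormal}, constructing an explicit lift $\Psi: S^3 \to SO(5)$ of the frame map of $i\circ f$ and showing that $\Psi = j \circ \Phi$, where $\Phi$ is the lift appearing in that proof. I first fix conventions so that the inclusion $i: \R^4 \hookrightarrow \R^5$ sends $\R^4$ onto the last four coordinates; with this choice the block matrix $j(A) = \mathrm{diag}(1, A)$ is the matrix incarnation of $i$, and the diagram
\begin{equation*}
\begin{array}{ccc}
SO(4) & \xrightarrow{\;j\;} & SO(5) \\
\downarrow & & \downarrow \\
V_{4,3} & \xrightarrow{\;i_V\;} & V_{5,3}
\end{array}
\end{equation*}
commutes, where the vertical arrows send a matrix to its last three columns and $i_V$ is induced by $i$. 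Passing to $\pi_3$, this shows that $(i_V)_*$ corresponds to $j_*$ under the identifications $\pi_3(V_{n,3}) \cong \pi_3(SO(n))$ of Subsection \ref{immersions}.

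Next I define $\Psi: S^3 \to SO(5)$ by applying Gram--Schmidt to the ordered tuple $(e_1, i(n(p)), d(i\circ f)_p(Q(p)))$, where $n$ is the Gauss map of $f$, $Q$ is the quaternionic frame on $S^3$ from the proof of Lemma \ref{immnormal}, and $e_1 = (1,0,0,0,0)$. Since $e_1$ is orthogonal to the image of $i$, Gram--Schmidt respects the block decomposition, yielding $\Psi(p) = j(\Phi(p))$ where $\Phi$ is as in the proof of Lemma \ref{immnormal}. Hence $[\Psi] = j_*[\Phi] = j_*(\Omega_{4,3}(f) + [\sigma]) = j_*(\Omega_{4,3}(f)) + j_*[\sigma]$.

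To conclude, the analog of \cite[Lemma 3.3.1]{Ekh2} for immersions into $\R^5$ gives $[\Psi] = \Omega_{5,3}(i\circ f) + c$ for a universal element $c \in \pi_3(SO(5))$ independent of $f$. Evaluating this at the standard embedding $f_0: S^3 \hookrightarrow \R^4$ (for which one computes directly that $\Phi = \sigma$ and hence $\Psi = j \circ \sigma$, while $\Omega_{5,3}(i\circ f_0) = 0$) pins down $c = j_*[\sigma]$. Cancelling $j_*[\sigma]$ from the two expressions for $[\Psi]$ yields $\Omega_{5,3}(i\circ f) = j_*(\Omega_{4,3}(f))$. The main obstacle is justifying the above analog of \cite[Lemma 3.3.1]{Ekh2}; I expect it to go through verbatim because the argument there uses only the general characterization of the Smale invariant by the frame map up to a universal basepoint constant.
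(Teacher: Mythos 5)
Your formal manipulations are fine: the block Gram--Schmidt observation that $\Psi=j\circ\Phi$ when $i(\rfour)$ is taken to be the last four coordinates, the commuting square identifying $(i_V)_\ast$ with $j_\ast$, and the evaluation at the standard embedding to pin down the constant are all correct (and the convention mismatch with the paper's ``standard inclusion'' is harmless, since the two block inclusions $SO(4)\to SO(5)$ are conjugate and conjugation acts trivially on $\pi_3$). However, the paper does not prove this lemma by such a computation at all: its proof is a citation of \cite[Lemma 3.3.3]{Ekh2}, which is verbatim the statement. Your argument instead pushes the entire content into the asserted ``analog of \cite[Lemma 3.3.1]{Ekh2} for immersions into $\rfive$'', namely that the class of the frame map equals $\Omega_{5,3}$ plus a universal constant, and that claim is a genuine gap: it is of essentially the same depth as the lemma you are proving, and it does not go through ``verbatim''.

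Two concrete points. First, for a general immersion $g:\sthree\to\rfive$ the normal bundle is a $2$-plane bundle with no canonical framing, so there is no canonical lift of the frame map to $SO(5)$; the claimed analog must be formulated for the map $\sthree\to V_{5,3}$ given by $p\mapsto dg(Q(p))$ and then transported through $p_\ast$ (your $\Psi$ is a lift only because $i\circ f$ carries the special normal framing $(e_1,i\circ n)$), so the $\rfour$ proof cannot simply be copied. Second, and more seriously, the ``general characterization'' you invoke --- that the frame-map class is a bijection onto $\pi_3(V_{5,3})$, additive under connected sum, with the standard embedding going to $0$ --- determines the Smale invariant only up to an automorphism of $\pi_3(V_{5,3})\cong\Z$, i.e.\ up to sign; identifying the frame-map bijection with $\Omega_{5,3}$ on the nose requires returning to the actual definition and conventions of the Smale invariant, which is exactly the computation performed in \cite{Ekh2}. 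So either cite \cite[Lemma 3.3.3]{Ekh2} as the paper does, or prove the $\rfive$ frame-map formula from the definition of $\Omega_{5,3}$; as written, the load-bearing step is assumed rather than established.
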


\begin{proof}
This is \cite[Lemma 3.3.3]{Ekh2}.
\end{proof}

\begin{lemma}
\label{immcusp}
$2\Omega_{5,3}(i\circ f)=\kappa(f)j_\ast[\sigma]$.
\end{lemma}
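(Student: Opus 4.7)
The plan is to derive the identity directly from the theorem of Ekholm and Sz\H{u}cs \cite{EkSz} that expresses the Smale invariant of an immersion $\sthree\to\rfive$ as half the signed count of cusps of any generic extension to the $4$-disk. In our setting $i\circ f\colon\sthree\to\rfive$ is an immersion, and an admissible extension $F\colon D\to\rfive$ as in Definition \ref{def_of_kappa} is precisely the kind of extension (locally stable, agreeing with the product on a collar, with only $\Sigma^{1,0}$ and $\Sigma^{1,1}$ singularities) on which the Ekholm--Sz\H{u}cs count is performed.

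First I would recall (from \cite{EkSz}) that for any immersion $g\colon\sthree\to\rfive$ and any such generic disk extension $G$, one has
\[
2\,\Omega_{5,3}(g)=\big(\#\Sigma^{1,1}(G)\big)\cdot j_\ast[\sigma],
\]
where $\#\Sigma^{1,1}(G)$ is the algebraic number of cusps and the generator on the right is fixed by the identification of $\pi_3(V_{5,3})\cong\pi_3(SO(5))$ made at the start of Subsection \ref{immersions}. Applying this to $g=i\circ f$ and $G=F$, the right-hand side becomes $\kappa(f)\cdot j_\ast[\sigma]$ by the very definition of $\kappa(f)$, and Proposition \ref{jojje} guarantees that the count is independent of the choice of admissible $F$. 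Combining, $2\,\Omega_{5,3}(i\circ f)=\kappa(f)\,j_\ast[\sigma]$.

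The only real obstacle is a bookkeeping check that the conventions of \cite{EkSz} agree with those of the present paper: the sign of a cusp in Definition \ref{def_of_kappa} (local intersection number at $q$ of the section $s(p)=d^2F(\pm v(p))$ with the zero section of the cokernel bundle) must coincide with the Ekholm--Sz\H{u}cs sign, and the generator $j_\ast[\sigma]\in\pi_3(V_{5,3})$ must coincide with the generator appearing in their formula. Both matches follow from the identification $\rfour\cong\mathbb H$, the explicit map $\sigma(x)y=x\cdot y$ used to generate $\pi_3(SO(4))$, and the outward-normal-first orientation convention of Subsection \ref{orient}; once this alignment of conventions is recorded, the lemma follows with no further computation.
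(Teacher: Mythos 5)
Your proposal is correct and is essentially the paper's own argument: the paper proves Lemma \ref{immcusp} by simply citing \cite[Theorem 1(a)]{EkSz}, which applied to the admissible disk extension $F$ of $i\circ f$ (where all correction terms present for general singular Seifert surfaces vanish) gives $2\Omega_{5,3}(i\circ f)=\kappa(f)j_\ast[\sigma]$. Your additional remarks on matching sign and generator conventions are a harmless elaboration of the same citation-based proof.
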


\begin{proof}
This is a consequence of \cite[Theorem 1(a)]{EkSz}.
\end{proof}

\begin{proposition}
\label{nu_kappa_det_imm}
Two immersions $f,g:\sthree\to\rfour$ are regularly homotopic if and
only if $\nu(f)=\nu(g)$ and $\kappa(f)=\kappa(g)$.
\end{proposition}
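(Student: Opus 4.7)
The plan is to reduce the proposition to Smale's theorem: $f$ and $g$ are regularly homotopic if and only if $\Omega_{4,3}(f)=\Omega_{4,3}(g)$ in $\pi_3(SO(4))\cong\Z\oplus\Z$. Thus it suffices to show that the pair $(\nu,\kappa)$ determines $\Omega_{4,3}$ on immersions (for the nontrivial direction), while the converse is immediate since regular homotopies are in particular \lshs, so Propositions \ref{knocke} and \ref{jojje} apply.

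First, write $\Omega_{4,3}(f)=m[\sigma]+n[\rho]$ with $m,n\in\Z$. By Lemma \ref{immnormal}, $m=\nu(f)-1$, so $m$ is determined by $\nu(f)$. Next I would extract $n$ from $\kappa(f)$. Applying Lemma \ref{omega} and then Lemma \ref{immcusp} gives
\begin{equation*}
\kappa(f)\,j_\ast[\sigma]=2\,\Omega_{5,3}(i\circ f)=2\,j_\ast\bigl(m[\sigma]+n[\rho]\bigr)=2m\,j_\ast[\sigma]+2n\,j_\ast[\rho].
\end{equation*}
Since $2[\sigma]-[\rho]$ generates $\ker j_\ast$, we have $j_\ast[\rho]=2\,j_\ast[\sigma]$, and hence
\begin{equation*}
\kappa(f)\,j_\ast[\sigma]=(2m+4n)\,j_\ast[\sigma].
\end{equation*}
The group $\pi_3(SO(5))\cong\Z$ is infinite cyclic and $j_\ast[\sigma]$ is a generator (it is the image under the surjection $j_\ast$ of a generator of a complement to $\ker j_\ast$), so this identity in $\pi_3(SO(5))$ forces the integer equality $\kappa(f)=2m+4n$. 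Substituting $m=\nu(f)-1$ yields
\begin{equation*}
n=\frac{\kappa(f)-2\nu(f)+2}{4},
\end{equation*}
so $n$ is determined by $\nu(f)$ and $\kappa(f)$.

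Consequently the pair $(\nu(f),\kappa(f))$ determines $\Omega_{4,3}(f)$. Therefore if $\nu(f)=\nu(g)$ and $\kappa(f)=\kappa(g)$, then $\Omega_{4,3}(f)=\Omega_{4,3}(g)$, and Smale's theorem gives a regular homotopy between $f$ and $g$. The only real point that requires care is the assertion that $j_\ast[\sigma]$ is a generator of $\pi_3(SO(5))$, but this follows at once from the fact that $j_\ast$ is surjective (Husemoller \cite[Ch.~8, Prop.~12.11]{Hus}) together with $j_\ast[\rho]=2j_\ast[\sigma]$: any element in the image is an integer combination of $j_\ast[\sigma]$ and $j_\ast[\rho]=2j_\ast[\sigma]$, hence a multiple of $j_\ast[\sigma]$. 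No part of the argument poses a serious obstacle; the computation is essentially a matter of unwinding the three lemmas in the right order.
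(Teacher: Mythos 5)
Your argument is correct and is essentially the paper's own proof: the paper also reduces to Smale's theorem and observes that Lemmas \ref{immnormal}, \ref{omega}, and \ref{immcusp} show $\Omega_{4,3}$ is determined by (and determines) the pair $(\nu,\kappa)$, which is exactly the computation $m=\nu(f)-1$, $\kappa(f)=2m+4n$ that you carry out explicitly. Your handling of the only delicate point --- that $j_\ast[\sigma]$ generates the torsion-free group $\pi_3(SO(5))\cong\Z$, so coefficients can be compared --- is also correct.
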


\begin{proof}
Lemmas \ref{immnormal}, \ref{omega}, and \ref{immcusp} imply that
$\Omega_{4,3}(f)=\Omega_{4,3}(g)$ if and only if $\nu(f)=\nu(g)$
and $\kappa(f)=\kappa(g)$.
\end{proof}

\section{Realization}
\label{realization}
In this section we prove that every framed link in $\sthree$ is the
framed singularity link of a locally stable map
into $\rfour$.

Let $V_3(T\sthree)$ be the bundle of 3-frames for $T\sthree$, and
let $Q:\sthree\to V_3(T\sthree)$ be an everywhere positively
oriented section. If $(u_1,u_2,u_3)$ is a 3-frame
at $p\in\sthree$, then we denote the matrix of coordinates for
$(u_1,u_2,u_3)$ when expressed in the basis $Q(p)$ for
$T_p\sthree$ by $[u_1,u_2,u_3]_Q$. Thus
$(u_1,u_2,u_3)=Q(p)[u_1,u_2,u_3]_Q$. Let $V_4(T\rfour)$ be the
bundle of 4-frames for $T\rfour$, and let $E:\rfour\to
V_4(T\rfour)$ be the section that takes each $q\in\rfour$ to the
standard basis for $T_q\rfour$. As above, if $(v_1,v_2,v_3,v_4)$
is a 4-frame at $q\in\rfour$, then we denote the
matrix of coordinates for $(v_1,v_2,v_3,v_4)$ when expressed in
the basis $E(q)$ for $T_q\rfour$ by $[v_1,v_2,v_3,v_4]_E$.

Let $V_3(T\rfour)$ be the bundle of 3-frames in
$T\rfour$, and let $GL^+(4)$ be the Lie group of real $4$-by-$4$
matrices with positive determinants. The map 
$\Psi:V_3(T\rfour)\to GL^+(4)$ defined by
$\Psi(v_1,v_2,v_3)=[v_1,v_2,v_3,v_1\times v_2\times v_3]_E$ is a
homotopy equivalence. Here, if $v_1,v_2,v_3\in T_q\rfour$, then
$v_1\times v_2\times v_3$ is the unique nonzero vector $w\in
T_q\rfour$ that is orthogonal to the linear span of $\{v_1,v_2,v_3\}$ and
satisfies $|w|^2=\det[v_1,v_2,v_3,w]_E$. Define
$\Xi:V_3(T\rfour)\to T\rfour$ by $\Xi(v_1,v_2,v_3)=v_1\times
v_2\times v_3$.

\begin{lemma}
\label{rel_h-prin}
Let $B$ be a proper tubular neighborhood of a link in $\sthree$. Let
$M=\sthree\setminus B$, and let $U$ be an open neighborhood of
$\partial M$ in $M$. Assume $g:U\to \rfour$ is an immersion, and
assume that the composition $\Psi\circ dg\circ Q|_U:U\to GL^+(4)$
extends to a continuous map $G:M\to GL^+(4)$. Then there exists an
immersion $f:M\to \rfour$ such that $G$ and $\Psi\circ df\circ Q|_M$
are homotopic relative $V$, where $V$ is an open neighborhood of
$\partial M$ contained in $U$. This, in turn, implies that $g$ and
$f|_U$ are regularly homotopic relative $V$. Accordingly
$g|_V=f|_V$, and hence $g|_V$ extends as an immersion to $M$.
\end{lemma}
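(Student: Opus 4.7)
The plan is to recognise this as a relative application of the Hirsch--Smale $h$-principle for immersions. The manifold $M=\sthree\setminus B$ is a compact $3$-manifold whose boundary $\partial M$ is non-empty (the case of the empty link is vacuous), so every component of $M$ has non-empty boundary and $M$ qualifies as an open $3$-manifold. Since the target $\rfour$ has strictly larger dimension, immersions $M\to\rfour$ satisfy the $h$-principle.

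The first step is to convert the algebraic datum $G$ into a genuine formal immersion. The map $\Psi\circ dg\circ Q|_U$ records the derivative of $g$ in the trivialisations $Q$ of $T\sthree|_U$ and $E$ of $T\rfour$, by reading off the first three columns of the resulting matrix. Applying the same extraction to $G$ produces a continuous map $M\to V_3(T\rfour)$, or equivalently a fibrewise injective bundle morphism $\Phi\colon TM\to T\rfour$ that restricts to $dg$ on $U$. Since $\rfour$ is contractible, pairing $\Phi$ with any continuous extension $\tilde g\colon M\to\rfour$ of $g|_U$ yields a formal immersion which coincides with the honest immersion $(g,dg)$ over $U$.

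Next, choose an open neighborhood $V$ of $\partial M$ with $\overline V\subset U$, and apply the relative Hirsch--Smale theorem to this formal immersion. Since $M$ is open and the formal immersion is already a genuine immersion over $U$, one obtains a genuine immersion $f\colon M\to\rfour$ with $f|_V=g|_V$ whose differential $df$ is homotopic to $\Phi$ through bundle monomorphisms rel $V$. Transporting this through the trivialisations $Q$ and $\Psi$ gives a homotopy between $\Psi\circ df\circ Q|_M$ and $G$ rel $V$, which is the first assertion. The parametric version of the $h$-principle, applied to the resulting $1$-parameter family of formal immersions on $U$, produces a regular homotopy between $g$ and $f|_U$ that is constant on $V$; constancy on $V$ is exactly the statement $g|_V=f|_V$, and $f$ itself is then the desired immersive extension of $g|_V$ to $M$.

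The main obstacle is the relative nature of the statement: the standard formulation of Hirsch--Smale promotes a formal immersion to a genuine one only up to homotopy of formal immersions, whereas here one needs both the base map and the formal derivative to be preserved on a prescribed collar. The buffer neighborhood $V\subset\overline V\subset U$ is what makes this possible, since the $h$-principle deformation can be supported on $M\setminus\overline V$ while leaving the honest immersion $g$ untouched on $V$. Once this point is arranged, the conclusions about regular homotopy and about extending $g|_V$ are automatic consequences of the standard correspondence between homotopies of formal derivatives and regular homotopies of immersions.
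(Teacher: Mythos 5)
Your argument is correct and follows essentially the same route as the paper, which simply cites the relative and parametric forms of the Smale--Hirsch $h$-principle from Eliashberg--Mishachev (Propositions 7.2.1 and 8.2.1 of \cite{ElMi}); you have merely spelled out the translation of $G$ into a formal immersion via $Q$, $E$, and $\Psi$ and the use of the buffer neighborhood $V$. The only detail worth recording is that $\Psi\circ\Phi\circ Q$ agrees with $G$ only in its first three columns, but the affine homotopy of the fourth column to the cross product of the first three stays in $GL^+(4)$ and is constant over $U$, so the asserted homotopy rel $V$ indeed holds.
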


\begin{proof} 
Combine \cite[Proposition 7.2.1]{ElMi} and \cite[Proposition 8.2.1]{ElMi}.
\end{proof}

\begin{proposition}
\label{aina}
Let $(L,u)$ be a framed link in $\sthree$. Then there exists a locally
stable map $f:\sthree\to\rfour$ such that $\Sigma(f)=L$ and such
that $u$ is a section in $\Ker(df)$.
\end{proposition}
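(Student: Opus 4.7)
The plan is to construct $f$ by combining a local Whitney umbrella model on a tubular neighborhood of $L$ with a relative h-principle (Lemma~\ref{rel_h-prin}) on the complement. I first prescribe $f$ on a tubular neighborhood of $L$ using the normal form from Proposition~\ref{sture}, and then I extend it to an immersion on the rest of $\sthree$.

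For the local model, choose a tubular neighborhood $B=\bigsqcup_j B_j$ of $L$ in $\sthree$ with trivializations $\alpha_j\colon \sone\times D^2_r\to B_j$ such that $\alpha_j(\sone\times\{0\})=K_j$ and $d\alpha_j(\partial_y)|_{y=0}$ realizes the framing $u$ along $K_j$. Pick disjoint embedded circles $\gamma_j\subset\rfour$ with tubular neighborhoods $\beta_j\colon \sone\times D^3_s\to V_j\subset\rfour$, and define $f_0\colon B\to\rfour$ by
\begin{equation*}
f_0\circ\alpha_j(\theta,x,y)=\beta_j(\theta,x,y^2,xy).
\end{equation*}
By Proposition~\ref{sture}, $f_0$ is stable with $\Sigma(f_0)=L$. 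A direct computation of $df_0$ at $y=x=0$ shows that $\Ker(df_0)$ along $L$ is spanned by $d\alpha_j(\partial_y)$, so $u$ is a section of $\Ker(df_0)$.

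Now set $M=\overline{\sthree\setminus B}$ and prolong $f_0$ slightly into $M$, obtaining an immersion $g\colon U\to\rfour$ on an open neighborhood $U$ of $\partial M$ in $M$. By Lemma~\ref{rel_h-prin}, it suffices to show that the formal differential $\Psi\circ dg\circ Q|_U\colon U\to GL^+(4)$ admits a continuous extension $G\colon M\to GL^+(4)$; gluing the resulting immersion on $M$ to $f_0$ then yields the desired locally stable map $f\colon\sthree\to\rfour$.

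The main obstacle is this last extension problem. Using that $GL^+(4)\simeq SO(4)$ satisfies $\pi_0(SO(4))=\pi_2(SO(4))=0$, standard obstruction theory reduces the problem to the vanishing of a single class in $H^2(M,\partial M;\pi_1(SO(4)))=H^2(M,\partial M;\Z/2)$. This class need not vanish for an arbitrary initial choice of the trivializations $\beta_j$, so one exploits the flexibility in the construction: modifying the tubular neighborhood trivializations $\beta_j$ by loops in $SO(3)$ changes the homotopy class of the formal differential on each boundary torus $\partial B_j$ by an element of $\pi_1(SO(4))=\Z/2$ while preserving $\Sigma(f_0)=L$ and the framing $u$. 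A suitable choice of these loops kills the obstruction, and Lemma~\ref{rel_h-prin} then furnishes the required immersion on $M$, completing the proof.
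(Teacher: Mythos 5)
Your strategy is the same as the paper's: the Whitney-umbrella tube model along $L$ (which indeed gives $\Sigma(f_0)=L$ with $u$ a section of $\Ker(df_0)$; note that Proposition~\ref{sture} goes the wrong way for proving local stability of the model, but the pointwise normal form characterization in Subsection~\ref{ulla} covers this), the reduction via Lemma~\ref{rel_h-prin} to extending the formal differential $\Psi\circ df_0\circ Q$ over the link exterior $M$, and the idea of curing the $\pi_1(GL^+(4))=\Z/2$ obstruction by re-twisting the target tube trivializations. The only cosmetic difference is that you package the extension problem as a single class in $H^2(M,\partial M;\Z/2)$, where the paper works through an explicit handle decomposition of $(M,\partial M)$.

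The gap is that the decisive step is asserted, not proved. Your claim that modifying $\beta_j$ by a loop in $SO(3)$ ``changes the homotopy class of the formal differential on $\partial B_j$ by an element of $\pi_1(SO(4))=\Z/2$'' does not even assert that the change is nontrivial; a priori the twist could be absorbed by a homotopy, in which case you have no flexibility and the obstruction cannot be killed. What is actually needed is the computation that the twisted and untwisted models give \emph{non-homotopic} loops $\Psi\circ df\circ Q\circ\lambda_j$ along each longitude while the class along each meridian is unchanged; this is precisely the paper's comparison of the loops $B_1,B_2$ via $P_1,P_2$. Moreover, ``a suitable choice of these loops kills the obstruction'' requires identifying what the obstruction is: the boundary map extends over $M$ (using $\pi_2(GL^+(4))=0$ and $\dim M=3$) if and only if the induced homomorphism $H_1(\partial M;\Z_2)\to\Z_2$ annihilates $\ker\bigl(H_1(\partial M;\Z_2)\to H_1(M;\Z_2)\bigr)$, which is spanned by the classes $\lambda_j-\sum_i c_{ij}\mu_i$ with $c_{ij}$ the mod $2$ linking data determined by $(L,u)$. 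Since the values on the meridians are forced by the umbrella model (the paper computes them, via the loops $A_1,\dots,A_4$, to be the nontrivial element), one must check that flipping the longitude values independently, component by component, is possible and suffices to satisfy these finitely many conditions, one for each component of $L$. This bookkeeping — the meridian computation, the longitude toggle, and the mod $2$ counting carried out in the paper's handle-by-handle extension — is the real content of the proof and is missing from your proposal.
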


\begin{proof}
Let $Q$, $E$, $\Psi$, and $\Xi$ be as defined in the introduction
to this section. Let $U$ be a proper tubular neighborhood of $L$ and let
$V$ be a proper tubular neighborhood of some knot in $\rfour$. Let
$L_0,\dots,L_d$ and $U_0,\dots,U_d$ be the components of $L$ and
$U$ respectively, with $L_j\subset U_j$, and let
$\alpha=\bigcup_{j=0}^{d}\alpha_j:\bigcup_{j=0}^{d}\sone\times\rtwo\to
\bigcup_{j=0}^{d}U_j=U$ and $\beta:\sone\times\rthree\to V$
be trivializations. On $U$ and $V$ we have the coordinate vector fields
$\partial_\theta,\partial_1,\partial_2$ and
$\partial_\theta,\partial_1,\partial_2,\partial_3$ induced by $\alpha$ and
$\beta$ respectively.
We assume that $\alpha$ and $\beta$ are orientation preserving
when $\sone\times\rtwo$ and $\sone\times\rthree$ are canonically
oriented, and we assume $\alpha$ to be such that
$\partial_2(p)=u(p)$ for each $p\in L$.

Define $f:U\to V$ by $f|_{U_j}=\beta\circ s\circ\alpha_j^{-1}$,
where $s:\sone\times\rtwo\to\sone\times\rthree$ is the map
$s(\theta,x,y)=(\theta,x,y^2,xy)$. Then $f$ has singularity link
$\Sigma(f)=L$ and, by the choice of $\alpha$, the frame $u$ of $L$
is a section in $\Ker(df)$.

Define $F:U\setminus L\to GL^+(4)$ by $F=\Psi\circ df\circ
Q|_{U\setminus L}$. For $j=0,\dots,d$ let $\mu_j:\sone\to U_j$
be the meridian $\mu_j(\varphi)=\alpha_j(0,\cos\varphi,\sin\varphi)$
of $L_j$. Then each loop $F\circ \mu_j$ is homotopically
nontrivial. To see this let $\gamma_j=f\circ\mu_j$ and define
$X:U\setminus L\to T\rfour$ by $X=\Xi\circ df\circ Q$.
Then $(F\circ\mu_j)(\varphi)=A_2(\varphi)A_3(\varphi)A_1(\varphi)$
where
\begin{align*}
A_1(\varphi) &= \left[\begin{array}{cc}
        	     [\partial_\theta(\mu_j(\varphi)),
                     \partial_1(\mu_j(\varphi)),\partial_2(\mu_j(\varphi))]_Q^{-1} &\\
			& 1\end{array}\right],\\
A_2(\varphi) &= \big[\partial_\theta(\gamma_j(\varphi)),\partial_1(\gamma_j(\varphi)), 
	\partial_2(\gamma_j(\varphi)),\partial_3(\gamma_j(\varphi))\big]_E,\\
A_3(\varphi) &= \left[\begin{array}{cccc}1 & 0           & 0         		& x_1(\varphi)\\
                                         0 & 1           & 0         		& x_2(\varphi)\\
                                         0 & 0           & 2\sin\varphi & x_3(\varphi)\\
                    			 0 & \sin\varphi & \cos\varphi  & x_4(\varphi)
                     \end{array}\right],
\end{align*}
and $x_1(\varphi),x_2(\varphi),x_3(\varphi),x_4(\varphi)$ are the
coordinates for $X(\mu_j(\varphi))$ when expressed in the basis
$(\partial_\theta(\gamma_j(\varphi)),\partial_1(\gamma_j(\varphi)),
\partial_2(\gamma_j(\varphi)),\partial_3(\gamma_j(\varphi)))$
for $T_{\gamma_j(\varphi)}\rfour$. We conclude, since the loops
$A_1$ and $A_2$ are\ nullhomotopic, that $F\circ\mu_j$ and $A_3$
are homotopic.

Let $Y:\sone\to T\rfour$ be the vector field
$Y(\varphi)=\cos\varphi\partial_2(\gamma_j(\varphi))-\sin\varphi\partial_3(\gamma_j(\varphi))$
along $\gamma_j$. Then $Y(\varphi)$ is normal to the hyperplane
$df(T_{\mu_j(\varphi)}\sthree)$ in $\rfour$, and $Y(\varphi)$ and
$X(\mu_j(\varphi))$ points into the same half space. The homotopy
$H_t(\varphi)=(1-t)X(\mu_j(\varphi))-tY(\varphi)$ induces a homotopy
from $A_3$ to the nontrivial loop $A_4:\sone\to GL^+(4)$ given by
\begin{equation*}
A_4(\varphi)=\left[\begin{array}{cccc}     1   &   0       &   0       &   0 \\
                        0   &   1       &   0       &   0 \\
                            0   &   0       &   2\sin\varphi     &   -\cos\varphi \\
                            0   &   \sin\varphi  &   \cos\varphi  &   \sin\varphi
\end{array}\right].
\end{equation*}
Hence $A_3$, and consequently $F\circ\mu_j$, are homotopically
nontrivial.

For $j=0,\dots,d$ let $\lambda_j:\sone\to U_j$ be the longitude
$\lambda_j(\theta)=\alpha_j(\theta,0,1)$. Further let
$r:\sone\times\rthree\to\sone\times\rthree$ be fiberwise rotation
$r(\theta,x,y,z)=(\theta,\cos\theta x+\sin\theta y,\cos\theta
y-\sin\theta x,z)$, and define $g:U\to V$ by $g|_{U_j}=\beta\circ
r\circ s\circ\alpha_j^{-1}$. Associated to $g$ is the map
$G:U\setminus L\to GL^+(4)$ defined by $G=\Psi\circ dg\circ
Q|_{U\setminus L}$. We prove that the loops $F\circ\lambda_j$ and
$G\circ\lambda_j$ are non-homotopic.

Let $\xi_j=f\circ\lambda_j$ and $\zeta_j=g\circ\lambda_j$. Define
$Z_j,W_j:\sone\to T\rfour$ by $Z_j=\Xi\circ df\circ
Q\circ\lambda_j$ and $W_j=\Xi\circ dg\circ Q\circ\lambda_j$. Then
the affine homotopies
$Z_t(\theta)=(1-t)Z_j(\theta)+t\partial_3(\xi_j(\theta))$ and
$W_t(\theta)=(1-t)W_j(\theta)+t\partial_3(\zeta_j(\theta))$ induce
homotopies between $F\circ\lambda_j$ and $B_1$, respectively
$G\circ\lambda_j$ and $B_2$, where $B_1,B_2:\sone\to GL^+(4)$ are
the loops
\begin{align*}
B_1(\theta) &= \big[df(Q_1(\lambda_j(\theta))),df(Q_2(\lambda_j(\theta))),
df(Q_3(\lambda_j(\theta))),\partial_3(\xi_j(\theta))\big]_E,\\
B_2(\theta) &=
\big[dg(Q_1(\lambda_j(\theta))),dg(Q_2(\lambda_j(\theta))),
dg(Q_3(\lambda_j(\theta))),\partial_3(\zeta_j(\theta))\big]_E,
\end{align*}
and $Q_1,Q_2,Q_3:\sthree\to T\sthree$ are the component vector
fields of $Q$. Define $C,D_1,D_2:\sone\to GL^+(4)$ by
\begin{align*}
C(\theta) &=\left[\begin{array}{cc}[\partial_\theta(\lambda_j(\theta)),
            \partial_1(\lambda_j(\theta)),\partial_2(\lambda_j(\theta))]_Q^{-1} & \\
                                        & 1\end{array}\right],\\
D_1(\theta) &= \big[\partial_\theta(\xi_j(\theta)),\partial_1(\xi_j(\theta)),
               \partial_2(\xi_j(\theta)),\partial_3(\xi_j(\theta))\big]_E,\\
D_2(\theta) &= \big[\partial_\theta(\zeta_j(\theta)),\partial_1(\zeta_j(\theta)),
               \partial_2(\zeta_j(\theta)),\partial_3(\zeta_j(\theta))\big]_E.
\end{align*}
Then $B_1(\theta)=D_1(\theta)P_1(\theta)C(\theta)$ and
$B_2(\theta)=D_2(\theta)P_2(\theta)C(\theta)$, where
$P_1,P_2:\sone\to GL^+(4)$ are given by
\begin{equation*}
P_1(\theta)=\left[\begin{array}{cccc}   1   &   0       &   0       &   0 \\
					0   &   1       &   0       &   0 \\
                          		0   &   0       &   2       &   0 \\
                            		0   &   1       &   0       &   1
\end{array}\right]
\quad\text{and}\quad
P_2(\theta)=\left[\begin{array}{cccc}     1   	&   0        &   0        &   0 \\
                        		\cos\theta    &   \cos\theta  &   2\sin\theta &   0 \\
                            		-\sin\theta   &   -\sin\theta &   2\cos\theta &   0 \\
                            		0             &   1           &   0           &   1
\end{array}\right]
\end{equation*}
respectively. The loops $P_1$ and $P_2$ are obviously non-homotopic. Since $D_1$
and $D_2$ are homotopic we conclude that $F\circ\lambda_j$ and
$G\circ\lambda_j$ are non-homotopic.

The group $H_1(\sthree\setminus L; \mathbb{Z}_2)$ is freely
generated by the homology classes of the meridians $\mu_j$. Each
longitude $\lambda_j$ is a cycle in $\sthree\setminus L$. If
$\lambda_j$ is homologous to a sum of an odd number of meridians
\emph{and} the loop $F\circ\lambda_j$ is nullhomotopic, then
replace $f|_{U_j}$ by $g|_{U_j}$ in the definition of $f$.
Conversely, if $\lambda_j$ is homologous to a sum of an even
number of meridians \emph{and} $F\circ\lambda_j$ is homotopically
nontrivial, then replace $f|_{U_j}$ by $g|_{U_j}$. The
redefined map $f:U\to V$ still has singularity link $\Sigma(f)=L$
and section $u$ in $\Ker(df)$.

Let $B=\bigcup_{j=0}^d\alpha_j(\sone\times D^2_1)$, and let $M=\sthree\setminus B$.
By Morse theory the pair $(M,\partial M)$ has a handle decomposition
$\partial M\subset M_0\subset M_1\subset M_2\subset M_3=M$ with
$M_0$ a closed collar neighborhood of $\partial M$ in $M$, $M_1$
connected since $M$ is connected, and $M_3$ obtained from $M_2$ by
the attachment of one 3-handle. The map $F_{-1}=F|_{\partial
M}:\partial M\to GL^+(4)$ is continuous. Our goal is to extend
$F_{-1}$ to a continuous map $F_3:M\to GL^+(4)$. Lemma
\ref{rel_h-prin} then assures that the restriction of
$f$ to a possibly smaller open neighborhood of $L$ than $U$
extends as an immersion to $M$.

Let $N_0,\dots,N_d$ be the components of $M_0$, and let
$\rho_0:M_0\to\partial M$ be a retraction that takes a 2-disk
$\Omega\subset\partial N_0$ to a point $b\in\partial M$. Let
$H_1,\dots, H_n$ be the 1-handles attached to $\partial M_0$ to
obtain $M_1$. Each handle $H_j$ has a core segment $I_j$, and we
may assume, possibly after a handle slide, that for $j=1,\dots,d$
the segment $I_j$ has one of its endpoints on $\partial N_0$ and
the other on $\partial N_j$, and for $j=d+1,\dots,n$ that
$I_j$ has both its endpoints in $\Omega$.

Let $F_0:M_0\to GL^+(4)$ be the continuous extension
$F_{-1}\circ\rho_0$ of $F_{-1}$. Extend $F_0$ arbitrarily
but continuously over the segments $I_1,\dots,I_d$. For
$j=d+1,\dots,n$ let $\lambda_j:\sone\to I_j\cup \Omega$ be a
simple loop passing through the segment $I_j$. The loop
$\lambda_j$ is a cycle in the first homology of $\sthree\setminus
L$ with $\mathbb{Z}_2$ coefficients. If $\lambda_j$ is homologous
to a sum of an odd number of meridians, then extend $F_0$ over
$I_j$ so that $F_0|_{I_j}:I_j\to GL^+(4)$ is a homotopically
nontrivial loop, based at $F_0(b)$. Otherwise extend $F_0$ over
$I_j$ as a constant map.

Let $\rho_1:M_1\to M_0\cup I_0\cup \dots\cup I_n$ be a retraction,
and let $F_1:M_1\to GL^+(4)$ be the extension 
$F_0\circ\rho_1$ of $F_0$. Consider the induced homomorphism
\begin{equation*}
(F_1)_\ast:H_1(M_1;\mathbb{Z}_2)\to \pi_1(GL^+(4))=\Z_2
\end{equation*}
where we have identified $\pi_1(GL^+(4))$ with
$H_1(GL^+(4);\mathbb{Z}_2)$ via the Hurewicz isomorphism. Now
$F_1$ extends continuously over $M_2$ if $(F_1)_\ast\hv v\hh=0$
for each loop $v:\sone\to M_1$ that parameterizes the boundary of
a core disk of a 2-handle in $M_2$. Here $\hv v\hh$ denotes the
homology class of $v$. So let $v$ be any such loop. Then, in
$H_1(M_1;\mathbb{Z}_2)$, for some $a_i,b_j\in\mathbb{Z}_2$,
\begin{equation*}
\hv
v\hh=\sum_{i=0}^da_i\hv\mu_i\hh+\sum_{j=0}^nb_j\hv\lambda_j\hh.
\end{equation*}
Moreover, if $\hv\lambda_j\hh=\sum_{i=0}^dc_{ij}\hv\mu_i\hh$ for
$j=1,\dots,n$, with $c_{ij}\in\mathbb{Z}_2$, when regarded as
homology classes in $H_1(\sthree\setminus L;\mathbb{Z}_2)$, then
by construction
\begin{equation*}
(F_1)_\ast\hv v\hh =
\sum_{i=0}^da_i(F_1)_\ast\hv\mu_i\hh+\sum_{j=0}^nb_j(F_1)_\ast\hv\lambda_j\hh
                         =\sum_{i=0}^d\big(a_i+\sum_{j=0}^nb_jc_{ij}\big).
\end{equation*}
Because the cycle $v$ is a boundary in $\sthree\setminus L$ we
have that
\begin{equation*} 0 = \sum_{i=0}^da_i\hv\mu_i\hh+\sum_{j=0}^nb_j\hv\lambda_j\hh
                = \sum_{i=0}^d\big(a_i+\sum_{j=0}^nb_jc_{ij}\big)\hv\mu_i\hh
\end{equation*}
in $H_1(\sthree\setminus L;\mathbb{Z}_2)$. Accordingly
$(F_1)_\ast\hv v\hh=0$, and we conclude that $F_1$ extends to a
continuous map $F_2:M_2\to GL^+(4)$. Finally $F_2$ extends to a
continuous map $F_3:M_3\to GL^+(4)$ because $\pi_2(GL^+(4))=0$.
Hence we have the desired extension of $F|_{\partial M}$ to $M$.
\end{proof}

\section{Sufficiency}
\label{sufficiency}
In this section we prove that if locally stable maps $f$ and $g$
have isotopic framed singularity links, then there exists an
immersion $k:\sthree\to\rfour$ such that $f$ is \lshc\ to
$g\natural k$. Throughout this section let $Q$, $E$, $\Psi$, and
$\Xi$ be as defined in the introduction to Section
\ref{realization}.

\begin{lemma}
\label{tomas}
Let $f,g:\sthree\to\rfour$ be singular stable maps such that
$\sigma(f)=\sigma(g)$. Then there exists a neighborhood $U$
of $\Sigma(g)$ and an \lsh\ $h_t:\sthree\to\rfour$ such that
$h_0=f$ and $h_1|_U=g|_U$.
\end{lemma}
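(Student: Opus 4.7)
The plan is to construct the required \lsh\ in three stages: first align $\Sigma(f)$ with $\Sigma(g)$ (with matching kernel framing) by a source-side ambient isotopy of $\sthree$; second, match the two local normal forms near $\Sigma(g)$ by a further source-side isotopy; third, align the target-side parametrizations by an ambient isotopy of $\rfour$. Throughout, any ambient isotopy in source or target composes with $f$ to produce an \lsh, since local stability is preserved under diffeomorphisms. For the first stage, the hypothesis $\sigma(f)=\sigma(g)$ together with the framed isotopy extension property recalled in Subsection \ref{totto} yields a smooth isotopy $\varphi_t:\sthree\to\sthree$ with $\varphi_0=\mathrm{id}$, $\varphi_1(\Sigma(f))=\Sigma(g)$, and $d\varphi_1\circ u_f=u_g$; then $f_t:=f\circ\varphi_t^{-1}$ is an \lsh\ from $f$ to $f_1$ with $\Sigma(f_1)=\Sigma(g)$ and $\Ker(df_1)=\Ker(dg)$ along $\Sigma(g)$ with coincident framing sections.

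For the second stage, Proposition \ref{sture} supplies tubular-neighborhood parametrizations $(\alpha_1,\beta_1)$ for $f_1$ and $(\alpha_2,\beta_2)$ for $g$, each bringing its map to the normal form $(\theta,x,y)\mapsto(\theta,x,y^2,xy)$. The source-side trivializations $\alpha_1,\alpha_2:\sone\times D^2_r\to\sthree$ are two tubular-neighborhood parametrizations of the same framed link $\Sigma(g)\subset\sthree$, since $\partial_y|_{y=0}$ for each realizes the common kernel framing. By uniqueness of tubular neighborhoods they differ by an ambient isotopy of $\sthree$ supported near $\Sigma(g)$. Applying this isotopy in the source yields an \lsh\ from $f_1$ to a map $f_2$ for which the source-side parametrizations coincide, $\alpha_{f_2}=\alpha_g=:\alpha$, on a tubular neighborhood $W$ of $\Sigma(g)$.

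For the third stage, on $W$ we now have $f_2|_W=(\beta_{f_2}\circ\beta_g^{-1})\circ g|_W$, where $\beta_{f_2}\circ\beta_g^{-1}$ is a diffeomorphism from a tubular neighborhood of $g(\Sigma(g))$ onto one of $f_2(\Sigma(g))$ in $\rfour$. Any two embeddings of a $1$-manifold into $\rfour$ are ambient isotopic (codimension three prevents knotting), and the tubular-neighborhood parametrizations of a fixed embedded link in $\rfour$ with trivial normal bundle lie in a connected space; combined with the isotopy extension theorem, this produces a smooth isotopy $\Phi_t:\rfour\to\rfour$ from the identity with $\Phi_1\circ\beta_{f_2}=\beta_g$ on a possibly smaller tubular neighborhood. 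Then $\Phi_t\circ f_2$ is an \lsh\ ending at a map that agrees with $g$ on a neighborhood $U\subset W$ of $\Sigma(g)$, and concatenating the three \lshs\ proves the lemma. I expect the main obstacle to be this third stage: globalizing the local diffeomorphism $\beta_{f_2}\circ\beta_g^{-1}$, which relates two tubular neighborhoods of possibly distinct embedded $1$-manifolds in $\rfour$, to an ambient isotopy of all of $\rfour$, which requires combining codimension-three unknottedness, normal-bundle triviality, and isotopy extension.
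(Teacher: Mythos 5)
Your overall strategy (align the framed singularity links in the source, match the normal-form parametrizations, then correct the target by an ambient isotopy of $\rfour$) is the same skeleton as the paper's proof, and your first two stages are essentially what the paper does when it arranges $\Sigma(f)=\Sigma(g)$, $f|_{\Sigma(f)}=g|_{\Sigma(g)}$, and a common source trivialization $\alpha$. The gap is in your third stage, and it is exactly the point to which the paper devotes the bulk of its argument. Your claim that ``the tubular-neighborhood parametrizations of a fixed embedded link in $\rfour$ with trivial normal bundle lie in a connected space'' is false: a parametrization $\beta:\sone\times D^3_s\to V$ determines a normal $3$-framing of the image curve, and homotopy classes of such framings over a circle are classified by $\pi_1(SO(3))\cong\mathbb{Z}_2$, so there are two classes per component. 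Unknottedness of circles in $\rfour$ and triviality of the normal bundle therefore do not suffice: if the framings induced by $\beta_{f_2}$ and $\beta_g$ were in different classes, no ambient isotopy $\Phi_t$ of $\rfour$ with $\Phi_1\circ\beta_{f_2}=\beta_g$ (even up to a further homotopy of framings) could exist, and your construction would stop there.

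What is missing is a proof that this $\mathbb{Z}_2$ obstruction vanishes, and this is where the hypothesis that $f$ and $g$ agree (in normal form, with the \emph{same} source parametrization) near $\Sigma$ must be used, not merely that both images are unknotted circles with trivial normal bundle. The paper does this by comparing the loops $A_1,B:\sone\to GL^+(4)$ recording the two target frames along $\hat\Sigma$: it factors $A_1$ (up to homotopy) as the product of $F\circ\gamma$, where $F=\Psi\circ df\circ Q$ is defined on $\sthree\setminus\Sigma(f)$ and $\gamma$ is a longitude pushed off $\Sigma$ inside the common tube, with a loop $A_4$ depending only on the source frame; since $\pi_1(GL^+(4))=\mathbb{Z}_2$ is abelian, $[F\circ\gamma]$ depends only on the class of $\gamma$ in $H_1(\sthree\setminus\Sigma;\mathbb{Z}_2)$, and both ingredients coincide for $f$ and $g$ once $\alpha^1=\alpha^2$. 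Hence the target framings are forced to be homotopic, and only then does the ambient isotopy $\psi_t$ of $\rfour$ (your $\Phi_t$) exist. Without an argument of this kind your third stage does not go through, so as written the proposal has a genuine gap.
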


\begin{proof}
For notational simplicity we assume that $\Sigma(f)$ and
$\Sigma(g)$ are knots. The proof below easily generalizes to the
case when $\Sigma(f)$ and $\Sigma(g)$ have more than one
component.

It follows from the assumption $\sigma(f)=\sigma(g)$, the Tubular
Neighborhood Theorem, Proposition \ref{sture}, and possibly after
replacing $f$ with a map to which $f$ is \lshc, that we can assume
that $\Sigma(f)=\Sigma(g)$, $f|_{\Sigma(f)}=g|_{\Sigma(g)}$, and
that there are tubular neighborhoods $U$ of
$\Sigma(f)=\Sigma(g)$ and $V$ of $f(\Sigma(f))=g(\Sigma(g))$, with
trivializations $\alpha^1,\alpha^2:\sone\times D^2_{2r}\to U$ and
$\beta^1,\beta^2:\sone\times D^3_s\to V$, such that $f(U)\subset
V$, $g(U)\subset V$, and such that $(\beta^1)^{-1}\circ f\circ
\alpha^1$ and $(\beta^2)^{-1}\circ g\circ \alpha^2$ satisfy
Equation \eqref{normform}. Furthermore we can assume that
$\alpha^1,\alpha^2$ and $\beta^1,\beta^2$ are orientation
preserving when $\sone\times D^2_{2r}$ and $\sone\times D^3_{s}$ are
canonically oriented.

Let $\Sigma=\Sigma(f)=\Sigma(g)$,
$\hat\Sigma=f(\Sigma(f))=g(\Sigma(g))$, and let
$\partial_\theta^i,\partial_1^i,\partial_2^i$ and
$\partial_\theta^i,\partial_1^i,\partial_2^i,\partial_3^i$ be the
coordinate vector fields on $U$ and $V$ induced by $\alpha^i$ and
$\beta^i$ for $i=1,2$. If
$(\partial_1^1|_\Sigma,\partial_2^1|_\Sigma)$ and
$(\partial_1^2|_\Sigma,\partial_2^2|_\Sigma)$, respectively
$(\partial_1^1|_{\hat\Sigma},\partial_2^1|_{\hat\Sigma},\partial_3^1|_{\hat\Sigma})$
and
$(\partial_1^2|_{\hat\Sigma},\partial_2^2|_{\hat\Sigma},\partial_3^2|_{\hat\Sigma})$,
are homotopic frames for $\Sigma$ and $\hat\Sigma$, then the
identity maps of $U$ and $V$ are ambient isotopic, say via
$\varphi_t:\sthree\to\sthree$ and $\psi_t:\rfour\to\rfour$, to
$\alpha^1\circ (\alpha^2)^{-1}:U\to U$ and $\beta^2\circ
(\beta^1)^{-1}:V\to V$ respectively. The homotopy
$h_t:\sthree\to\rfour$ defined by $h_t=\psi_t\circ
f\circ\varphi_t$ is an \lsh\ of $f$ such that $h_1|_U=g|_U$.

The frames $(\partial_1^1|_\Sigma,\partial_2^1|_\Sigma)$ and
$(\partial_1^2|_\Sigma,\partial_2^2|_\Sigma)$ for $\Sigma$ are
homotopic because $\sigma(f)=\sigma(g)$. Thus, after precomposing $f$ by the time-one map of an
ambient isotopy of $\sthree$ which restricts to an isotopy through
automorphisms of $U$ from the identity map to
$\alpha^1\circ(\alpha^2)^{-1}$, we can assume that
$\alpha^1=\alpha^2$.

Let $\alpha=\alpha^1=\alpha^2$. Define $\gamma:\sone\to\sthree$ by
$\gamma(\theta)=\alpha(\theta,0,r)$, and let $\mu:\sone\to\sthree$
be a parameterization of $\Sigma$ which is homotopic in $U$ to
$\gamma$. Define $\hat\gamma,\hat\mu:\sone\to\rfour$ by
$\hat\gamma=f\circ\gamma$ and $\hat\mu=f\circ\mu$. Let
$\partial_\theta,\partial_1,\partial_2$ be the coordinate vector
fields on $U$ induced by $\alpha$.
The frames
$(\partial_1^1|_{\hat\Sigma},\partial_2^1|_{\hat\Sigma},\partial_3^1|_{\hat\Sigma})$
and
$(\partial_1^2|_{\hat\Sigma},\partial_2^2|_{\hat\Sigma},\partial_3^2|_{\hat\Sigma})$
are homotopic if and only if the loops $A_1, B:\sone\to GL^+(4)$
defined by
\begin{align*}
A_1(\theta)&=[\partial_\theta^1(\hat\mu(\theta)),\partial_1^1(\hat\mu(\theta)),
	\partial_2^1(\hat\mu(\theta)),\partial_3^1(\hat\mu(\theta))]_E,\\
B(\theta)&=[\partial_\theta^2(\hat\mu(\theta)),\partial_1^2(\hat\mu(\theta)),
	\partial_2^2(\hat\mu(\theta)),\partial_3^2(\hat\mu(\theta))]_E.
\end{align*}
are homotopic. We prove that the homotopy class of $A_1$ is
determined by the homology class of $\gamma$ in
$H_1(\sthree\setminus\Sigma(f);\mathbb{Z}_2)$ and the homotopy
class of the frame
$(\partial_\theta|_\Sigma,\partial_1|_\Sigma,\partial_2|_\Sigma)$.
Then the same is true for $B$, and in particular $A_1$ and $B$ are
homotopic.

The loop $A_1$ is homotopic to $A_2:\sone\to GL^+(4)$ defined by
\begin{align*}
A_2(\theta)&= [\partial^1_\theta(\hat\gamma(\theta)),(\partial^1_1+r\partial^1_3)(\hat\gamma(\theta)),
              2r\partial^1_2(\hat\gamma(\theta)),\partial^1_3(\hat\gamma(\theta))]_E\\
           &= [df(\partial_\theta(\gamma(\theta))),df(\partial_1(\gamma(\theta))),
		df(\partial_2(\gamma(\theta))),\partial^1_3(f(\gamma(\theta)))]_E,
\end{align*}
which in turn is homotopic to the loop $A_3:\sone\to GL^+(4)$
given by
\begin{equation*}
A_3(\theta)=(\Psi\circ
df)(\partial_\theta(\gamma(\theta)),\partial_1(\gamma(\theta)),\partial_2(\gamma(\theta))).
\end{equation*}
Define $F:\sthree\setminus\Sigma(f)\to GL^+(4)$ by $F=\Psi\circ
df\circ Q$. Then the homotopy class of $F\circ\gamma$ depends
only on the homology class of $\gamma$ in
$H_1(\sthree\setminus\Sigma(f);\mathbb{Z}_2)$, see the proof of
Proposition \ref{aina}. Now $A_3(\theta)=(F\circ
\gamma)(\theta)A_4(\theta)$ where
\begin{equation*}
A_4(\theta)=\left[\begin{array}{cc}[\partial_\theta(\gamma(\theta)),
\partial_1(\gamma(\theta)),\partial_2(\gamma(\theta))]_Q & \\
                                 & 1 \end{array}\right].
\end{equation*}
Hence the homotopy class of $A_3$ is the sum of the homotopy
classes of $F\circ\gamma$ and $A_4$. Finally, a homotopy from
$\gamma$ to $\mu$ in $U$ induces a homotopy from $A_4$ to
$A_5:\sone\to GL^+(4)$ defined by
\begin{equation*}
A_5(\theta)=\left[\begin{array}{cc}[\partial_\theta(\mu(\theta)),
\partial_1(\mu(\theta)),\partial_2(\mu(\theta))]_Q & \\
                                 & 1 \end{array}\right].
\end{equation*}
Since the homotopy class of $A_5$ depends only on the homotopy
class of
$(\partial_\theta|_\Sigma,\partial_1|_\Sigma,\partial_2|_\Sigma)$
we conclude that the homotopy class of $A_1$ depends only on the
homology class of $\gamma$ and the homotopy class of
$(\partial_\theta|_\Sigma,\partial_1|_\Sigma,\partial_2|_\Sigma)$.
\end{proof}

\begin{lemma}
\label{relpara_h-prin}
Let $B$ be a proper tubular neighborhood of a link in $\sthree$. Let
$M=\sthree\setminus B$, and let $U$ be an open neighborhood of
$\partial M$ in $M$. Assume $f,g:M\to \rfour$ are
immersions, and assume that the restrictions $f|_U$ and $g|_U$ are
regularly homotopic via $h_t:U\to\rfour$. Then $f$ and $g$
are regularly homotopic via a homotopy that agrees with $h_t$ in
an open neighborhood of $\partial M$, possibly smaller than $U$,
if and only if the continuous map
\begin{equation*}
(\Psi\circ df\circ Q)\cup H\cup (\Psi\circ dg\circ Q): 
M\times\{0\}\cup U\times I\cup M\times\{1\}\to GL^+(4)
\end{equation*}
extends to a continuous map $M\times I\to GL^+(4)$. Here
$H:U\times I\to GL^+(4)$ is the homotopy $H(p,t)=(\Psi\circ
dh_t\circ Q)(p)$.
\end{lemma}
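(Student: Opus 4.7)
The plan is to view this statement as a parametric, relative version of Lemma \ref{rel_h-prin} and to prove it by an analogous application of the Smale--Hirsch $h$-principle, now on the product $M\times I$ rather than on $M$.

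For the ``only if'' direction, suppose $F_t\colon M\to\rfour$ is a regular homotopy from $f$ to $g$ with $F_t|_V=h_t|_V$ for some open neighborhood $V\subset U$ of $\partial M$. Then $(p,t)\mapsto(\Psi\circ dF_t\circ Q)(p)$ is a continuous map $M\times I\to GL^+(4)$ which restricts to the prescribed data on $M\times\{0,1\}$ and on $V\times I$. Using that $U$ admits a deformation retraction onto $V$ along a collar of $\partial M$, I would then homotope this map on $U\times I$, relative to $V\times I\cup U\times\{0,1\}$, so that it coincides with $H$ on all of $U\times I$. This yields the desired continuous extension of the prescribed data to $M\times I$.

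For the ``if'' direction, which is the substantive content, I would apply the $h$-principle for immersions in its parametric, relative form. Given an extension $\widetilde H\colon M\times I\to GL^+(4)$ of the formal data, one reads $\widetilde H$ (after composing with $\Psi^{-1}$ and using the trivializations $Q$ and $E$) as a one-parameter family of formal immersions $M\to\rfour$. This family is already covered by honest immersions at the time-slices $t=0,1$ (namely $f$ and $g$) and on $U\times I$ (namely $h_t$). Since $M$ is the complement of an open tubular neighborhood of a link in $\sthree$ and hence an open $3$-manifold, the Smale--Hirsch $h$-principle---in its parametric relative form, i.e.\ the natural one-parameter generalizations of \cite[Proposition~7.2.1]{ElMi} and \cite[Proposition~8.2.1]{ElMi}---produces an actual regular homotopy $F_t\colon M\to\rfour$ from $f$ to $g$ whose formal derivative is homotopic to $\widetilde H$ rel $M\times\{0,1\}$ and which agrees with $h_t$ on some open neighborhood of $\partial M$ contained in $U$.

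The main technical obstacle I expect is ensuring that the parametric relative $h$-principle yields a homotopy that agrees with $h_t$ on a full collar neighborhood of $\partial M$, rather than only at the time-slices $t=0,1$. The intended organization is to regard $M\times\{0,1\}\cup U\times I$ as a ``boundary region'' on which actual immersions are already specified, extend $\widetilde H$ slightly in the $t$-direction so that the $h$-principle on the open manifold $M\times I$ applies cleanly, and then invoke the one-parameter version of the argument used in Lemma \ref{rel_h-prin}. Once this is in place, the remaining assertions---that the regular homotopy has the right formal derivative and restricts to $h_t$ on the required neighborhood of $\partial M$---follow from the usual properties of the Hirsch--Smale construction.
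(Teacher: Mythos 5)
Your proposal is correct and follows essentially the same route as the paper, whose proof consists precisely of invoking the parametric, relative Smale--Hirsch $h$-principle via \cite[Proposition 7.2.1]{ElMi} and \cite[Proposition 8.2.1]{ElMi}. The extra details you supply (the cofibration/retraction argument for the ``only if'' direction and the care about agreeing with $h_t$ on a full collar) are exactly the points the paper leaves implicit in that citation.
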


\begin{proof}
The lemma follows from \cite[Proposition 7.2.1]{ElMi} and \cite[Proposition 8.2.1]{ElMi}.
\end{proof}

\begin{proposition}
\label{f_lshc_gsumk}
Let $f,g:\sthree\to\rfour$ be stable maps such that
$\sigma(f)=\sigma(g)$. Then $f$ is \lshc\ to the connected sum of
$g$ with an immersion $k:\sthree\to\rfour$.
\end{proposition}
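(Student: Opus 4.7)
The plan is to use Lemma \ref{tomas} to align $f$ and $g$ near their singularity link, and then compare their restrictions to the complement as immersions via the relative h-principle of Lemma \ref{relpara_h-prin}, using connect sum with an immersion $k$ to absorb the top obstruction.

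First I would apply Lemma \ref{tomas} and replace $f$ by an \lshc\ map (still called $f$) such that $f|_U=g|_U$ on an open neighborhood $U$ of $\Sigma(f)=\Sigma(g)$. Choose a closed tubular neighborhood $B\subset U$ of $\Sigma$ and set $M=\sthree\setminus\operatorname{int} B$. Then $f|_M$ and $g|_M$ are immersions that coincide on the open neighborhood $U\cap M$ of $\partial M$. By Lemma \ref{relpara_h-prin}, producing a regular homotopy from $f|_M$ to $(g\natural k)|_M$ relative to a smaller neighborhood of $\partial M$ is equivalent to extending the continuous map $(\Psi\circ df\circ Q)\cup(\text{constant})\cup(\Psi\circ d(g\natural k)\circ Q)$ from $\partial(M\times I)$ to $M\times I$, taking values in $GL^+(4)\simeq SO(4)$.

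Equivalently, writing $\delta=(\Psi\circ df\circ Q)\cdot(\Psi\circ dg\circ Q)^{-1}\colon M\to SO(4)$, which equals the identity on $U\cap M$, we need $\delta$ to be nullhomotopic rel $\partial M$. Since $\pi_0(SO(4))=0=\pi_2(SO(4))$, $\pi_1(SO(4))=\Z_2$ and $\pi_3(SO(4))=\Z\oplus\Z$, obstruction theory yields potential obstructions only in $H^1(M,\partial M;\Z_2)$ and $H^3(M,\partial M;\Z\oplus\Z)\cong\pi_3(SO(4))$. The key geometric input is that the connected sum operation $g\mapsto g\natural k$ alters the formal data $\Psi\circ dg\circ Q$ only inside a small 3-ball $D\subset M\setminus U$ disjoint from $\Sigma$, introducing a bubble whose homotopy class is precisely $\Omega_{4,3}(k)\in\pi_3(SO(4))$. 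Since Kervaire realizes every element of $\pi_3(SO(4))$ as the Smale invariant of some immersion, we may select $k$ so that $\Omega_{4,3}(k)$ exactly cancels the top-dimensional obstruction.

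For the lower obstruction in $H^1(M,\partial M;\Z_2)$, I would observe that, since $\delta\equiv e$ on $U\supset\partial M$, it factors through the quotient $M/U\simeq M/\partial M$, and the primary obstruction $\delta^*w\in H^1(M,\partial M;\Z_2)$ is evaluated on arcs in $M$ joining components of $\partial M$. The $\Z_2$-difference on any such arc depends only on its class modulo the meridians of $\Sigma$; since each meridian sits inside $U$ where $\delta=e$, meridians contribute trivially. I would then use the fact that the framings along $\Sigma$ were already matched in the proof of Lemma \ref{tomas} via the homology classes in $H_1(\sthree\setminus\Sigma;\Z_2)$, which is the same data that controls the remaining arcs, so the $H^1$ obstruction vanishes. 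With both obstructions killed, Lemma \ref{relpara_h-prin} produces the desired regular homotopy, and splicing this with the constant homotopy on $B$ (where $f=g$) yields the \lsh\ from $f$ to $g\natural k$.

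The main obstacle is the verification in the previous paragraph that the $H^1(M,\partial M;\Z_2)$ obstruction vanishes: connect sum with an immersion only modifies $\pi_3$-level data, so this $\Z_2$-obstruction must be shown to be absent intrinsically from the setup provided by Lemma \ref{tomas}. This will likely require tracking, along the lines of the loop computations in the proof of Lemma \ref{tomas}, how the frames $(\partial_\theta,\partial_1,\partial_2)$ of $f$ and $g$ compare along a collection of arcs generating $H_1(M,\partial M;\Z_2)$, and using the matching of framings along $\Sigma$ to conclude triviality.
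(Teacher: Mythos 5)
Your skeleton is the same as the paper's (align $f$ and $g$ near $\Sigma$ via Lemma \ref{tomas}, compare the complements with the relative h-principle of Lemma \ref{relpara_h-prin}, use $\pi_2(GL^+(4))=0$, and cancel the $\pi_3$-level obstruction by connected sum with an immersion realizing a prescribed Smale invariant), and your treatment of the loop-type obstructions is sound: since $\pi_1(GL^+(4))=\Z_2$ is abelian, $[F\circ\mu]$ and $[G\circ\mu]$ for a \emph{loop} $\mu$ depend only on $\hv\mu\hh\in H_1(\sthree\setminus\Sigma;\mathbb{Z}_2)$, which is generated by meridians lying in $U$ where $F=G$. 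But the step you flag as the main obstacle is a genuine gap, and the argument you sketch for it would fail. When $\Sigma$ has $d+1$ components, $H^1(M,\partial M;\mathbb{Z}_2)\cong H_2(M;\mathbb{Z}_2)\cong\mathbb{Z}_2^{\,d}$ is detected by \emph{arcs} joining distinct components of $\partial M$, and the $\mathbb{Z}_2$-value of $\delta$ on such an arc is a relative (path) class, not a loop class: it is \emph{not} determined by the homology classes of meridians and longitudes in $H_1(\sthree\setminus\Sigma;\mathbb{Z}_2)$, nor by the matching of framings along $\Sigma$ carried out in Lemma \ref{tomas}. Indeed it can be nonzero for the data produced by Lemma \ref{tomas}: for example, postcomposing $g$ with an ambient isotopy of $\rfour$ that is the identity near $g(W)$ but performs a full rotation of two normal coordinates along a region separating two components of the image gives a map agreeing with $g$ near $\Sigma$ for which $\delta$ restricted to a connecting arc is the nontrivial loop in $GL^+(4)$. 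So no intrinsic vanishing argument along your lines can work; with the identification fixed, the obstruction simply is what it is.

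The missing idea is that you are allowed to \emph{change $f$ by a further \lsh} to kill these $d$ arc obstructions, rather than prove they vanish. This is what the paper does: after positioning the images of the components $\tilde W_j$ along the first coordinate axis and arranging the arcs $f(I_j)$ to run along that axis, it postcomposes $f$, for each $j$ with $[F\circ\gamma_j]\ne[G\circ\gamma_j]$, with the time-one map of an ambient isotopy $\psi^j_t$ of $\rfour$ (a fiberwise $2\pi$-twist of two normal coordinates, interpolated along the $x$-axis) which fixes a neighborhood of $f(W)$ and hence preserves the agreement with $g$ near $\Sigma$, but changes the relative class of $F\circ\gamma_j$ by the nontrivial element of $\pi_1(GL^+(4))$. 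After these modifications the $H^1$ obstruction vanishes, and the rest of your argument (extension over $2$-handles by $\pi_2=0$, cancellation of the $3$-handle obstruction by choosing $k$ with $\Omega_{4,3}(k)$ the negative of the class of $A_2|_S$, then Lemma \ref{relpara_h-prin}) goes through as in the paper. Note also that Lemma \ref{tomas} produces the agreement $f|_U=g|_U$ only after replacing $f$ by an \lshc\ map, so further such replacements are entirely in the spirit of the proof; without this extra freedom your obstruction-theoretic setup cannot be completed.
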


\begin{proof}
By Lemma \ref{tomas} we may assume that $f$ and $g$ have a common
singularity link $\Sigma=\Sigma(f)=\Sigma(g)$, and that
$f|_U=g|_U$ for an open neighborhood $U$ of $\Sigma$.

Let $W\subset U$ be a tubular neighborhood of $\Sigma$, let
$B$ be an open proper disk subbundle of $W$, let
$M=\sthree\setminus B$, and let $\partial M\subset M_0\subset
M_1\subset M_2\subset M_3=M$ be a handle decomposition of
$(M,\partial M)$ such that $M_0=W\setminus B$ and $M_3$ is
obtained from $M_2$ by the attachment of one 3-handle. Let
$h_t:U\to\rfour$ be the constant homotopy $h_t=f|_U=g|_U$. Define
$F ,G:\sthree\setminus\Sigma\to GL^+(4)$ by $F =\Psi\circ df\circ
Q$ and $G=\Psi\circ dg\circ Q$, define $H:(M_0\setminus\Sigma)\times
I\to GL^+(4)$ by $H(p,t)=(\Psi\circ dh_t\circ Q)(p)$, and define
$A_0:M\times\{0\}\cup M_0\times I\cup M\times \{1\}\to GL^+(4)$ by
$A_0=F \cup H\cup G$.

Assume, to begin with, that $A_0$ extends to a continuous map
\begin{equation*}
A_2:M\times\{0\}\cup M_2\times I\cup M\times \{1\}\to GL^+(4).
\end{equation*}
Let $D$ be the 3-handle attached to $M_2$ to obtain $M$, let $S$
be the topological 3-sphere $D\times\{0\}\cup\partial D\times
I\cup D\times \{1\}$, and consider the restriction $A_2|_S:S\to
GL^+(4)$. Suppose the homotopy class of
$A_2|_S$ equals $mi_\ast[\sigma]+ni_\ast[\rho]$, where $i:SO(4)\to
GL^+(4)$ is the inclusion and $[\sigma]$ and $[\rho]$ are the
generators for $\pi_3(SO(4))$ defined in Subsection
\ref{immersions}. Let $k:\sthree\to\rfour$ be an immersion with
Smale invariant $\Omega_{4,3}(k)=-mi_\ast[\sigma]-ni_\ast[\rho]$,
and let $g\natural k$ be a connected sum of $g$ and $k$ that
agrees with $g$ on $M_2$. Then, according to Kervaire \cite{Ker},
if we replace $g$ by $g\natural k$ (and let $A_2$ be defined by
$A_2(p,1)=(\Psi\circ d(g\natural k)\circ Q)(p)$ on
$D\times\{1\}$), the homotopy class of $A_2|_S$ is trivial, and
hence $A_2$ extends to a continuous map $M\times I\to GL^+(4)$.
This, in turn, implies that $f$ and $g\natural k$ are \lshc\ by
Lemma \ref{relpara_h-prin}. Thus, the proposition follows if we can
arrange it so that $A_0$ extends to $M\times\{0\}\cup M_2\times
I\cup M\times \{1\}$.

Let $W_0,\dots, W_d$ be the components of $W$. Assume $W$ is so
small that the sets $\tilde W_0,\dots,\tilde W_d$ defined by 
$\tilde W_j=f(W_j)=g(W_j)$ are pairwise disjoint.
Postcompose $f$ and $g$ by the time-one map of an ambient isotopy
of $\rfour$ that moves the images of $f$ and $g$ so that each
$\tilde W_j$ is contained in the closed ball in $\rfour$ with
radius $\frac{1}{2}$ centered at $(4j+\frac{5}{2},0,0,0)$, so that
$a_j'=(4j-1,0,0,0)\in\tilde W_{j-1}$ and $b_j'=(4j+2,0,0,0)\in\tilde
W_{j}$ for $j=1,\dots,d$, and so that small open subarcs of the first coordinate
axis in $\rfour$ centered at the $a'_j$:s and $b'_j$:s are
contained in $f(U)=g(U)$. Let $B_0,\dots,B_d$ be the
components of $B$, with $B_j\subset W_j$, let $V$ be a disk in the
topological boundary of $M_0$ in $M$, and let $b\in V$.

Let $H_1,\dots,H_n$ be the 1-handles attached to $\partial M_0$ to
obtain $M_1$. Each handle $H_j$ has a core segment $I_j$ with
endpoints $a_j$ and $b_j$ on the topological boundary of $M_0$ in
$M$. We may assume, possibly after a handle slide, that $a_j\in
W_{j-1}$, $b_j\in W_{j}$, $f(a_j)=g(a_j)=a_j'$, and
$f(b_j)=g(b_j)=b_j'$ for $j=1,\dots,d$, and that $a_j,b_j\in V$
for $j=d+1,\dots,n$. Moreover we may assume that the segments
$I_j$ are such that $f(I_1),\dots,f(I_{d})$ is a sequence of
pairwise disjoint embedded curves in $\rfour$, that each $f(I_j)$
intersects $\tilde W$ only at $a_j'$ and $b_j'$, and that $f(I_j)$
is contained in the first coordinate axis close to $a'_j$ and
$b'_j$. Now postcompose $f$ with the time-one map of an ambient
isotopy of $\rfour$ that keeps an open neighborhood of $f(W)$
fixed, and for $j=1,\dots,d$ moves the curve $f(I_j)$ onto
$\{(x,0,0,0):4j-1\leq x\leq 4j+2\}$.

For $j=1,\dots,d$ let $\gamma_j:I\to I_j$ be simple paths such
that $\gamma_j(0)=a_j$ and $\gamma_j(1)=b_j$, and define loops
$u_j:\sone\to I_j\times \{0\}\cup M_0\times I\cup I_j\times\{1\}$
by
\begin{equation*}
u_j(\theta) = \begin{cases}
              (a_j,\frac{2}{\pi}\theta)       & \text{if}\quad 0\leq\theta\leq \frac{\pi}{2},\\
              (\gamma_j(\frac{2}{\pi}\theta-1),1)     & \text{if}\quad\frac{\pi}{2}\leq\theta\leq \pi,\\
              (b_j,3-\frac{2}{\pi}\theta)         & \text{if}\quad\pi\leq\theta\leq \frac{3\pi}{2},\\
              (\gamma_j(4-\frac{2}{\pi}\theta),0)     & \text{if}\quad\frac{3\pi}{2}\leq\theta\leq 2\pi.
              \end{cases}
\end{equation*}
For $j=d+1,\dots,n$ let $\mu_j:\sone\to V\cup I_j$ be simple loops
such that $\mu_j(0)=b$, and define loops $u_j:\sone\to I_j\times
\{0\}\cup V\times I\cup I_j\times\{1\}$ by
\begin{equation*}
u_j(\theta) =  \begin{cases}
               (b,\frac{2}{\pi}\theta)     & \text{if}\quad 0\leq\theta\leq \frac{\pi}{2},\\
               (\mu_j(4\theta-2\pi),1)     & \text{if}\quad \frac{\pi}{2}\leq\theta\leq \pi,\\
               (b,3-\frac{2}{\pi}\theta)   & \text{if}\quad \pi\leq\theta\leq \frac{3\pi}{2},\\
               (\mu_j(8\pi-4\theta),0)     & \text{if}\quad \frac{3\pi}{2}\leq\theta\leq 2\pi.
               \end{cases}          
\end{equation*}

For $j=1,\dots,d$ let
$\lambda_j:\R\to\R$ be a nondecreasing smooth function such that
$\lambda_j(x)=0$ for $x<4j$ and $\lambda_j(x)=1$ for
$x>4j+1$. Define the isotopy $\psi^j_t:\rfour\to\rfour$
by
\begin{equation*}
\psi^j_t(x,y,z,w)=(x,y\cos(2\pi t\lambda(x))-z\sin(2\pi
t\lambda(x)),y\sin(2\pi t\lambda(x))+z\cos(2\pi t\lambda(x)),w).
\end{equation*}
Deform $Q$ so that for $j=1,\dots,d$ we have $F(a_j)=G(a_j)=G(b_j)=F (b_j)$, 
where $F $ and $G$ are as defined
above. Then each pair $F \circ \gamma_j$ and $G\circ\gamma_j$ is a
pair of loops in $GL^+(4)$. Inductively, starting with $j=d$ and
counting backwards, deform $f$ using $\psi_t^j\circ f$ into
$\psi_1^j\circ f$ if $F \circ \gamma_j$ and $G\circ\gamma_j$ are
\emph{not} homotopic. Otherwise leave $f$ unchanged. The resulting
map, which we also denote by $f$, is such that $F \circ \gamma_j$
and $G\circ\gamma_j$ \emph{are} homotopic for $j=1,\dots,d$.

Recall that $A_0:M\times\{0\}\cup M_0\times I\cup M\times \{1\}\to
GL^+(4)$ was defined as $A_0=F \cup H\cup G$. The map $A_0$
extends continuously to $M\times\{0\}\cup M_1\times I\cup M\times
\{1\}$ if and only if $(A_0)_\ast\hv u_j\hh=0$ for $j=1,\dots,n$.
Here $(A_0)_\ast$ is the homomorphism
\begin{equation*}
(A_0)_\ast:H_1(M\times\{0\}\cup M_0\times I\cup M\times
\{1\};\Z_2)\to \pi_1(GL^+(4)),
\end{equation*}
induced by $A_0$, where we have identified $\pi_1(GL^+(4))$ with $H_1(GL^+(4);\Z_2)$
via the Hurewicz isomorphism. Now $(A_0)_\ast$ satisfies
\begin{equation*}
(A_0)_\ast\hv u_j\hh=\begin{cases}
                     [F \circ\gamma_j]-[G\circ\gamma_j]   & \text{for}\quad j=1,\dots,d,\\
                     [F \circ\mu_j]-[G\circ\mu_j]     & \text{for}\quad j=d+1,\dots,n.
                     \end{cases}
\end{equation*}
But $[F \circ\gamma_j]=[G\circ\gamma_j]$ by construction, and the
homotopy classes $[F \circ\mu_j]$ and $[G\circ\mu_j]$ depend
only on the homology class $\hv \mu_j\hh$, which we proved in Proposition
\ref{aina}. Hence $[F \circ\mu_j]=[G\circ\mu_j]$, and $A_0$
extends to a continuous map
\begin{equation*}
A_1:M\times\{0\}\cup M_1\times I\cup M\times \{1\}\to GL^+(4),
\end{equation*}
Finally $A_1$ extends to $M\times\{0\}\cup M_2\times I\cup
M\times\{1\}$ because $\pi_2(GL^+(4))=0$.
\end{proof}

\section{Proofs of Theorems \ref{ett} and \ref{tva}}
\label{applications}
In this section we prove Theorems \ref{ett} and \ref{tva}.

\begin{proof}[Proof of Theorem \ref{ett}]
The implication from left to right follows from Propositions
\ref{karlolov}, \ref{knocke}, and \ref{jojje}. To prove the
opposite implication let $f,g:\sthree\to\rfour$ be locally stable
maps, and assume that $\sigma(f)=\sigma(g)$, $\nu(f)=\nu(g)$, and
$\kappa(f)=\kappa(g)$. By Proposition \ref{f_lshc_gsumk} there
exists an immersion $h:\sthree\to\rfour$ such that $f$ is \lshc\
to $g\natural h$. The equations
\begin{align*} 
&\nu(h) = \nu(g)+\nu(h)-1-\nu(g)+1=\nu(f)-\nu(g)+1=1\\
&\kappa(h) = \kappa(g)+\kappa(h)-\kappa(g)=\kappa(f)-\kappa(g)=0
\end{align*}
together with Proposition \ref{nu_kappa_det_imm} imply that $h$ is
regularly homotopic to the standard embedding $\sthree\to\rfour$.
Hence $f$ and $g$ are \lshc.
\end{proof}

\begin{proposition}
\label{immpar}
If $f:\sthree\to\rfour$ is an immersion, then the integers
$\nu(f)$ and $\frac{1}{2}\kappa(f)$ have different parity.
Conversely, for each pair $a,b\in\Z$ such that $a$ and $b$ have
different parity there exists an immersion $f:\sthree\to\rfour$
with $\nu(f)=a$ and $\kappa(f)=2b$.
\end{proposition}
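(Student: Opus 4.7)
The plan is to translate everything into the language of the Smale invariant, using the three lemmas of Subsection \ref{immersions} and the description of the kernel of $j_\ast$.

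First I would write $\Omega_{4,3}(f)=m[\sigma]+n[\rho]$ for uniquely determined integers $m,n$. Lemma \ref{immnormal} immediately gives $\nu(f)=m+1$. Next, Lemma \ref{omega} gives
\begin{equation*}
\Omega_{5,3}(i\circ f)=j_\ast(\Omega_{4,3}(f))=mj_\ast[\sigma]+nj_\ast[\rho].
\end{equation*}
Since the kernel of $j_\ast$ is generated by $2[\sigma]-[\rho]$, we have $j_\ast[\rho]=2j_\ast[\sigma]$, and hence $\Omega_{5,3}(i\circ f)=(m+2n)j_\ast[\sigma]$. Because $j_\ast[\sigma]$ generates the infinite cyclic group $\pi_3(SO(5))$, Lemma \ref{immcusp} yields $\kappa(f)=2(m+2n)$, so $\tfrac{1}{2}\kappa(f)=m+2n$. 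Therefore
\begin{equation*}
\nu(f)-\tfrac{1}{2}\kappa(f)=(m+1)-(m+2n)=1-2n
\end{equation*}
is odd, which proves the first assertion.

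For the converse, given integers $a,b$ of different parity, set $m=a-1$ and $n=\tfrac{1}{2}(b-a+1)$. The integer $n$ is well defined because $a$ and $b$ have different parity, so $b-a+1$ is even. By Smale's theorem, there exists an immersion $f\colon\sthree\to\rfour$ with $\Omega_{4,3}(f)=m[\sigma]+n[\rho]$. Running the computation above in reverse yields $\nu(f)=m+1=a$ and $\kappa(f)=2(m+2n)=2b$, as required.

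The only potential subtlety is keeping track of the identification $\pi_3(SO(5))\cong\Z$ via the generator $j_\ast[\sigma]$; once this is in place the proof reduces to arithmetic in $\pi_3(SO(4))=\Z\oplus\Z$ modulo the relation $j_\ast[\rho]=2j_\ast[\sigma]$, and there is no further obstacle.
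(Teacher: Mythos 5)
Your proof is correct and is exactly the argument the paper intends: its own proof simply states that the proposition follows from Lemmas \ref{immnormal}, \ref{omega}, and \ref{immcusp}, and your computation (using $j_\ast[\rho]=2j_\ast[\sigma]$ and that $j_\ast[\sigma]$ generates $\pi_3(SO(5))\cong\Z$, plus Smale's realization theorem for the converse) is the straightforward way to carry that out.
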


\begin{proof}
The proposition follows from Lemmas \ref{immnormal}, \ref{omega},
and \ref{immcusp}.
\end{proof}

\begin{proof}[Proof of Theorem \ref{tva}]
By Proposition \ref{aina} there exists a locally stable map
$f:\sthree\to\rfour$ such that $\sigma(f)=\sigma$. Let
$g:\sthree\to \rfour$ be any locally stable map such that
$\sigma(g)=\sigma$. Then, by Proposition \ref{f_lshc_gsumk}, there
exists an immersion $h:\sthree\to\rfour$ such that $g$ is \lshc\
to $f\natural h$. Let $a=\nu(h)-1$ and
$b=\frac{1}{4}(\kappa(h)-2a)$. Then $b$ is an integer by
Proposition \ref{immpar}, and $\nu(g)=\nu(f)+a$ and
$\kappa(g)=\kappa(f)+2a+4b$ by Propositions \ref{lsnu} and
\ref{lskappa}. This proves the first and second assertion.

To prove the last assertion let $h:\sthree\to\rfour$ be an
immersion with Smale invariant $a[\sigma]+b[\rho]$. Then
$g=f\natural h$ is a locally stable map that satisfies
$\sigma(g)=\sigma$, $\nu(g)=\nu(f)+a$, and
$\kappa(g)=\kappa(f)+2a+4b$.
\end{proof}

\end{document}